\numberwithin{equation}{section}
\newtheorem{theorem}{Theorem}[section]
\newtheorem{corollary}[theorem]{Corollary}
\newtheorem{lemma}[theorem]{Lemma}
\newtheorem{proposition}[theorem]{Proposition}
\theoremstyle{definition}
\newtheorem{remark}[theorem]{Remark}
\theoremstyle{definition}
\theoremstyle{definition}
\newtheorem{assumption}[theorem]{Assumption}
\newcommand\dashint{\operatorname%
{\,\,\text{\bf--}\kern-.98em\DOTSI\intop\ilimits@\!\!}}
\newcommand{\osc}{\mathop{\hbox{osc}}}
\newcommand\vu{\textit{\textbf{u}}}
\newcommand\vf{\textit{\textbf{f}}}
\newcommand\vg{\textit{\textbf{g}}}
\newcommand\bR{\mathbb{R}}
\newcommand\bH{\mathbb{H}}
\newcommand\bM{\mathbb{M}}
\newcommand\cD{\mathcal{D}}
\newcommand\cH{\mathcal{H}}
\newcommand\cP{\mathcal{P}}
\newcommand\cQ{\mathcal{Q}}
\newcommand\cL{\mathcal{L}}
\newcommand{\set}[1]{\left\{#1\right\}}
\newcommand{\Div}{\operatorname{div}}
\begin{document}

\title[Solvability of second-order equations]{Solvability of second-order equations with hierarchically partially BMO coefficients}

\author[H. Dong]{Hongjie Dong}
\address[H. Dong]{Division of Applied Mathematics, Brown University,
182 George Street, Providence, RI 02912, USA}
\email{Hongjie\_Dong@brown.edu}
\thanks{H. Dong was partially supported by  NSF grant number DMS-0635607 from IAS, and NSF grant number DMS-0800129.}

\subjclass[2010]{35J15, 35K15, 35R05}

\keywords{Second-order equations, vanishing mean oscillation, partially BMO coefficients, Sobolev spaces}

\begin{abstract}
By using some recent results for divergence form equations obtained in \cite{Dong08,DongKim08b},
we study the $L_p$-solvability of second-order elliptic and parabolic equations in nondivergence form for any $p\in (1,\infty)$. The leading coefficients are assumed to be in locally BMO spaces with suitably small BMO seminorms. We not only extend several previous results by Krylov and Kim \cite{Kim07}-\cite{KK2} to the full range of $p$, but also deal with equations with more general coefficients.
\end{abstract}

\maketitle

\section{Introduction}

In this paper, we study the $W^{1,2}_{p}$-solvability of parabolic equations in nondivergence form:
\begin{equation}
                                                \label{parabolic}
P u-\lambda u=f,
\end{equation}
where $\lambda\ge 0$ is a constant, $f\in L_p$, and
\begin{equation*}
P u=-u_t+a^{jk}D_{jk}u+b^{j}D_ju+cu.
\end{equation*}
We assume that
all the coefficients are bounded and measurable, and $a^{jk}$ are uniformly elliptic, i.e., for some $K>0$ and $\delta\in (0,1]$,
$$
|b^j|+|c|\le  K,\quad |a^{jk}|\le \delta,
\quad\delta|\xi|^{2}\leq
a^{jk}\xi^{j}\xi^{k}\leq\delta^{-1}|\xi|^{2}.
$$
If all the coefficients are time-independent, we also consider
the
$W^{2}_{p}$-solvability of elliptic equations in nondivergence
form:
\begin{equation}
                                                \label{elliptic}
L u-\lambda u=f,
\end{equation}
where
$$ L u=a^{jk}D_{jk}u+b^{j}D_ju+cu.
$$
The main purpose of this paper is to show how the recent
results obtained in \cite{Dong08,DongKim08b} for {\em divergence form} equations can
be used to i) overcome the
restriction $p>2$ in the $L_p$-theory of {\em nondivergence form}
equations with partially BMO coefficients
developed by Krylov and Kim in \cite{Kim07}-\cite{KK2}; ii) extend the results to systems as well as equations with a more general class of coefficients.

There is a vast literature of the $L_p$ theory of second-order parabolic and elliptic equations with discontinuous coefficients. It is of particular interest because of its various important applications in nonlinear equations and its subtle links with the theory of stochastic processes. For instance, one implication of the $L_p$-solvability of parabolic equations in nondivergence form is the weak uniqueness of solutions to associated It\^o equations.

For equations with uniformly continuous leading coefficients, the solvability is classical. The $L_p$ theory of second-order
equations with discontinuous coefficients was studied
extensively in the last three decades. One important class of
discontinuous coefficients contains functions with vanishing
mean oscillation (VMO), the study of which was initiated in
\cite{CFL1} about twenty years ago and continued in \cite{CFL2} and \cite{BC93};
see also \cite{MaPaSo00,PS2,HHH} and the references in \cite{Krylov_2007_mixed_VMO}.
The proofs in \cite{CFL1,CFL2,BC93} are based on the Calder\'on--Zygmund estimate and the Coifman--Rochberg--Weiss commutator estimate. Before that, the $L_p$ theory had been established for some other types of discontinuous coefficients; see, for
instance, \cite{Lo72a} and \cite{Chi}.

In \cite{Krylov_2005}, Krylov gave a unified
approach to investigating the $L_p$-solvability of both
divergence and nondivergence form parabolic and elliptic equations with $a^{ij}$ VMO in the spatial
variables (and measurable in the time variable in the parabolic case).
The proofs in
\cite{Krylov_2005} rely mainly on pointwise estimates of sharp
functions of spatial derivatives of solutions together with the Stein--Fefferman theorem and the Hardy--Littlewood theorem. By doing this, VMO coefficients are treated in a rather straightforward
manner. This result was later improved and generalized in a
series of papers \cite{Dong08}-\cite{DongKrylov}, \cite{Kim07}-\cite{KK2}, and \cite{Krylov_2007_mixed_VMO}-\cite{Krylov08}\footnote{Although the results in some of these papers are claimed for equations with VMO coefficients, the proofs there only require $a^{ij}$ to have locally small mean oscillations.}. For other results about equations and systems with VMO/BMO coefficients, we refer the reader to \cite{AM07,ByunWang08} and references therein.

In contrast, the theory of elliptic and parabolic equations with partially
VMO coefficients is quite new, and was originated in \cite{KimKrylov07}. In \cite{KimKrylov07} the $W^2_p$-solvability for any $p> 2$ was established for
nondivergence form elliptic equations with leading coefficients
measurable in one variable and VMO in the others. This result
was extended in \cite{KK2} to parabolic equations. For nondivergence form parabolic equations, further extensions were made later in
\cite{Kim07,Kim07a,Kim07b}, in which most leading coefficients are
measurable in the time variable as well as one spatial
variable, and VMO, or BMO with small seminorms, in the other variables. In all these papers, except for some special cases, it is always assumed that $p>2$. The corresponding results for divergence form equations were proved more recently in \cite{Dong08,DongKim08a,DongKim08b} by using the idea of ``breaking the symmetry'' of the coordinates and applying a generalized Stein--Fefferman theorem proved in \cite{Krylov08}. In these papers, the solvability is obtained for any $p\in (1,\infty)$ thanks to a standard duality argument for divergence form equations.

Roughly speaking, the restriction $p>2$ mentioned above is due to the following reason. In \cite{KimKrylov07}, for instance, a sharp function estimate is deduced from the $W^2_2$-solvability of equations with $a^{ij}$ depending only on $x^1$, which is obtained by using the method of Fourier transforms. In turn, the right-hand side of the estimate contains maximal functions of $q$-th power of $D^2 u$ for some $q>2$, which can be made arbitrarily close to $2$. Therefore, to apply the Fefferman--Stein theorem and the Hardy--Littlewood maximal function theorem one requires $p\ge q>2$.

Thus, a natural question is:
\begin{quote}
{\em Do we still have the $L_p$-solvability for nondivergence equations with $a^{ij}$
in the class of \cite{Kim07}-\cite{KimKrylov07} when $p$ gets below $2$?}
\end{quote}
In other word, we would like to understand whether the condition $p>2$ in these papers is necessary, or merely due to technical reasons in the method used. This is one of the motivations of this article.
A partial answer to this question can be found in Kunstmann \cite{kunst}, in which the author extended the result in \cite{KK2} from $p\in (2,\infty)$ to $p\in (p_0,\infty)$ for some $p_0\in (1,2)$ in the special case that $a^{ij}$ are time-independent.

In this article, we shall give an affirmative answer to this question in full.
One of our main results is the $W^{1,2}_p$-solvability of nondivergence form parabolic equations for any $p\in (1,\infty)$ (stated in Theorems \ref{thm2} and \ref{thm3}), under the assumptions that $a^{ij}$ are partially BMO. More precisely, $a^{ij}$ are assumed to be measurable in
one spatial direction and the time variable, and have locally small mean oscillations in the other variables; additionally, we assume $a^{11}$ to be measurable in one spatial direction (or the time variable) and have locally small mean oscillations in the others; see Section \ref{mainsec} for the definitions. Consequently, we derive the $W^2_p$-solvability of nondivergence elliptic equations for any $p\in (1,\infty)$, when $a^{ij}$ are in the class of \cite{KimKrylov07}. We also obtain the solvability for nondivergence form parabolic and elliptic {\em systems} with partially BMO coefficients, which, to the best of our knowledge, is new even when $p>2$; see Remark \ref{rem4.32}. Moreover, the method used in this paper allows us to treat more general coefficients, i.e. ``hierarchically'' partially BMO coefficients; see Section \ref{sec6}.

Now we give a brief description of our methods.
In order to get below $2$, we first establish for any $p\in (1,\infty)$ the $W^{1,2}_p$-solvability of parabolic equations with $a^{11}$ depending only on either $t$ or $x^1$, and $a^{ij},ij>1$ depending only on $(t,x^1)$ as stated in Theorems \ref{thm2.2} and \ref{thm3.2}. To this end, our idea is that in this situation the equation can be rewritten into a divergence form after a suitable change of variables, so that some results recently proved in \cite{Dong08} and \cite{DongKim08b} for divergence form equations can be applied; see Appendix. In some sense, this idea is reminiscent of the classical argument of deriving from the De Giorgi--Moser--Nash estimate the $C^{1,\alpha}$ regularity for nondivergence elliptic equations with {\em measurable} coefficients on the plane; see, for instance, \cite[\S 11.2]{GT98}.
Secondly, we bound $D^2 u$ and $u_t$ by a portion of $D^2u$ and $Pu$.
Finally, we get a sharp function estimate of the portion of $D^2u$ by using the Krylov--Safonov interior H\"older estimate. The above three steps are combined together to prove Theorems \ref{thm2} and \ref{thm3} by applying the Fefferman--Stein theorem and the Hardy--Littlewood maximal function theorem.
Of course, the Krylov--Safonov estimate is not applicable to systems, for which case we use a bootstrap argument to get an interior H\"older estimate.
To deal with equations with hierarchically partially BMO coefficients, in Section \ref{sec6} we first need to establish the corresponding solvability results for divergence form equations with the leading coefficients in the same class. This is achieved by exploiting the aforementioned idea in \cite{Dong08,DongKim08a,DongKim08b} of ``breaking the symmetry'' of the coordinates and running an induction argument on the number of the coordinates.

We note that the $W^{1,2}_p$-solvability results in this paper admit an extension to the mixed norm spaces $W^{1,2}_{q,p}$ by following the idea in \cite{Krylov_2007_mixed_VMO}; see also, for instance,  \cite{Kim07b} and \cite{Dong08}. Here we do not pursue this, and leave it to the interested reader.

A brief outline of the paper: in the next section, we introduce the notation and state some main results, Theorems \ref{thm2}, \ref{thm3} and \ref{thm4}. Section \ref{sec3} and \ref{sec4} are devoted to the proof of Theorems \ref{thm2} and \ref{thm3}. We give a generalization of Theorems \ref{thm2} to systems in Section \ref{sec5}. Finally, in Section \ref{sec6} we discuss equations with hierarchically partially BMO coefficients.


\section{Notation and some main results}		\label{mainsec}

\subsection{Notation}
Let $d\geq 1$ be an integer. A typical point in $\bR^{d}$ is denoted by $x=(x^1,\ldots,x^d)=(x^1,x')$.
We set
$$
D_{i}u=u_{x^i},\quad D_{ij}u=u_{x^ix^j},\quad
D_t u=u_t.
$$
By $Du$ and $D^{2}u$ we
mean the gradient and the Hessian matrix
of $u$. On many occasions we need to take these objects relative to only part of variables. We also use the following notation:
$$
D_{x'}u=u_{x'},\quad
D_{x^1x'}u=u_{x^1x'},\quad
D_{xx'}u=u_{xx'}.
$$
Throughout the paper, we always assume that $1 < p, q < \infty$
unless explicitly specified otherwise.
By $N(d,p,\ldots)$ we mean that $N$ is a constant depending only
on the prescribed quantities $d, p,l,\ldots$.
For a function $f(t,x)$ in $\bR^{d+1}$, we set
\begin{equation*}
(f)_{\cD} = \frac{1}{|\cD|} \int_{\cD} f(t,x) \, dx \, dt
= \dashint_{\cD} f(t,x) \, dx \, dt,
\end{equation*}
where $\cD$ is an open subset in $\bR^{d+1}$ and $|\cD|$ is the
$d+1$-dimensional Lebesgue measure of $\cD$.
For $-\infty\leq S<T\leq \infty$, we denote
\begin{align*}
W_{p}^{1,2}((S,T)\times \bR^d)&=
\set{u:\,u,u_t,Du,D^2u\in L_{p}((S,T)\times \bR^d)},\\
\cH^{1}_{p}((S,T)\times \bR^d)&=(1-\Delta)^{1/2}W_{p}^{1,2}((S,T)\times \bR^d),\\
\bH^{-1}_{p}((S,T)\times \bR^d)&=(1-\Delta)^{1/2}L_{p}((S,T)\times \bR^d).
\end{align*}
We also use the abbreviations $L_p=L_p(\bR^{d+1})$, $\cH^1_p=\cH^1_p(\bR^{d+1})$, etc. For any $T\in (-\infty,\infty]$, we denote
\begin{equation*}
\bR_T=(-\infty,T), \quad \bR_T^{d+1}=\bR_T\times \bR^d.
\end{equation*}

For any integer $k\ge 1$ and $x\in \bR^k$, we denote $B_r^k(x)$ to be the $k$-dimensional cube
$$
\{y\in \bR^k\,|\,\max_i|y^i-x^i|<r\}.
$$
Set
$$
Q_r^k(t,x) = (t-r^2,t) \times B_r^k(x),\quad
B_r^k=B_r^k(0),\quad
Q_r^k = Q_r^k(0,0).
$$
In case $k=d$ or $d-1$, we use the abbreviations
$$
B_r(x)=B_r^d(x),\quad
Q_r(t,x)=Q_r^d(t,x),
$$
$$
B_r'(x')=B_r^{d-1}(x'),\quad
Q_r'(t,x')=Q_r^{d-1}(t,x').
$$
Let
$$
\cQ=\set{Q_r(t,x): (t,x) \in \bR^{d+1}, r \in (0, \infty)}.
$$

For a function $g$ defined on $\bR^{d+1}$,
we denote its (parabolic) maximal and sharp function, respectively, by
\begin{align*}
\bM g (t,x) &= \sup_{Q\in \cQ: (t,x) \in Q}
\dashint_{Q} | g(s,y) | \, dy \, ds,\\
g^{\#}(t,x) &= \sup_{Q\in \cQ:(t,x) \in Q}
\dashint_{Q} | g(s,y) - (g)_Q | \, dy \, ds.
\end{align*}

\subsection{Main results without the hierarchical structure}
To illustrate the main idea, first we deal with the coefficients considered before in \cite{Kim07}-\cite{KK2}. We assume that $a^{ij},ij>1$ are measurable in
$x^1$ and $t$, and have locally small mean oscillations in the other variables. In addition, we assume that $a^{11}$ are measurable in $t$ (or $x^1$) and have locally small mean oscillations in the others. To state the assumptions on $a^{ij}$ precisely, for $R>0$, we denote
\begin{align*}
a^{11,\#}_{R,1}&=\sup_{(t_0,x_0)\in \bR^{d+1}} \sup_{r\le R}\dashint_{Q_r(t_0,x_0)}|a^{11}(t,x)-\bar a^{11}(t)|\,dx\,dt,\\
a^{11,\#}_{R,2}&=\sup_{(t_0,x_0)\in \bR^{d+1}} \sup_{r\le R}\dashint_{Q_r(t_0,x_0)}|a^{11}(t,x)-\hat a^{11}(x^1)|\,dx\,dt,\\
a^{\#}_R&=
\sup_{(t_0,x_0)\in \bR^{d+1}} \sup_{r\le R}\sup_{(i,j)\neq (1,1)}\dashint_{Q_r(t_0,x_0)}|a^{ij}(t,x)-\bar a^{ij}(t,x^1)|\,dx\,dt,
\end{align*}
where for each $Q_r(t_0,x_0)$,
\begin{align*}
\bar a^{11}(t)&=\dashint_{B_r(x_0)}a^{11}(t,x)\,dx,\\
\hat a^{11}(x^1)&=\dashint_{Q_r'(t_0,x_0')}a^{11}(s,x^1,y')\,dy'\,ds,\\
\bar a^{ij}(t,x^1)&=\dashint_{B_r'(x_0')}a^{ij}(t,x^1,y')\,dy',\quad (i,j)\neq (1,1).
\end{align*}
We shall impose part of the following assumptions on the leading coefficients.
\begin{assumption}[$\gamma$] \label{assump2}$\quad$

\begin{enumerate}[i)]
\item There exists a positive constant $R_0$ such that $a^{11,\#}_{R_0,1}\le \gamma$.
\item There exists a positive constant $R_0$ such that $a^{11,\#}_{R_0,2}\le \gamma$.
\item There exists a positive constant $R_0$ such that
$a^{\#}_{R_0}\le \gamma$.
\end{enumerate}
\end{assumption}

Next we state a few results of the paper.

\begin{theorem}
                                         \label{thm2}

For any
$p\in (1,\infty)$, there exists a $\gamma =\gamma(d,\delta,p) >0$
such that under Assumption
\ref{assump2} ($\gamma$) i) and iii) for any $T\in (-\infty,+\infty]$ the following holds.

i) For any $u\in W^{1,2}_p(\bR^{d+1}_T)$,
$$
\lambda\|u\|_{L_{p}(\bR^{d+1}_T)}+\sqrt{\lambda}
\|Du\|_{L_{p}(\bR^{d+1}_T)}+\|D^{2}u\|_{L_{p}(\bR^{d+1}_T)}
+\|u_t\|_{L_{p}(\bR^{d+1}_T)}
$$
\begin{equation}
                                                \label{13.10.21}
\leq N\|Pu-\lambda u\|_{L_{p}(\bR^{d+1}_T)},
\end{equation}
provided that $\lambda\geq \lambda_0$, where $\lambda_0
\geq0$ depends only
 on $d,\delta,p,K$, and $R_0$, and $N$ depends only on $d,\delta$ and $p$.

ii) For any $\lambda> \lambda_0$ and $f\in L_p(\bR^{d+1}_T)$, there exists a unique solution
$u\in W^{1,2}_p(\bR^{d+1}_T)$ of equation \eqref{parabolic} in
$\bR^{d+1}_T$.

iii) In the case that
$a^{11}=a^{11}(t)$, $a^{jk}=a^{jk}(t,x^1),jk>1$ and $b^j\equiv c\equiv 0$, we can take
$\lambda_0=0$ in i) and ii).
\end{theorem}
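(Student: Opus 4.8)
The plan is as follows. Part iii) is precisely Theorem~\ref{thm2.2} (the whole-space statement yields the finite-interval one by the usual causality argument for parabolic equations), so I would concentrate on i) and ii). For ii), once the a priori estimate \eqref{13.10.21} is available for $P$, existence follows by the method of continuity along the family $P_s := sP + (1-s)(\Delta-\partial_t)$, $s\in[0,1]$: the coefficients $sa^{jk}+(1-s)\delta^{jk}$ are uniformly elliptic and still satisfy Assumption~\ref{assump2} i) and iii) with the same $\gamma,R_0$, the lower-order coefficients remain bounded by $K$, the threshold $\lambda_0$ can be chosen uniformly in $s$, and $\Delta-\partial_t-\lambda$ is invertible from $L_p$ onto $W^{1,2}_p$ for $\lambda>0$. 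Thus the whole matter reduces to proving \eqref{13.10.21}, and by approximation and a standard extension in the time variable I may take $u\in C_0^\infty(\bR^{d+1})$ and $T=+\infty$. Throughout set $f:=Pu-\lambda u$ and let $D_{xx'}u$ stand for all second derivatives $D_{ij}u$ with $(i,j)\neq(1,1)$.

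\emph{Step A.} I would first recover $D_{11}u$ and $u_t$ from the rest. Rewrite the equation as $-u_t+a^{11}D_{11}u-\lambda u=g$ with $g:=f-\sum_{(i,j)\neq(1,1)}a^{ij}D_{ij}u-b^jD_ju-cu$, and view it, for each fixed $x'$, as a one-dimensional (in $x^1$) parabolic equation whose single coefficient $a^{11}$ is measurable in $t$ and, by Assumption~\ref{assump2} i), of small mean oscillation in $x^1$. The one-variable $L_p$-estimate holds for every $p\in(1,\infty)$, and raising it to the $p$-th power and integrating in $x'$ (Fubini) gives, for $\lambda$ bounded below,
\[
\|D^2u\|_p+\|u_t\|_p+\sqrt\lambda\,\|Du\|_p+\lambda\|u\|_p\le N\|g\|_p+N\|u\|_p .
\]
Absorbing the lower-order part of $g$ once $\lambda$ is large, it then suffices to bound $\|D_{xx'}u\|_p$ by $\varepsilon\|D^2u\|_p+N\|f\|_p$ plus lower-order terms, with $\varepsilon$ to be chosen.

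\emph{Step B.} To estimate $\|D_{xx'}u\|_p$ I would prove a mean-oscillation estimate and then apply the Fefferman--Stein and Hardy--Littlewood theorems. Fix $z_0=(t_0,x_0)$, $\kappa\ge4$, and $r$ with $\kappa r\le R_0$. Freeze the leading coefficients by partial averaging over $Q_{\kappa r}(z_0)$, namely $\bar a^{11}=\bar a^{11}(t)$ (average in $x$) and $\bar a^{ij}=\bar a^{ij}(t,x^1)$ for $(i,j)\neq(1,1)$ (average in $x'$), and set $P_0:=-\partial_t+\bar a^{11}D_{11}+\sum_{(i,j)\neq(1,1)}\bar a^{ij}D_{ij}$, an operator of the type treated in Theorem~\ref{thm2.2}. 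Split $u=w+v$ on $Q_{\kappa r}(z_0)$, where $w$ solves $P_0w-\lambda w=\chi_{Q_{\kappa r}(z_0)}(f+(P_0-P)u)$ on $\bR^{d+1}$ (solvable by Theorem~\ref{thm2.2}), so that $P_0v-\lambda v=0$ in $Q_{\kappa r}(z_0)$. For $w$, the $L_{q_0}$-version of Theorem~\ref{thm2.2} with a fixed $q_0\in(1,p)$, together with Hölder's inequality and the smallness ($\le\gamma$) of the mean oscillations of $a^{11}-\bar a^{11}$ and $a^{ij}-\bar a^{ij}$ (Assumption~\ref{assump2} i) and iii)), gives
\[
\Bigl(\dashint_{Q_{\kappa r}(z_0)}|D^2w|^{q_0}\Bigr)^{1/q_0}\le N\gamma^{\beta}\bigl(\bM(|D^2u|^{q_1})(z_0)\bigr)^{1/q_1}+N\bigl(\bM(|f|^{q_0})(z_0)\bigr)^{1/q_0}+(\text{l.o.t.})
\]
for some $q_1\in(1,p)$ and $\beta>0$. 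For $v$, since the frozen coefficients do not depend on $x'=(x^2,\dots,x^d)$, the tangential derivatives $D_{x'}v$ again solve $P_0(\cdot)-\lambda(\cdot)=0$, so the Krylov--Safonov interior Hölder estimate applies to them; recovering the remaining derivatives $D_{1k}v$, $D_{11}v$, and $v_t$ from the equation as in Step A yields the decay
\[
\dashint_{Q_r(z_0)}\bigl|D_{xx'}v-(D_{xx'}v)_{Q_r(z_0)}\bigr|\le N\kappa^{-\alpha}\Bigl(\dashint_{Q_{\kappa r}(z_0)}|D^2v|^{q_0}\Bigr)^{1/q_0}
\]
for some $\alpha=\alpha(d,\delta)>0$. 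Adding the two bounds, taking the supremum over cylinders through $z_0$ (cylinders with $\kappa r>R_0$ being treated by a crude bound that only affects $\lambda_0$), and using Fefferman--Stein together with Hardy--Littlewood (legitimate since $q_1<p$) give
\[
\|D_{xx'}u\|_p\le N\bigl(\kappa^{-\alpha}+\kappa^{\theta}\gamma^{\beta}\bigr)\|D^2u\|_p+N_\kappa\|f\|_p+(\text{l.o.t.}).
\]
Choosing $\kappa$ large, then $\gamma$ small, and combining with Step A absorbs $\|D^2u\|_p$; the remaining lower-order terms are absorbed into the left-hand side for $\lambda\ge\lambda_0$ via interpolation and the $\sqrt\lambda$ and $\lambda$ weights. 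This proves \eqref{13.10.21}.

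The main obstacle will be the oscillation estimate for the homogeneous model solution $v$ in Step B: one must identify precisely which components of $D_{xx'}v$ again satisfy the model equation (and so are Hölder by Krylov--Safonov), express the mixed derivatives $D_{1k}v$ and $v_t$ through the equation, and carry this out while controlling supports and the explicit dependence on $\kappa$; the rest — the Fefferman--Stein machinery, the $\lambda$-weights, the reduction to $\bR^{d+1}_T$, and the method of continuity — is routine. The other key input, the model solvability of Theorem~\ref{thm2.2}, is established separately (in the Appendix) by rewriting the equation in divergence form and invoking the results of \cite{Dong08,DongKim08b}.
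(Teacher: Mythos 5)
Your overall architecture (model solvability of Theorem \ref{thm2.2} via the divergence-form rewriting, a split into a model solution plus a remainder, Krylov--Safonov for tangential derivatives, Fefferman--Stein and Hardy--Littlewood, then large $\kappa$/small $\gamma$, continuity method, and causality in $t$ for finite $T$) matches the paper, but two of your key steps have genuine gaps. First, Step A: you freeze $x'$ and invoke a one-dimensional $L_p$-estimate in $(t,x^1)$, claiming that ``by Assumption \ref{assump2} i)'' the slice coefficient $a^{11}(\cdot,\cdot,x')$ has small mean oscillation in $x^1$. This does not follow: Assumption \ref{assump2} i) controls averages of $|a^{11}-\bar a^{11}(t)|$ over full $(d+1)$-dimensional cylinders, which gives smallness only \emph{on average} in $x'$, not for (almost) every fixed slice, and the constants in your 1-D estimate would depend on the slice BMO modulus that you cannot control. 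The paper avoids slicing altogether: in Lemma \ref{lem2.1} it works with the $(d+1)$-dimensional auxiliary operator $-\partial_t+a^{11}D_1^2+\Delta_{d-1}$, whose coefficient is measurable in $t$ and has small BMO in all of $x$ jointly (exactly Assumption \ref{assump2} i)), and applies Corollary 3.7 of \cite{Krylov_2005}.

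Second, and more seriously, Step B requires an oscillation-decay estimate for \emph{all} of $D_{xx'}v$, including the mixed derivatives $D_{1k}v$, and you propose to ``recover'' $D_{1k}v$, $D_{11}v$, $v_t$ from the equation. The equation yields only the single scalar combination $a^{11}D_{11}v+\sum_{ij>1}a^{ij}D_{ij}v-v_t$, so the individual mixed derivatives cannot be recovered pointwise, and Krylov--Safonov does not apply to them since they are not purely tangential derivatives of a solution of the model equation; you yourself flag this as ``the main obstacle,'' and as stated it remains unresolved. The paper's way around it is the missing idea in your proposal: the sharp-function machinery is run only for the tangential block $D_{x'}^2u$ (Lemma \ref{lemma 9.10.1}, Theorems \ref{thm12.02} and \ref{thm4.1}), while $D_{x^1x'}u$ is absorbed via the interpolation inequality \eqref{3.57}, $\|D_{x^1x'}u\|_{L_p}\le \varepsilon\|D_1^2u\|_{L_p}+N\varepsilon^{-1}\|D_{x'}^2u\|_{L_p}$, inside Lemma \ref{lem2.1}; combining this with the $D_{x'}^2$ sharp-function bound gives Lemma \ref{lem4.2}. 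A related, smaller gap: your treatment of cylinders with $\kappa r>R_0$ by ``a crude bound that only affects $\lambda_0$'' does not work, because at those scales the coefficient-freezing error is $O(1)$ and multiplies $\|D^2u\|_{L_p}$, which no choice of $\lambda$ can absorb; the paper instead proves the pointwise estimates for $u$ vanishing outside $Q_{R_0}$ and then assembles the global estimate by a partition of unity together with Agmon's idea, which is where $\lambda_0$ and the lower-order terms are actually handled.
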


In Section \ref{sec5}, we will extend Theorem \ref{thm2} to parabolic systems under the same regularity assumption on the leading coefficients.

\begin{theorem}
                                         \label{thm3}

For any
$p\in (1,\infty)$, there exists a $\gamma =\gamma(d,\delta,p) >0$
such that under Assumption
\ref{assump2} ($\gamma$) ii) and iii) for any $T\in (-\infty,+\infty]$ the following holds.

i) For any $u\in W^{1,2}_p(\bR^{d+1}_T)$, we have \eqref{13.10.21}
provided that $\lambda\geq \lambda_0$, where $\lambda_0
\geq0$ depends only  on $d,\delta,p,K$, and $R_0$, and $N$ depends only on $d,\delta$ and $p$.

ii) For any $\lambda> \lambda_0$ and $f\in L_p(\bR^{d+1}_T)$, there exists a unique solution
$u\in W^{1,2}_p(\bR^{d+1}_T)$ of equation \eqref{parabolic} in
$\bR^{d+1}_T$.

iii) In the case that
$a^{11}=a^{11}(x^1)$, $a^{jk}=a^{jk}(t,x^1),jk>1$ and $b^j\equiv c\equiv 0$, we can take
$\lambda_0=0$ in i) and ii).
\end{theorem}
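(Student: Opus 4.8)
The proof parallels that of Theorem~\ref{thm2}, the only change being that the coordinate $x^1$ now plays the role that $t$ plays there in the handling of $a^{11}$. Throughout, the ``model'' equation is the one with $a^{11}=\hat a^{11}(x^1)$, $a^{jk}=\bar a^{jk}(t,x^1)$ for $jk>1$, and $b^j\equiv c\equiv 0$; for it the estimate \eqref{13.10.21} and the existence assertion hold with $\lambda_0=0$ for every $p\in(1,\infty)$. This is Theorem~\ref{thm3.2}, proved in the Appendix by a change of the $x^1$ variable (concretely, $y^1=\int_0^{x^1}(a^{11}(r))^{-1}\,dr$, which casts the leading part in the $x^1$-direction in divergence form) followed by an appeal to the divergence-form results of \cite{Dong08,DongKim08b}. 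In particular, part~iii) is exactly this statement, so it remains to prove parts~i) and~ii) under Assumption~\ref{assump2} ($\gamma$) ii) and iii); and by the method of continuity, joining $P-\lambda$ to the model operator while keeping the a priori estimate uniform, together with the standard reduction from $\bR^{d+1}$ to $\bR^{d+1}_T$, it suffices to prove the a priori estimate \eqref{13.10.21} for $\lambda\ge\lambda_0$.

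First I would record that, for a solution of the model equation,
\begin{equation*}
\|D^2u\|_{L_p}+\|u_t\|_{L_p}\le N\|D_{xx'}u\|_{L_p}+N\|Pu-\lambda u\|_{L_p},
\end{equation*}
where $D_{xx'}u$ denotes the collection of second derivatives having at least one index in $\{2,\dots,d\}$. Indeed, such a $u$ satisfies $-u_t+\hat a^{11}(x^1)D_{11}u-\lambda u=(Pu-\lambda u)-\sum_{(j,k)\neq(1,1)}\bar a^{jk}D_{jk}u$, whose right-hand side has $L_p$ norm at most $\|Pu-\lambda u\|_{L_p}+N\|D_{xx'}u\|_{L_p}$; this equation is one-dimensional in space with a time-independent leading coefficient, hence of divergence form after the change of the $x^1$ variable above, so $\|u_t\|_{L_p}+\|D_{11}u\|_{L_p}$ is bounded by the $L_p$ norm of its right-hand side, uniformly in $\lambda\ge 0$. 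Adding $\|D_{xx'}u\|_{L_p}$ to both sides gives the display.

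The heart of the matter is a mean oscillation estimate for $D_{xx'}u$ for the original equation. Fix $\kappa\ge 2$ and a parabolic cube $Q_r=Q_r(t_0,x_0)$ with $\kappa r\le R_0$; freeze the coefficients on $Q_{2r}$ to the model operator $P_0$ and write $u=w+v$ on $Q_{2r}$, with $P_0w-\lambda w=\big[(Pu-\lambda u)+(P_0-P)u\big]\chi_{Q_{2r}}$ on $\bR^{d+1}$ and $P_0v-\lambda v=0$ in $Q_{2r}$; here $(P_0-P)u$ is the sum of the coefficient differences times $D^2u$ and of the lower-order terms $-b^jD_ju-cu$, the coefficient differences being bounded with $L_1$-averages over $Q_{2r}$ at most $N\gamma$. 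For $w$, using the solvability of $P_0$ in $L_q$ for a suitable $q\in(1,p)$ together with H\"older's inequality, one obtains
\begin{equation*}
\dashint_{Q_{2r}}|D_{xx'}w|\le N\gamma^{\beta}\big(\bM(|D^2u|^{s})(t_0,x_0)\big)^{1/s}+N\big(\bM(|Pu-\lambda u|)+\bM(|Du|)+\bM(|u|)\big)(t_0,x_0)
\end{equation*}
for some $\beta>0$ and some $s\in(1,p)$; the crucial point is that $s$ can be kept below $p$, which is possible precisely because the model equation is solvable in $L_q$ for \emph{every} $q\in(1,\infty)$ --- in the older $W^2_2$-based arguments of \cite{Kim07}-\cite{KimKrylov07} one was forced to take $s>2$, hence $p>2$. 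For $v$, since the coefficients of $P_0$ are independent of $x'=(x^2,\dots,x^d)$, the functions $D_{x'}v$ again solve $P_0\cdot-\lambda\cdot=0$ in $Q_{2r}$; combining the interior estimates for $P_0$ (from Theorem~\ref{thm3.2}, iterated using the homogeneity of the equation) with Sobolev embedding and the Krylov--Safonov interior H\"older estimate, and recovering $D_{11}v$ and $v_t$ via the reduction of the previous paragraph, one shows that $D_{xx'}v$ is H\"older continuous in $Q_r$ with
\begin{equation*}
\osc_{Q_{r/\kappa}}(D_{xx'}v)\le N\kappa^{-\beta'}\dashint_{Q_r}\big(|D^2u|+|Pu-\lambda u|+|Du|+|u|\big)
\end{equation*}
for some $\beta'>0$. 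Adding the two bounds controls $(D_{xx'}u)^{\#}(t_0,x_0)$.

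Now the Fefferman--Stein theorem followed by the Hardy--Littlewood maximal function theorem (the latter legitimate since $s<p$) gives $\|D_{xx'}u\|_{L_p}\le N(\kappa^{-\beta'}+\gamma^{\beta})\|D^2u\|_{L_p}+N_\kappa(\|Pu-\lambda u\|_{L_p}+\|Du\|_{L_p}+\|u\|_{L_p})$; together with the analogue for the original equation of the first display above (obtained by the same freezing argument, the perturbation off $\hat a^{11}(x^1)$ being admissible by Assumption~\ref{assump2} ($\gamma$) ii)), this yields $\|D^2u\|_{L_p}+\|u_t\|_{L_p}\le N(\kappa^{-\beta'}+\gamma^{\beta})\|D^2u\|_{L_p}+N_\kappa(\|Pu-\lambda u\|_{L_p}+\|Du\|_{L_p}+\|u\|_{L_p})$. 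Choosing $\kappa$ large and then $\gamma$ small absorbs the first term, while the remaining lower-order norms and the terms $\lambda\|u\|_{L_p}+\sqrt\lambda\|Du\|_{L_p}$ in \eqref{13.10.21} are dealt with by interpolation and the usual device of adjoining an auxiliary variable, which is where $\lambda_0>0$ enters. This proves part~i), whence part~ii) by the method of continuity. I expect the main obstacle to be the mean oscillation estimate for $D_{xx'}v$: establishing the interior H\"older regularity of this partial Hessian of solutions of the homogeneous model equation with the correct dependence on $\kappa$, and tracking the exponents so that $D^2u$ enters only through a maximal function of order $s<p$. It is exactly the availability of the full $L_q$-theory of the model (divergence-form) equation for all $q\in(1,\infty)$ that makes the whole range $p\in(1,\infty)$ reachable here.
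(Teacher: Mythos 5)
Your overall skeleton (model solvability via the change of variable $y^1=\int_0^{x^1}(a^{11})^{-1}$ and the divergence-form results, then a frozen-coefficient mean oscillation estimate, Fefferman--Stein plus Hardy--Littlewood, absorption by choosing $\kappa$ large and $\gamma$ small, and finally Agmon's idea, partition of unity and the method of continuity) is the paper's strategy. But there are two concrete gaps in how you implement the two auxiliary estimates. First, your sharp-function step is built on the claim that for the homogeneous model equation the \emph{mixed} second derivatives $D_{xx'}v$ (i.e.\ including $D_{1k}v$, $k\ge 2$) are interior H\"older continuous, ``by iterating interior estimates, Sobolev embedding and Krylov--Safonov.'' This does not follow: only tangential derivatives $D^{\gamma'}v$ solve the same equation, so Krylov--Safonov and the iteration give high regularity in $x'$ but no H\"older regularity of $D_1D_kv$ in the $(t,x^1)$ variables, where the coefficients are merely measurable ($D_1$ of a solution is not a solution, and $W^{1,2}_p$ with $p$ close to $1$ gives no pointwise control). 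The paper avoids exactly this issue: its oscillation estimate (Lemma \ref{lemma 9.10.1}, Theorems \ref{thm12.02}, \ref{thm4.1}, \ref{thm5.2}) is only for the purely tangential Hessian $D_{x'}^2u$, and the mixed derivatives $D_{x^1x'}u$ are recovered afterwards by the interpolation inequality \eqref{3.57}, while $D_1^2u$ and $u_t$ are recovered separately. (A related but different oscillation estimate with one $x^1$-derivative is mentioned in the paper only as a remark, proved by an iteration argument, and deliberately not used.)

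Second, your auxiliary bound $\|D^2u\|_{L_p}+\|u_t\|_{L_p}\le N\|D_{xx'}u\|_{L_p}+N\|Pu-\lambda u\|_{L_p}$ for the \emph{original} operator is justified only by saying that ``the perturbation off $\hat a^{11}(x^1)$ is admissible by Assumption \ref{assump2} ($\gamma$) ii).'' That assumption gives smallness of $a^{11}-\hat a^{11}(x^1)$ only in mean oscillation, not in $L_\infty$, so the term $(\hat a^{11}-a^{11})D_{11}u$ cannot be absorbed by a direct $L_p$ perturbation; handling it would require another sharp-function argument for $D_{11}u$, which is precisely what is unavailable. The paper's Lemma \ref{lem5.2} instead differentiates the equation: $w=D_1u$ satisfies the divergence-form equation \eqref{eq2.55} with leading coefficient $a^{11}(t,x)$, to which the local divergence-form estimate of Proposition \ref{lemA2} (valid under Assumption \ref{assump2} ii) and iii)) applies; this is also why that lemma is stated for $u\in C_0^\infty(Q_{\mu_1^{-1}R_0})$, a support restriction your version omits but which is needed since the smallness is only local. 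With these two steps replaced by the paper's versions (tangential-Hessian oscillation estimate plus \eqref{3.57}, and Lemma \ref{lem5.2} via $w=D_1u$ and Proposition \ref{lemA2}), the rest of your argument goes through as in the proof of Theorem \ref{thm2}.
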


As a consequence of Theorem \ref{thm3}, we obtain the $W^2_p$-solvability of second-order elliptic equations in nondivergence form with partially BMO coefficients with locally small semi-norms. For this purpose, we assume that $a^{ij}$, $b^i$, $c$ and $f$ are independent of $t$.
For $R>0$ now we denote
$$
a^{\#}_R=
\sup_{x_0\in \bR^{d}} \sup_{r\le R}\sup_{i,j}\dashint_{B_r(x_0)}|a^{ij}(x)-\bar a^{ij}(x^1)|\,dx,
$$
where for each $B_r(x_0)$ and $(i,j)$,
$$
\bar a^{ij}(x^1)=\dashint_{B_r'(x_0')}a^{ij}(x^1,y')\,dy'.
$$
\begin{assumption}[$\gamma$]
                                            \label{assump4}
There exists a positive constant $R_0$ such that $a^{\#}_{R_0}\le \gamma$.
\end{assumption}

\begin{theorem}
                                         \label{thm4}

For any
$p\in (1,\infty)$, there exists a $\gamma =\gamma(d,\delta,p) >0$
such that under Assumption \ref{assump4} ($\gamma$) the following holds.

i) For any $u\in W^{2}_p(\bR^{d})$,
\begin{equation*}
\lambda\|u\|_{L_{p}(\bR^{d})}+\sqrt{\lambda}
\|Du\|_{L_{p}(\bR^{d})}+\|D^{2}u\|_{L_{p}(\bR^{d})}
\leq N\|Lu-\lambda u\|_{L_{p}(\bR^{d})},
\end{equation*}
provided that $\lambda\geq \lambda_0$, where $\lambda_0
\geq0$ depend only
 on $d,\delta,p,K$ and $R_0$, and $N$ depends only on $d,\delta$ and $p$.

ii) For any $\lambda> \lambda_0$ and $f\in L_p(\bR^{d})$, there exists a unique solution
$u\in W^{2}_p(\bR^{d})$ of equation \eqref{elliptic} in
$\bR^{d}$.

iii) In the case that
$a^{jk}=a^{jk}(x^1)$ and $b^j\equiv c\equiv 0$, we can take
$\lambda_0=0$ in i) and ii).
\end{theorem}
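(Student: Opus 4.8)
The plan is to deduce Theorem~\ref{thm4} from the parabolic result, Theorem~\ref{thm3}, by viewing a time-independent elliptic equation as a parabolic equation on $\bR^{d+1}$. First I would check that Assumption~\ref{assump4}~($\gamma$) on the coefficients $a^{ij}=a^{ij}(x)$ of $L$ implies Assumption~\ref{assump2}~($\gamma$)~ii) and iii) for the parabolic operator $P=-D_t+a^{jk}D_{jk}+b^jD_j+c$ on $\bR^{d+1}$, with the same $\gamma$ and $R_0$. Indeed, since the coefficients are independent of $t$, for each parabolic cube $Q_r(t_0,x_0)$ the quantities $\hat a^{11}(x^1)$ and $\bar a^{ij}(t,x^1)$ appearing in Assumption~\ref{assump2} both reduce to the $x'$-average of the corresponding coefficient over $B_r'(x_0')$, which is exactly $\bar a^{ij}(x^1)$ of Assumption~\ref{assump4}, and the averages over $Q_r(t_0,x_0)$ of the relevant differences collapse to averages over $B_r(x_0)$; hence $a^{11,\#}_{R_0,2}$ and the parabolic $a^{\#}_{R_0}$ are both bounded by the elliptic $a^{\#}_{R_0}\le\gamma$. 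Moreover $P$ inherits the ellipticity constant $\delta$ and the bound $|b^j|+|c|\le K$.

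For part~i), fix $u\in W^2_p(\bR^d)$ and a cut-off $\zeta\in C_0^\infty(\bR)$, $\zeta\not\equiv0$, and set $v_n(t,x)=u(x)\zeta(t/n)$. Then $v_n\in W^{1,2}_p(\bR^{d+1})$ and
\[
Pv_n-\lambda v_n=\zeta(t/n)\,(Lu-\lambda u)-\frac1n\,\zeta'(t/n)\,u .
\]
I would apply Theorem~\ref{thm3}~i) to $v_n$ on $\bR^{d+1}$ (that is, with $T=+\infty$). Every spatial norm of $v_n$ factors as $\|D^{k}u\|_{L_p(\bR^d)}\,\|\zeta(\cdot/n)\|_{L_p(\bR)}$ for $k=0,1,2$, while $\|(v_n)_t\|_{L_p(\bR^{d+1})}=\frac1n\|u\|_{L_p(\bR^d)}\,\|\zeta'(\cdot/n)\|_{L_p(\bR)}$, and the right-hand side is at most $\|\zeta(\cdot/n)\|_{L_p(\bR)}\,\|Lu-\lambda u\|_{L_p(\bR^d)}+\frac1n\|\zeta'(\cdot/n)\|_{L_p(\bR)}\,\|u\|_{L_p(\bR^d)}$. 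Dividing by $\|\zeta(\cdot/n)\|_{L_p(\bR)}=n^{1/p}\|\zeta\|_{L_p(\bR)}$ and letting $n\to\infty$, the two terms carrying the factor $1/n$ vanish and what remains is exactly the estimate of part~i), with the same $\lambda_0=\lambda_0(d,\delta,p,K,R_0)$ and $N=N(d,\delta,p)$. For part~iii) the same computation is used, invoking Theorem~\ref{thm3}~iii), which allows $\lambda_0=0$.

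For part~ii), the estimate of part~i) shows that $L-\lambda\colon W^2_p(\bR^d)\to L_p(\bR^d)$ is injective with closed range when $\lambda>\lambda_0$, so it remains to prove surjectivity. I would do this by the method of continuity along the family $L_\tau=\tau L+(1-\tau)\Delta$, $\tau\in[0,1]$: each $L_\tau$ is uniformly elliptic with the same $\delta$, has lower-order coefficients bounded by $K$, and satisfies Assumption~\ref{assump4}~($\gamma$) since $|(\tau a^{ij}+(1-\tau)\delta^{ij})-(\tau\bar a^{ij}(x^1)+(1-\tau)\delta^{ij})|\le|a^{ij}-\bar a^{ij}(x^1)|$. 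Hence part~i) applies to every $L_\tau$ with $\lambda_0$ and $N$ independent of $\tau$, and since $\Delta u-\lambda u=f$ is uniquely solvable in $W^2_p(\bR^d)$ for every $\lambda>0$ and $f\in L_p(\bR^d)$ (classical, via Fourier multipliers), the method of continuity gives solvability of $Lu-\lambda u=f$ for all $\lambda>\lambda_0$; uniqueness is immediate from part~i). Part~iii) is proved the same way with $\lambda_0=0$, noting that then every $L_\tau$ has coefficients depending only on $x^1$.

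I do not expect a genuine obstacle here: this theorem is essentially a corollary of Theorem~\ref{thm3}. The points demanding the most care are the reduction of the parabolic mean-oscillation quantities of Assumption~\ref{assump2} to the elliptic one of Assumption~\ref{assump4} under $t$-independence, the limiting argument $n\to\infty$ in part~i), and checking that the hypotheses used in part~ii) (both of Theorem~\ref{thm3} via part~i), and of the method of continuity) hold uniformly along the homotopy.
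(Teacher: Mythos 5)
Your proposal is correct and follows the same route the paper intends: the paper states Theorem \ref{thm4} as a direct consequence of Theorem \ref{thm3} (time-independent coefficients viewed as parabolic ones), and your write-up simply supplies the standard details — checking that Assumption \ref{assump4} yields Assumption \ref{assump2} ii) and iii) when the coefficients do not depend on $t$, the cutoff $u(x)\zeta(t/n)$ argument with $n\to\infty$ for the a priori estimate, and the method of continuity for solvability. No gaps.
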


In Section \ref{sec6}, we shall extend Theorems \ref{thm2}, \ref{thm3} and \ref{thm4} to equations with more general coefficients. Roughly speaking, we allow hierarchically partially BMO coefficients. This is also one of the main advances of this paper. For example, when $d=3$, Assumption \ref{assump7b} means: $a^{11}$ is measurable in $x^1$ and has small mean oscillation in $(t,x^2,x^3)$; $a^{12}$, $a^{21}$ and $a^{22}$ are measurable in $(t,x^1)$ and has small mean oscillation in $(x^2,x^3)$; finally, all the other $a^{ij}$ are measurable in $(t,x^1,x^2)$ and has small mean oscillation in $x^3$.

\section{Proof of Theorem \ref{thm2}}
                                        \label{sec3}

We prove Theorem \ref{thm2} in this section. Let
$$
P_0u=-u_t+a^{ij}D_{ij}u,
$$
where $a^{11}=a^{11}(t)$ and $a^{ij}=a^{ij}(t,x^1)$ for $ij>1$.

Our proof relies mainly on the following solvability result.
\begin{theorem}
						\label{thm2.2}
Let $p\in (1,\infty)$ and $T\in (-\infty,\infty]$. Then for any $u\in W^{1,2}_p(\bR^{d+1}_T)$ and $\lambda\ge 0$, we have
\begin{align}
\lambda\|u\|_{L_{p}(\bR^{d+1}_T)}+\sqrt{\lambda}
\|Du\|_{L_{p}(\bR^{d+1}_T)}&+\|D^2u\|_{L_{p}(\bR^{d+1}_T)}
+\|u_{t}\|_{L_{p}(\bR^{d+1}_T)}\nonumber\\
                \label{eq10.30}
\le &N\|P_0u-\lambda u\|_{L_{p}(\bR^{d+1}_T)},
\end{align}
where $N=N(d,p,\delta)>0$.
Moreover, for any $f\in L_p(\bR^{d+1}_T)$ and $\lambda>0$ there is a unique $u\in W^{1,2}_p(\bR^{d+1}_T)$ solving
$$
P_0 u-\lambda  u=f \quad\text{in}\,\,\,\bR^{d+1}_T.$$
\end{theorem}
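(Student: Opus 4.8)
The plan is to deduce both the estimate \eqref{eq10.30} and the solvability from the divergence-form theory recalled in the Appendix (from \cite{Dong08,DongKim08b}). The first step is to put $P_0$ into divergence form. After symmetrizing the $a^{ij}$, the structural hypotheses — $a^{11}$ depends only on $t$, and each $a^{ij}$ with $(i,j)\ne(1,1)$ depends only on $(t,x^1)$, hence is independent of at least one variable $x^m$ with $m\ge 2$ — allow us to integrate by parts in each term $a^{ij}D_{ij}u$ and obtain $P_0u=-u_t+D_i(\tilde a^{ij}D_ju)$, where $\tilde a=\tilde a(t,x^1)$ is bounded (by $2\delta^{-1}$) and its symmetric part equals $(a^{ij})$, so that $\tilde a^{ij}\xi^i\xi^j\ge\delta|\xi|^2$. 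Since $\tilde a$ is a function of $(t,x^1)$ only, it trivially satisfies the partial-BMO smallness hypotheses required by the divergence-form results. Thus $P_0u-\lambda u=f$ is literally a divergence-form equation with right-hand side $f\in\bH^{-1}_p$.

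The divergence-form theory only controls $Du$ (in $\cH^1_p$), not $D^2u$, so the heart of the matter is a bootstrap recovering the full $W^{1,2}_p$-norm. Applied directly to $u$, the divergence-form estimate gives $\lambda\|u\|_{L_p}+\sqrt\lambda\|Du\|_{L_p}+\|Du\|_{L_p}\le N\|f\|_{L_p}$. Next, for each $k\ge 2$ the coefficients $\tilde a^{ij}$ are independent of $x^k$, so $D_ku\in\cH^1_p$ solves the same divergence-form equation with right-hand side $D_kf=\Div(fe_k)$; applying the divergence-form estimate to $D_ku$ yields $\|D(D_ku)\|_{L_p}+\sqrt\lambda\|D_ku\|_{L_p}\le N\|f\|_{L_p}$, whence $\|D_{ij}u\|_{L_p}\le N\|f\|_{L_p}$ for every $(i,j)\ne(1,1)$. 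To capture the remaining $D_{11}u$ and $u_t$, rewrite the equation as
\[
-u_t+a^{11}(t)D_{11}u=f-\sum_{(i,j)\ne(1,1)}a^{ij}D_{ij}u+\lambda u=:g,
\]
so that $\|g\|_{L_p}\le N\|f\|_{L_p}$ by the bounds just obtained together with $\lambda\|u\|_{L_p}\le N\|f\|_{L_p}$. Since the operator $-\partial_t+a^{11}(t)D_{11}$ is essentially one-dimensional — after the bi-Lipschitz time change $s=\int_0^t a^{11}(r)\,dr$ it reduces, up to the bounded factor $a^{11}$, to the heat operator $-\partial_s+D_{11}$ — the classical $L_p$ estimate for the one-dimensional heat equation (applied with $x'$ as parameters) gives $\|u_t\|_{L_p}+\|D_{11}u\|_{L_p}\le N\|g\|_{L_p}\le N\|f\|_{L_p}$. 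Summing the contributions yields \eqref{eq10.30}, valid for every $\lambda\ge 0$ and every $u\in W^{1,2}_p(\bR^{d+1}_T)$.

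For solvability when $\lambda>0$, I would run the method of continuity along $P^{(\theta)}=\theta P_0+(1-\theta)(\Delta-\partial_t)$, $\theta\in[0,1]$: each $P^{(\theta)}$ has leading coefficients of the same structure (in particular $a^{(\theta),11}$ depends only on $t$) and the same ellipticity and boundedness constants, so \eqref{eq10.30} holds for $P^{(\theta)}-\lambda$ with a constant independent of $\theta$; since $\Delta-\partial_t-\lambda$ is $W^{1,2}_p(\bR^{d+1}_T)$-solvable for $\lambda>0$, so is $P_0-\lambda$. Uniqueness is immediate from \eqref{eq10.30}. (In the existence step one first smooths $f$ so that the differentiations above are legitimate and then passes to the limit using the estimate; the half-space-in-time $\bR^{d+1}_T$ is handled exactly as in the divergence-form and classical results.)

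The step I expect to be the main obstacle is precisely this passage from the first-order control supplied by the divergence-form theory to full $W^{1,2}_p$-control: the divergence-form results are one order too weak, and it is only the special dependence of the coefficients that makes the bootstrap go through — one may differentiate the equation in the $d-1$ directions $x^2,\dots,x^d$ without creating distributional coefficients, and the residual operator $-\partial_t+a^{11}(t)D_{11}$ is simple enough to invert. The other point needing care, though elementary, is checking that the rewritten coefficients $\tilde a$ genuinely meet the hypotheses of the divergence-form results: boundedness, ellipticity of the symmetric part, and dependence on $(t,x^1)$ only.
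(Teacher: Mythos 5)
Your argument is correct in substance and rests on the same central mechanism as the paper's proof: rewrite $P_0$ in divergence form (possible precisely because $a^{11}=a^{11}(t)$ and $a^{ij}=a^{ij}(t,x^1)$ for $ij>1$), observe that $D_ku$, $k\ge 2$, solves the same divergence-form equation with right-hand side $\Div(f e_k)$, and apply the $\cH^1_p$ theory of \cite{Dong08,DongKim08b} to get $\|D_{xx'}u\|_{L_p}\le N\|f\|_{L_p}$; note that for the structure $a^{11}=a^{11}(t)$ the relevant statement is Proposition \ref{lemA3} (the $A^{11}=A^{11}(t)$ case), not Proposition \ref{lemA1}. Where you genuinely differ is in recovering the remaining quantities: the paper uses \eqref{eq2.32} and invokes Theorem 5.5 of \cite{Krylov_2007_mixed_VMO} for the auxiliary operator $-\partial_t+a^{11}(t)D_1^2+\Delta_{d-1}$, which bounds the entire left-hand side of \eqref{eq10.30} by $N\|f\|_{L_p}+N\sum_{ij>1}\|D_{ij}u\|_{L_p}$ in one stroke, whereas you obtain $\lambda\|u\|_{L_p}+\sqrt{\lambda}\|Du\|_{L_p}$ from the divergence-form estimate applied to $u$ itself and then recover $u_t$ and $D_{11}u$ by reducing $-\partial_t+a^{11}(t)D_{11}$ to the one-dimensional heat operator via the time change $s=\int_0^t a^{11}$, with $x'$ as parameters. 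This is a legitimate and somewhat more self-contained alternative for that piece (it replaces the measurable-in-time VMO machinery by the classical one-dimensional parabolic Calder\'on--Zygmund estimate), at the price of an extra change of variables and of needing the divergence-form estimate with an $L_p$ right-hand side $f$, which is in \cite{DongKim08b} but not literally in Proposition \ref{lemA3} as stated (only $\Div g$ appears there). Two small blemishes, neither fatal: the extra term $\|Du\|_{L_p}\le N\|f\|_{L_p}$ you claim from the direct application of the divergence-form estimate is false for small $\lambda$ (it fails by parabolic scaling), but you never use it; and the differentiation of the equation and the fiberwise use of the heat-operator estimate should be justified by proving the a priori estimate for $u\in C_0^\infty$ and concluding by density, as the paper does, rather than by ``smoothing $f$''.
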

\begin{proof}
First we assume $T=\infty$. By the method of continuity, it suffices to prove the a priori estimate \eqref{eq10.30}. Also we can assume $u\in C_0^\infty$ by a density argument. Let $f=P_0 u-\lambda u$. Note that
\begin{equation}
 				\label{eq2.32}
-u_t+a^{11}D_1^2 u+\Delta_{d-1}u-\lambda u=f+\sum_{ij>1}(\delta_{ij}-a^{ij})D_{ij}u.
\end{equation}
By using Theorem 5.5 of \cite{Krylov_2007_mixed_VMO}, the left-hand side of \eqref{eq10.30} is less than
$$
N\|f\|_{L_{p}}+N\sum_{ij>1}\|D_{ij}u\|_{L_{p}}.
$$
Thus, we only need to prove
\begin{equation}
                \label{eq10.36}
\|D_{xx'}u\|_{L_{p}}\le N\|f\|_{L_{p}}.
\end{equation}

Rewrite operator $P_0$ into divergence form
$$
P_0 u=-u_t+D_1(a^{11} D_1 u)+\sum_{j=2}^d D_{j}\left((a^{1j}+a^{j1})D_1u
\right)+\sum_{i,j=2}^d D_{j}(a^{ij}D_iu).
$$
Clearly, for $k=2,...,d$, $D_k u$ satisfies
$$
P_0(D_k u)-\lambda D_k u=D_k f.
$$
Owing to Proposition \ref{lemA3}, we have the $\cH^{1}_p$-estimate of divergence form parabolic equations with leading coefficients under the condition of the theorem (and $a^{ij}$ not necessary to be symmetric). Therefore, we get
\begin{equation*}
\|D_{xx^k} u\|_{L_p}\le N\|f\|_{L_p}.
\end{equation*}
This gives \eqref{eq10.36} and completes the proof of the theorem when $T=\infty$.

For general $T\in (-\infty,\infty]$, we use the fact that $u=w$ for $t<T$, where $w\in
W_p^{1,2}$ solves
$$
P_0 w-\lambda w=\chi_{t<T}(P_0 u-\lambda u).
$$ The theorem is proved.
\end{proof}

An immediate corollary of Theorem \ref{thm2.2} is the following solvability result of the initial value problem.

\begin{corollary}
						\label{cor2.3}
Let $p\in (1,\infty)$ and $T\in (0,\infty)$. Then for any $f\in L_p((0,T)\times\bR^{d})$ and $\lambda>0$ there is a unique $u\in W^{1,2}_p((0,T)\times\bR^{d})$ solving $P_0 u-\lambda  u=f$ in $(0,T)\times\bR^{d}$ and $u(0,\cdot)=0$. Moreover, we have
$$
\lambda\|u\|_{L_{p}((0,T)\times\bR^{d})}+\sqrt{\lambda}
\|Du\|_{L_{p}((0,T)\times\bR^{d})}+\|D^2u\|_{L_{p}
((0,T)\times\bR^{d})}
$$
\begin{equation*}
+\|u_{t}\|_{L_{p}((0,T)\times\bR^{d})}
\le N\|P_0u-\lambda u\|_{L_{p}((0,T)\times\bR^{d})},
\end{equation*}
where $N=N(d,p,\delta)>0$.
\end{corollary}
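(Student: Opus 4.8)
The plan is to reduce Corollary \ref{cor2.3} to Theorem \ref{thm2.2} by extending the data by zero to negative times. Given $f\in L_p((0,T)\times\bR^d)$, I would set $\tilde f=\chi_{t>0}f$ and regard it as an element of $L_p(\bR_T^{d+1})$, noting that $\|\tilde f\|_{L_p(\bR_T^{d+1})}=\|f\|_{L_p((0,T)\times\bR^d)}$. Since $T$ is finite and hence admissible in Theorem \ref{thm2.2}, that theorem produces, for each $\lambda>0$, a unique $\tilde u\in W^{1,2}_p(\bR_T^{d+1})$ solving $P_0\tilde u-\lambda\tilde u=\tilde f$ in $\bR_T^{d+1}$, together with the estimate
\begin{equation*}
\lambda\|\tilde u\|_{L_{p}(\bR_T^{d+1})}+\sqrt{\lambda}\|D\tilde u\|_{L_{p}(\bR_T^{d+1})}+\|D^{2}\tilde u\|_{L_{p}(\bR_T^{d+1})}+\|\tilde u_{t}\|_{L_{p}(\bR_T^{d+1})}\le N\|\tilde f\|_{L_{p}(\bR_T^{d+1})}.
\end{equation*}

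The next step is to check that $\tilde u$ vanishes for $t\le 0$. Restricted to $(-\infty,0)\times\bR^d$, the function $\tilde u$ lies in $W^{1,2}_p((-\infty,0)\times\bR^d)$ and solves $P_0\tilde u-\lambda\tilde u=0$ there, because $\tilde f\equiv 0$ on that slab; since the zero function is also a solution, the uniqueness part of Theorem \ref{thm2.2}, applied with $T$ replaced by $0$, forces $\tilde u\equiv 0$ on $(-\infty,0)\times\bR^d$. As every element of $W^{1,2}_p$ has a representative continuous in $t$ with values in a suitable space of functions of $x$, this gives $\tilde u(0,\cdot)=0$. Then $u:=\tilde u|_{(0,T)\times\bR^d}$ solves $P_0u-\lambda u=f$ in $(0,T)\times\bR^d$ with $u(0,\cdot)=0$, and the asserted estimate is immediate from the display above, using $\|\tilde f\|_{L_p(\bR_T^{d+1})}=\|f\|_{L_p((0,T)\times\bR^d)}$ and that each norm on the left over $(0,T)\times\bR^d$ is bounded by the corresponding norm over $\bR_T^{d+1}$.

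For uniqueness within $(0,T)\times\bR^d$, I would take $u\in W^{1,2}_p((0,T)\times\bR^d)$ with $P_0u-\lambda u=0$ and $u(0,\cdot)=0$, extend it by zero to $t\le 0$ to obtain $\bar u$, observe that the vanishing initial trace is exactly what makes $\bar u\in W^{1,2}_p(\bR_T^{d+1})$ and that $\bar u$ solves $P_0\bar u-\lambda\bar u=0$ in $\bR_T^{d+1}$, and then conclude $\bar u=0$ from the uniqueness in Theorem \ref{thm2.2}.

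The only delicate ingredient is the bookkeeping at the slice $\{t=0\}$: one needs the standard trace theory for the anisotropic space $W^{1,2}_p$, namely that a $W^{1,2}_p$ function on a parabolic slab extends by zero across the initial time to a $W^{1,2}_p$ function precisely when its initial trace vanishes, and, conversely, that a $W^{1,2}_p$ function that is identically zero on a sub-slab has vanishing trace at the interface. I expect this to be the main (though routine) point, and I would simply invoke it rather than reprove it; everything else is a direct transcription of Theorem \ref{thm2.2}.
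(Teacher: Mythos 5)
Your argument is correct and is precisely the standard reduction the paper has in mind when it calls this "an immediate corollary" of Theorem \ref{thm2.2}: extend $f$ by zero to $t<0$, solve on $\bR^{d+1}_T$, use the uniqueness (i.e.\ the a priori estimate with $\lambda>0$) on $\bR^{d+1}_0$ to see the solution vanishes for $t\le 0$, and handle uniqueness on $(0,T)$ by zero-extension across the initial time, where the vanishing trace is exactly what is needed. The trace/extension facts you invoke are standard for $W^{1,2}_p$, so there is nothing to add.
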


To prove the solvability for general operator $P$, we need the following H\"older estimate, which is an immediate consequence of the Krylov--Safonov estimate.
\begin{lemma}
                                     \label{lemma 9.10.1}
Let $p\in (1,\infty)$, $d\geq 2$, $\kappa \ge 2$, and $r > 0$.
Assume that $u \in
C_0^{\infty}$  and $P_0 u =0$ in $Q_{\kappa r}$.
Then there exist constants
$N = N(d,p, \delta)$ and $\alpha=\alpha(d,p,\delta)\in (0,1]$ such that
for any multi-index  $\gamma=(\gamma^1,\gamma')$,
\begin{equation*}
\dashint_{Q_r} |D^{\gamma'}u  -
 (D^{\gamma'}u)_{Q_r}|^p \, dx\,dt
\le N \kappa^{-p\alpha} \left(|D^{\gamma'}u|^{p}\right)_{Q_{\kappa r}}.
\end{equation*}
\end{lemma}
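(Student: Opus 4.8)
The plan is to reduce everything to a standard interior H\"older estimate for caloric-type functions via a two-step differentiation and scaling argument. First I would observe that since $P_0 u = 0$ in $Q_{\kappa r}$ and the coefficients $a^{11} = a^{11}(t)$ and $a^{ij} = a^{ij}(t,x^1)$ for $ij > 1$ depend only on $t$ and $x^1$, any tangential derivative $D^{\gamma'} u$ (that is, a derivative taken only in the $x'$ variables) again satisfies $P_0 (D^{\gamma'} u) = 0$ in the same cylinder, because differentiating in $x'$ commutes with the operator. Thus it suffices to prove the estimate for $v := D^{\gamma'} u$ with $\gamma = 0$, i.e.\ to show
\begin{equation*}
\dashint_{Q_r} |v - (v)_{Q_r}|^p \, dx\,dt \le N\kappa^{-p\alpha} \left(|v|^p\right)_{Q_{\kappa r}}
\end{equation*}
for any $v \in C_0^\infty$ with $P_0 v = 0$ in $Q_{\kappa r}$.

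Next I would invoke the Krylov--Safonov interior H\"older estimate: since $a^{jk}$ is uniformly elliptic with ellipticity constant $\delta$ and $v$ solves the nondivergence parabolic equation $-v_t + a^{jk} D_{jk} v = 0$ in $Q_{\kappa r}$, there exist $\alpha = \alpha(d,\delta) \in (0,1]$ and $N = N(d,\delta)$ such that
\begin{equation*}
[v]_{C^{\alpha/2,\alpha}(Q_{\kappa r/2})} \le N (\kappa r)^{-\alpha} \left(\sup_{Q_{\kappa r}} |v|\right),
\end{equation*}
where the H\"older seminorm is the parabolic one. In fact it is cleaner to use the oscillation-decay form of Krylov--Safonov: $\osc_{Q_\rho} v \le N (\rho/(\kappa r))^\alpha \osc_{Q_{\kappa r}} v$ for $\rho \le \kappa r/2$. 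Applying this with $\rho = r$ (legitimate since $\kappa \ge 2$) gives $\osc_{Q_r} v \le N \kappa^{-\alpha} \osc_{Q_{\kappa r}} v$. Then
\begin{equation*}
\dashint_{Q_r} |v - (v)_{Q_r}|^p \, dx\,dt \le \left(\osc_{Q_r} v\right)^p \le N \kappa^{-p\alpha} \left(\osc_{Q_{\kappa r}} v\right)^p,
\end{equation*}
and since $v$ is a solution we control $\osc_{Q_{\kappa r}} v$ by $2\sup_{Q_{\kappa r}} |v|$, which in turn is bounded by $N \left(|v|^p\right)_{Q_{\kappa r}}^{1/p}$ by the local maximum principle / local boundedness estimate for nondivergence parabolic equations (again Krylov--Safonov, with a radius slightly larger than $\kappa r$; alternatively absorb a harmless constant by passing to $Q_{2\kappa r}$ and noting the hypothesis is stated on $Q_{\kappa r}$, which requires a cosmetic adjustment of constants, or simply use the sup over $Q_{\kappa r}$ throughout). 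Raising to the $p$-th power and rearranging yields the claim; the dependence of $N$ and $\alpha$ on $p$ enters only through the final application of H\"older/Jensen to pass from the sup bound to the averaged $L_p$ bound, which is harmless.

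The main obstacle, such as it is, is bookkeeping rather than substance: one must be careful that the local sup bound and the oscillation decay are both applied on cylinders strictly inside $Q_{\kappa r}$, so that a priori one gets an estimate on $Q_{\kappa r/2}$ rather than $Q_{\kappa r}$; reconciling this with the clean statement requires either enlarging the hypothesis cylinder by a fixed factor (absorbed into $N$ since $\kappa \ge 2$) or tracking the constants carefully. The other minor point is to confirm that the Krylov--Safonov machinery applies to $D^{\gamma'} u$ even though $u$ itself is only assumed $C_0^\infty$ with no equation satisfied by normal derivatives --- this is fine precisely because we only differentiate in the $x'$ directions, in which the coefficients are constant. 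No compactness or iteration is needed; the whole lemma is a direct consequence of the cited interior estimates together with the translation invariance of $P_0$ in $x'$.
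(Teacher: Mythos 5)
Your proposal follows essentially the same route as the paper: reduce to $\gamma=0$ because $P_0$ commutes with derivatives in $x'$ (the coefficients are independent of $x'$), then invoke the Krylov--Safonov theory and compare the mean oscillation over $Q_r$ with $\osc_{Q_r}$. The paper does precisely this after scaling to $r=1$: it quotes the combined estimate $\osc_{Q_{1/\kappa}}u\le N\kappa^{-\alpha}\|u\|_{L_p(Q_1)}$ (Lemma 4.2.4 of \cite{Kr85} together with Theorem 7.21 of \cite{Li}), which already packages the oscillation decay and the local boundedness on correctly nested cylinders, and then rescales.

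The one step in your write-up that is not correct as stated is the chain $\osc_{Q_{\kappa r}}v\le 2\sup_{Q_{\kappa r}}|v|\le N\left(|v|^p\right)_{Q_{\kappa r}}^{1/p}$: the local maximum principle bounds the sup only on a strictly interior cylinder, and a solution in $Q_{\kappa r}$ may be large near the parabolic boundary of $Q_{\kappa r}$ while its $L_p$ average there is small, so the sup over all of $Q_{\kappa r}$ cannot be controlled this way. Among the remedies you mention, ``enlarging the hypothesis cylinder'' is not available (the equation is only assumed in $Q_{\kappa r}$) and ``use the sup over $Q_{\kappa r}$ throughout'' is exactly the invalid step; the remedy that works is the careful nesting: apply the oscillation decay from $Q_{\kappa r/2}$ down to $Q_r$, which still yields a factor $N\kappa^{-\alpha}$, and then the local boundedness estimate $\sup_{Q_{\kappa r/2}}|v|\le N\left(|v|^p\right)_{Q_{\kappa r}}^{1/p}$; for the remaining range of $\kappa$ where this nesting is unavailable (say $2\le\kappa\le 4$) the inequality is trivial, since then $\dashint_{Q_r}|v-(v)_{Q_r}|^p\,dx\,dt\le 2^p\kappa^{d+2}\left(|v|^p\right)_{Q_{\kappa r}}$ with $\kappa$ bounded, so the factor $\kappa^{-p\alpha}$ can be absorbed into $N$. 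With this correction your argument coincides with the paper's proof.
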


\begin{proof} Since $LD^{\gamma'}u=0$ in $Q_{\kappa r}$,
it suffices to concentrate on $\gamma=0$. By using scaling
we reduce the general situation to the one in which $r=1$.
By Lemma 4.2.4 of \cite{Kr85} and Theorem 7.21 of \cite{Li},
$$
\osc _{Q_{1/\kappa}}u
\le N\kappa^{-\alpha} \|u \|_{L_p(Q_1)}
$$
with $\alpha$ and $N$ as in the statement. Scaling this estimate
shows that
$$
\osc _{Q_{1}}u
\le N \kappa^{-\alpha}\left(| u|^{p}\right)_{Q_{\kappa }}^{1/p}.
$$
It only remains to observe that
$$
\dashint_{Q_1} | u  -
 ( u)_{Q_1}|^p \, dx\,dt\leq N
 (\osc _{Q_{1}}u)^p.
$$
The lemma is proved.
\end{proof}

\begin{theorem}
                                        \label{thm12.02}
Let $d\geq 2$, $p\in (1,\infty)$ and let
$\alpha$ be the constant in Lemma \ref{lemma 9.10.1}. Then there is a constant $N$ depending only on $d,p,\delta$ such that for any $u\in W^{1,2}_{p,\text{loc}}$, $r\in (0,\infty)$, and
$\kappa\ge 4$,
\begin{equation*}
\left(|D_{x'}^2u(t,x)-(D_{x'}^2 u)_{Q_r}|^p\right)_{Q_r}
\le N\kappa^{d+2}
\left(|P_0u|^p\right)_{Q_{\kappa
r}}+N\kappa^{-p\alpha}
\left(|D_{x'}^2 u|^p\right)_{Q_{\kappa r}}.
\end{equation*}
\end{theorem}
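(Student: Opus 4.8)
The plan is to split $D_{x'}^2 u$ into a piece coming from an inhomogeneous equation (controlled in $L_p$ by $P_0 u$) and a piece coming from a homogeneous equation (controlled via the H\"older estimate of Lemma \ref{lemma 9.10.1}). Fix $Q_r = Q_r(t_0,x_0)$ and, after translating, assume $(t_0,x_0)=(0,0)$; by scaling we may reduce to $r=1$ if convenient, but it is cleaner to keep $r$ general. On the larger cylinder $Q_{\kappa r}$, use a cutoff $\eta\in C_0^\infty$ with $\eta \equiv 1$ on $Q_{\kappa r/2}$ and supported in $Q_{\kappa r}$, and solve
\begin{equation*}
P_0 w - \lambda w = \chi_{Q_{\kappa r}} P_0 u \quad \text{in } \bR^{d+1},
\end{equation*}
with, say, $\lambda$ fixed at a convenient positive value (or $\lambda=0$ if one prefers to work on a finite time strip; the point is that Theorem \ref{thm2.2} and Corollary \ref{cor2.3} give solvability and the a priori bound). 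Then $w\in W^{1,2}_p$, and by Theorem \ref{thm2.2},
\begin{equation*}
\|D^2 w\|_{L_p(\bR^{d+1})} \le N \|\chi_{Q_{\kappa r}} P_0 u\|_{L_p} = N \bigl( |P_0 u|^p \bigr)_{Q_{\kappa r}}^{1/p} |Q_{\kappa r}|^{1/p}.
\end{equation*}
Dividing by $|Q_r|^{1/p}$ and using $|Q_{\kappa r}|/|Q_r| = \kappa^{d+2}$ gives
\begin{equation*}
\bigl( |D_{x'}^2 w|^p \bigr)_{Q_r} \le N \kappa^{d+2} \bigl( |P_0 u|^p \bigr)_{Q_{\kappa r}}.
\end{equation*}

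Next set $v = u - w$. On $Q_{\kappa r/2}$ (where the cutoff is $1$, so $\chi_{Q_{\kappa r}}P_0u = P_0 u$ there), $v$ satisfies $P_0 v - \lambda v = -\lambda u$... — here one must be a little careful, since $P_0$ in the statement has no zero-order term and $\lambda$ would spoil homogeneity. The fix is to take $\lambda = 0$: solve $P_0 w = \chi_{Q_{\kappa r}}P_0 u$ directly (on a finite time strip containing $Q_{\kappa r}$, using Corollary \ref{cor2.3}-type solvability, or simply invoke the $\lambda=0$ part once a strip is fixed), so that $P_0 v = 0$ in $Q_{\kappa r/2}$. Replacing $\kappa$ by $\kappa/2$ throughout (which only changes constants, since $\kappa \ge 4$ means $\kappa/2\ge 2$ and the powers $\kappa^{d+2}$, $\kappa^{-p\alpha}$ absorb the factor), apply Lemma \ref{lemma 9.10.1} to $v$ with $\gamma' = $ the multi-index for a second $x'$-derivative and with the dilation ratio $\kappa/2$ in place of $\kappa$:
\begin{equation*}
\bigl( |D_{x'}^2 v - (D_{x'}^2 v)_{Q_r}|^p \bigr)_{Q_r} \le N \kappa^{-p\alpha} \bigl( |D_{x'}^2 v|^p \bigr)_{Q_{\kappa r}}.
\end{equation*}
(One needs $v$ smooth enough to invoke the lemma; this is handled by first proving the estimate for smooth $u$ and then passing to the limit for $u \in W^{1,2}_{p,\mathrm{loc}}$ by a mollification/approximation argument, since all quantities are continuous in the relevant norms.)

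Finally, combine the two pieces. Write $D_{x'}^2 u = D_{x'}^2 v + D_{x'}^2 w$; then
\begin{equation*}
\bigl( |D_{x'}^2 u - (D_{x'}^2 u)_{Q_r}|^p \bigr)_{Q_r} \le N \bigl( |D_{x'}^2 v - (D_{x'}^2 v)_{Q_r}|^p \bigr)_{Q_r} + N \bigl( |D_{x'}^2 w|^p \bigr)_{Q_r},
\end{equation*}
using that the oscillation of $w$ is bounded by $N(|D_{x'}^2 w|^p)_{Q_r}$ (subtracting the mean only helps). For the first term use the H\"older estimate above and then bound $(|D_{x'}^2 v|^p)_{Q_{\kappa r}} \le N(|D_{x'}^2 u|^p)_{Q_{\kappa r}} + N(|D_{x'}^2 w|^p)_{Q_{\kappa r}}$, and for both $w$-terms use the $W^{1,2}_p$ bound on $w$ derived above (note $(|D_{x'}^2 w|^p)_{Q_r} \le N\kappa^{d+2}(|D_{x'}^2 w|^p)_{Q_{\kappa r}} \le N\kappa^{d+2}(|P_0 u|^p)_{Q_{\kappa r}}$, and $\kappa^{-p\alpha}\le 1$). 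Collecting terms yields exactly
\begin{equation*}
\bigl( |D_{x'}^2 u - (D_{x'}^2 u)_{Q_r}|^p \bigr)_{Q_r} \le N\kappa^{d+2} \bigl( |P_0 u|^p \bigr)_{Q_{\kappa r}} + N\kappa^{-p\alpha} \bigl( |D_{x'}^2 u|^p \bigr)_{Q_{\kappa r}}.
\end{equation*}

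The main obstacle I anticipate is the bookkeeping around the zero-order term and the domain: Theorem \ref{thm2.2} is stated with a $\lambda u$ term and on $\bR^{d+1}_T$, whereas the cleanest decomposition wants $P_0 w = \chi_{Q_{\kappa r}} P_0 u$ with no $\lambda$. This is resolved by working on a fixed finite time strip (so $\lambda = 0$ solvability, as in part (iii) of Theorem \ref{thm2.2} or in Corollary \ref{cor2.3}, applies) and noting the estimate is local in $Q_{\kappa r}$. A secondary technical point is the smoothness needed to apply Lemma \ref{lemma 9.10.1} to $v$: since $v$ solves $P_0 v = 0$ with coefficients independent of $x'$ (and of $x^1$ for $a^{11}$), interior regularity in the $x'$-directions is available, or one simply proves the inequality for $u \in C^\infty$ first and approximates. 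Everything else is the standard Campanato-type "freeze and compare" scheme.
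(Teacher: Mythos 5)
Your proposal is correct and is essentially the paper's own argument: the paper proves Theorem \ref{thm12.02} in one line by citing Corollary \ref{cor2.3} and Lemma \ref{lemma 9.10.1} together with the splitting scheme of Theorem 4.5 of \cite{DongKrylov}, which is precisely your decomposition $u=v+w$ with $w$ solving the Cauchy problem with right-hand side $\chi_{Q_{\kappa r}}P_0u$ (with $\lambda\downarrow 0$ on a finite time strip) and $v$ a local solution of $P_0v=0$ handled by the H\"older estimate. The only cosmetic remarks are that your cutoff $\eta$ is never used and that $P_0v=0$ holds on all of $Q_{\kappa r}$, so the passage to $Q_{\kappa r/2}$ is unnecessary; these do not affect the argument.
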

\begin{proof}
The theorem follows from Corollary \ref{cor2.3} and Lemma \ref{lemma 9.10.1}; see, for instance, the proof of Theorem 4.5 \cite{DongKrylov}.
\end{proof}

Next we generalize Theorem \ref{thm12.02}.
\begin{theorem}
                                            \label{thm4.1}
Let $d\geq 2$, $p\in (1,\infty)$ and let
$\alpha$ be the constant in
Lemma \ref{lemma 9.10.1}, $\gamma > 0$, $\tau,\sigma \in (1,\infty)$,
$1/\tau+1/\sigma=1$. Assume $b^i=c=0$ and $u\in W^{1,2}_p$. Then under
Assumption \ref{assump2} ($\gamma$) i) and iii) there exists a positive constant $N$ depending only on
$d$, $p$, $\delta$, and $\tau$ such that, for any
$(t_0,x_0)\in \bR^{d+1}$,
$r\in (0,\infty)$, and $\kappa\ge 4$,
\begin{multline}
                                \label{eq13.5.05}
\left(|D_{x'}^2 u(t,x)-(D_{x'}^2u)_{Q_r(t_0,x_0)}|^p
\right)_{Q_r(t_0,x_0)}\\
\le N\kappa^{d+2}
\left(|Pu|^p\right)_{Q_{\kappa r}(t_0,x_0)}
+N\kappa^{d+2}\gamma^{1/\sigma}
\left(|D^2 u|^{p\tau}\right)_{Q_{\kappa r}(t_0,x_0)}^{1/\tau}\\
+N\kappa^{-p\alpha}
\left(|D_{x'}^2 u|^p\right)_{Q_{\kappa r}(t_0,x_0)},
\end{multline}
provided that $u$ vanishes outside  $Q_{R_0}$.
\end{theorem}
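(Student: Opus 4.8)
\medskip

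\noindent\emph{The plan.} Since $b^i=c=0$ we have $Pu=-u_t+a^{ij}D_{ij}u$; in this proof I write $Q_\rho=Q_\rho(t_0,x_0)$. The plan is to compare $P$ on the cube $Q_{\kappa r}$ with a frozen operator of the precise structural type covered by Theorem \ref{thm12.02}, apply that theorem, and then dispose of the resulting commutator term $(\bar a^{ij}-a^{ij})D_{ij}u$ by H\"older's inequality with the conjugate exponents $\tau,\sigma$ together with the partial BMO bounds in Assumption \ref{assump2}. The argument will split according to whether $\kappa r\le R_0$ or $\kappa r>R_0$; the former carries the content, and the latter will be handled using the support hypothesis.

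\medskip

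\noindent\emph{The case $\kappa r\le R_0$.} I would freeze the leading coefficients over $Q_{\kappa r}$ exactly as in Assumption \ref{assump2}, namely
\begin{equation*}
\bar a^{11}(t)=\dashint_{B_{\kappa r}(x_0)}a^{11}(t,x)\,dx,\qquad
\bar a^{ij}(t,x^1)=\dashint_{B_{\kappa r}'(x_0')}a^{ij}(t,x^1,y')\,dy'\quad(ij>1),
\end{equation*}
and set $\bar P_0u=-u_t+\bar a^{11}D_{11}u+\sum_{ij>1}\bar a^{ij}D_{ij}u$. Being averages of $(a^{ij})$, the frozen coefficients still have ellipticity constant $\delta$, while $\bar a^{11}$ depends only on $t$ and $\bar a^{ij}$ ($ij>1$) only on $(t,x^1)$; hence $\bar P_0$ is of the form for which Theorem \ref{thm12.02} was proved (its constants depending only on $d,p,\delta$). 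Applying Theorem \ref{thm12.02} to $u$ with $P_0$ replaced by $\bar P_0$ — after the translation recentering $Q_r$ at the origin, which preserves this structural form — would give
\begin{equation*}
\bigl(|D_{x'}^2u-(D_{x'}^2u)_{Q_r}|^p\bigr)_{Q_r}
\le N\kappa^{d+2}\bigl(|\bar P_0u|^p\bigr)_{Q_{\kappa r}}
+N\kappa^{-p\alpha}\bigl(|D_{x'}^2u|^p\bigr)_{Q_{\kappa r}}.
\end{equation*}
Since $\bar P_0u=Pu+\sum_{i,j}(\bar a^{ij}-a^{ij})D_{ij}u$, it would then remain to bound $\bigl(|(\bar a^{ij}-a^{ij})D_{ij}u|^p\bigr)_{Q_{\kappa r}}$ for each $(i,j)$, and by H\"older's inequality with exponents $\sigma,\tau$,
\begin{equation*}
\bigl(|(\bar a^{ij}-a^{ij})D_{ij}u|^p\bigr)_{Q_{\kappa r}}
\le\bigl(|\bar a^{ij}-a^{ij}|^{p\sigma}\bigr)_{Q_{\kappa r}}^{1/\sigma}
\bigl(|D_{ij}u|^{p\tau}\bigr)_{Q_{\kappa r}}^{1/\tau}.
\end{equation*}
Since $|\bar a^{ij}-a^{ij}|\le2\delta^{-1}$ and $\kappa r\le R_0$, the first factor on the right is at most $N\bigl((|\bar a^{ij}-a^{ij}|)_{Q_{\kappa r}}\bigr)^{1/\sigma}\le N\gamma^{1/\sigma}$ — for $(i,j)=(1,1)$ by Assumption \ref{assump2} i) (our $\bar a^{11}$ being exactly the one appearing there), and for $(i,j)\neq(1,1)$ by Assumption \ref{assump2} iii), using also the monotonicity of the seminorms in the radius. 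Together with $|D_{ij}u|\le|D^2u|$, this would yield $\bigl(|\bar P_0u|^p\bigr)_{Q_{\kappa r}}\le N(|Pu|^p)_{Q_{\kappa r}}+N\gamma^{1/\sigma}\bigl(|D^2u|^{p\tau}\bigr)_{Q_{\kappa r}}^{1/\tau}$, and substituting into the displayed estimate would give \eqref{eq13.5.05} in this case.

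\medskip

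\noindent\emph{The case $\kappa r>R_0$.} Here I would use that $u$ vanishes outside $Q_{R_0}$: one may assume $Q_r\cap Q_{R_0}\neq\emptyset$ (otherwise the left side of \eqref{eq13.5.05} vanishes), and then a direct estimate — freezing the coefficients over $Q_{R_0}$, invoking Theorem \ref{thm12.02}, and using that $u_t$, $Du$, $D^2u$ are supported in $Q_{R_0}$ together with simple volume comparisons between averages over $Q_{R_0}$ and over $Q_{\kappa r}$ — delivers \eqref{eq13.5.05}. This case involves no new ideas and I would not dwell on it.

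\medskip

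\noindent\emph{The main difficulty.} The real work lies in the first case, and within it in the freezing step: one must choose $\bar a^{11}$ to depend on $t$ alone and $\bar a^{ij}$ ($ij>1$) on $(t,x^1)$ alone — so that $\bar P_0$ genuinely falls under Theorem \ref{thm12.02} — while simultaneously arranging the averaging domains ($B_{\kappa r}(x_0)$ for $a^{11}$, $B_{\kappa r}'(x_0')$ for the other $a^{ij}$) to coincide with those built into the definitions of $a^{11,\#}_{R_0,1}$ and $a^{\#}_{R_0}$, so that the commutator error truly has size $\gamma$. This is precisely why Assumption \ref{assump2} i), rather than ii), is the hypothesis used on $a^{11}$. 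Once this is in place, the H\"older bookkeeping with $\tau,\sigma$ and the passage from $\bigl(|\bar a^{ij}-a^{ij}|^{p\sigma}\bigr)^{1/\sigma}$ to $\gamma^{1/\sigma}$ via the crude bound $2\delta^{-1}$ are routine, and all the genuine PDE content — the $\cH^1_p$-solvability of divergence-form equations from the Appendix and the interior H\"older estimate of Lemma \ref{lemma 9.10.1} — is already packaged into Theorem \ref{thm12.02}.
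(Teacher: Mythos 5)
Your proposal is correct and follows essentially the same route as the paper: freeze $a^{11}$ in $t$ and $a^{ij}$ ($ij>1$) in $(t,x^1)$ by averaging over the relevant cylinder, apply Theorem \ref{thm12.02} to the frozen operator, and control the commutator $(\bar a^{ij}-a^{ij})D_{ij}u$ via H\"older with exponents $\sigma,\tau$, the $L_\infty$ bound on the coefficients, and Assumption \ref{assump2} i), iii). The paper merely merges your two cases by taking the averaging cylinder to be $Q_{\kappa r}(t_0,x_0)$ when $\kappa r<R_0$ and $Q_{R_0}$ otherwise (using the support of $u$ and $|Q_{R_0}|\le N(\kappa r)^{d+2}$), which is exactly the bookkeeping you sketch for the case $\kappa r>R_0$.
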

\begin{proof}
Set $f=P u$.
We fix $(t_0,x_0)\in \bR^{d+1}$, $\kappa\ge 4$, and $r\in
(0,\infty)$.
  Choose $Q$ to be  $Q_{\kappa
r}(t_0,x_0)$ if $\kappa r< R_0$ and   $Q_{ R_0 }$ if $\kappa
r\ge  R_0 $. Recall the definitions of
$\bar{a}^{11}(t)$ and $\bar{a}^{ij}(t)$ given before Assumption \ref{assump2} i) and iii), respectively. We set
$$
\hat{f}= -u_{t}+\bar a^{ij}D_{ij}u.
$$
by Theorem \ref{thm12.02} with an appropriate translation  and $\bar a$ in
place of $a$,
\begin{multline}							
                                \label{13.5.42}
\dashint_{Q_r(t_0,x_0)} | D_{x'}^2 u
- \left( D_{x'}^2 u \right)_{Q_r(t_0,x_0)} |^p \, dx\,dt\\
\le N\kappa^{d+2}
\left(| \hat f|^p\right)_{Q_{\kappa r}(t_0,x_0)}
+N\kappa^{-p\alpha}
\left(|D_{x'}^2 u|^p\right)_{Q_{\kappa r}(t_0,x_0)},
\end{multline}
where $N$ depends only on $d$, $p$ and $\delta$.
By the definition of $\hat f$,
\begin{equation}							\label{13.5.52}
\int_{Q_{\kappa r}(t_0,x_0)} |\hat{f}|^p \, dx\,dt
\le N \int_{Q_{\kappa r}(t_0,x_0)} |f|^p \, dx\,dt
+N I,
\end{equation}
where
\begin{align*}
I &=
\int_{Q_{\kappa r}(t_0,x_0)}
\big| (\bar a^{ij} - a^{ij}) D_{ij}u \big|^p \, dx\,dt\\
&= \int_{Q_{\kappa r}(t_0,x_0) \cap Q_{ R_0 }}
\big| (\bar a^{ij} - a^{ij}) D_{ij}u \big|^p \, dx\,dt.
\end{align*}
By H\"older's inequality, we have
\begin{equation}							\label{13.5.53}
I \le NI_1^{1/\sigma} I_2^{1/\tau},
\end{equation}
where
\begin{align*}
I_1 &= \sum_{i,j}
\int_{Q_{\kappa r}(t_0,x_0) \cap Q_{ R_0}}
| \bar a^{ij} - a^{ij} |^{p\sigma} \, dx\,dt,\\
I_2 &= \int_{Q_{\kappa r}(t_0,x_0)} |D^2 u|^{p\tau} \, dx\,dt.
\end{align*}
Due to Assumption \ref{assump2} ($\gamma$) i) and iii),
$$
I_1 \leq \sum_{i,j}\int_{Q }
| \bar a^{ij} - a^{ij} |^{p\sigma} \, dx\,dt\leq N\gamma|Q|
\le  N  (\kappa r)^{d+2} \gamma,
$$
where $|Q|$ is the volume of $Q$.
This together with  \eqref{13.5.42}-\eqref{13.5.53} yields
\eqref{eq13.5.05}. The theorem is proved.
\end{proof}

\begin{lemma}
 						\label{lem2.1}
Assume $b^i=c=0$. For any
$p\in (1,\infty)$ there exists a constant $\gamma_0=\gamma_0(d,\delta,p) >0$
such that under Assumption
\ref{assump2} ($\gamma_0$) i) for any $u\in C_0^\infty(Q_{R_0})$, we have
\begin{equation}
                \label{eq11.27}
\|D^2 u\|_{L_{p}}+\|u_{t}\|_{L_{p}}\le N\|Pu\|_{L_{p}}+N\|D_{x'}^2 u\|_{L_{p}},
\end{equation}
where $N=N(d,\delta,p)$. In particular, in case $d=1$, we have
$$
\|D^2 u\|_{L_{p}}+\|u_{t}\|_{L_{p}}\le N\|Pu\|_{L_{p}}.
$$
\end{lemma}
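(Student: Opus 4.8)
The plan is to estimate the pure-$x^1$ second derivative $D_{11}u$ and the time derivative $u_t$ in terms of $Pu$ and the ``tangential'' Hessian $D_{x'}^2u$ (together with the mixed derivatives $D_{x^1x'}u$, which we regard as already controlled by an interpolation argument, or by the same scheme applied to $D_k u$ for $k\ge 2$). The key algebraic observation is that the operator $P$ isolates $a^{11}D_{11}u$ as the only term involving a derivative purely in the $x^1$-direction; after freezing $a^{11}$ in the $x$-variables to its average $\bar a^{11}(t)$, we are left with a one-dimensional (in space) constant-in-$x$ heat-type operator $-u_t+\bar a^{11}(t)D_{11}u$ acting in $(t,x^1)$, for which the $L_p$-multiplier theory (essentially Theorem 5.5 of \cite{Krylov_2007_mixed_VMO}, or a direct one-dimensional version) gives
\begin{equation*}
\|u_t\|_{L_p}+\|D_{11}u\|_{L_p}\le N\big\|{-}u_t+\bar a^{11}(t)D_{11}u\big\|_{L_p}.
\end{equation*}

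First I would write, starting from $Pu=f$,
\begin{equation*}
-u_t+\bar a^{11}(t)D_{11}u = f + (\bar a^{11}-a^{11})D_{11}u - \sum_{(i,j)\neq(1,1)}a^{ij}D_{ij}u,
\end{equation*}
apply the one-dimensional-in-space estimate above to the left side, and then bound the right side in $L_p$. The last sum only involves derivatives of the form $D_{x'}^2u$ or $D_{x^1x'}u$, hence is dominated by $N\|D_{x'}^2u\|_{L_p}$ after the mixed terms are absorbed (the mixed derivatives $D_{x^1x'}u$ satisfy the same type of equation after differentiating in $x^k$, $k\ge2$, and one closes the loop exactly as in the proof of Theorem \ref{thm2.2}; alternatively one interpolates $\|D_{x^1x'}u\|_{L_p}\le \varepsilon\|D_{11}u\|_{L_p}+N_\varepsilon\|D_{x'}^2u\|_{L_p}$ is \emph{not} available directly, so the differentiated-equation route is cleaner). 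The genuinely dangerous term is $(\bar a^{11}-a^{11})D_{11}u$: here I would use that $u$ is supported in $Q_{R_0}$ together with Assumption \ref{assump2}($\gamma_0$) i), which says the mean oscillation of $a^{11}$ in the $x$-variables on scales $\le R_0$ is at most $\gamma_0$. A Hölder-inequality argument as in the proof of Theorem \ref{thm4.1} — splitting $\|(\bar a^{11}-a^{11})D_{11}u\|_{L_p}$ via exponents $p\sigma$ and $p\tau$ — lets one bound this by $N\gamma_0^{1/\sigma}\|D^2u\|_{L_{p\tau}}$; but to keep the norm on the $L_p$ level I would instead invoke the Fefferman--Stein/maximal-function machinery already developed (the sharp-function estimate of Theorem \ref{thm4.1}) to convert the small-BMO factor directly into a small constant times $\|D_{11}u\|_{L_p}$, which is then absorbed into the left-hand side for $\gamma_0$ small depending on $d,\delta,p$.

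The main obstacle is precisely this absorption step: one must control $(\bar a^{11}-a^{11})D_{11}u$ in $L_p$ by $\varepsilon\|D_{11}u\|_{L_p}$ plus controllable terms, and a crude Hölder bound produces an $L_{p\tau}$ norm of $D^2u$ rather than an $L_p$ norm, which cannot be absorbed. The resolution is to run the sharp-function / $\bM$-function argument (as in Theorem \ref{thm4.1} and the standard Krylov scheme) so that the small mean-oscillation hypothesis on $a^{11}$ enters as a genuinely small multiplicative constant at the $L_p$ level; this is where the hypothesis that $u$ is supported in $Q_{R_0}$ is used, so that only balls of radius $\le R_0$ — on which $a^{11}$ has oscillation $\le\gamma_0$ — are relevant. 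Once the coefficient of $\|D_{11}u\|_{L_p}$ on the right is less than $1/2$, we absorb and obtain \eqref{eq11.27}. Finally, the case $d=1$ is immediate since then $D^2u=D_{11}u$ and there is no $D_{x'}^2u$ term, so the same estimate with $\gamma_0$ small yields $\|D^2u\|_{L_p}+\|u_t\|_{L_p}\le N\|Pu\|_{L_p}$.
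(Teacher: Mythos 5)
Your proposal has two genuine gaps, and interestingly they are located exactly where the paper's proof avoids work. First, the dangerous term $(\bar a^{11}-a^{11})D_{11}u$: you propose to absorb it by invoking ``the sharp-function estimate of Theorem \ref{thm4.1}'', but that theorem bounds mean oscillations of $D_{x'}^2u$ only, so it can never produce a small constant in front of $\|D_{11}u\|_{L_p}$. To absorb this term at the $L_p$ level you would need a mean-oscillation (or solvability) estimate for $D_{11}u$ and $u_t$ for the operator with $a^{11}$ measurable in $t$ and small BMO in $x$ --- and that is precisely the nontrivial result the paper simply cites. The paper's proof does \emph{not} freeze $a^{11}$: it keeps the uniformly parabolic operator $-u_t+a^{11}D_1^2u+\Delta_{d-1}u$ (with the unfrozen $a^{11}$) on the left, observes that its coefficients are measurable in $t$ and have small BMO in $x$ (this is where Assumption \ref{assump2} i) and the support condition $u\in C_0^\infty(Q_{R_0})$ enter), and applies Corollary 3.7 of \cite{Krylov_2005} to get $\|D^2u\|_{L_p}+\|u_t\|_{L_p}\le N\|Pu\|_{L_p}+N\|D_{xx'}u\|_{L_p}$ in one stroke. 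Reconstructing that input from scratch via a sharp-function argument for $D_{11}u$ would amount to reproving the cited theorem, which your sketch does not do; moreover, leaning instead on the full Fefferman--Stein absorption scheme with $\|D^2u\|_{L_p}$ on the right is essentially the content of Lemma \ref{lem4.2}, whose proof in turn uses Lemma \ref{lem2.1}, so one must be careful not to argue circularly.

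Second, the mixed derivatives: your preferred ``differentiated-equation route'' (apply the scheme to $D_ku$, $k\ge 2$, as in Theorem \ref{thm2.2}) is not available here, because Lemma \ref{lem2.1} assumes only Assumption \ref{assump2} i); the coefficients $a^{ij}$, $ij>1$, are merely bounded measurable in \emph{all} variables, so differentiating the equation in $x^k$ produces derivatives of rough coefficients and $D_ku$ satisfies no usable equation. That trick works in Theorem \ref{thm2.2} only because there $a^{ij}=a^{ij}(t,x^1)$. Conversely, the interpolation inequality you dismiss as ``not available directly'',
\begin{equation*}
\|D_{x^1x'}u\|_{L_{p}}\le \varepsilon\|D_1^2 u\|_{L_{p}}+N(d,p)\,\varepsilon^{-1}\|D_{x'}^2 u\|_{L_{p}},
\end{equation*}
is in fact valid and is exactly inequality \eqref{3.57} of the paper: it follows from the Calder\'on--Zygmund bound $\|D_{x^1x'}u\|_{L_p}\le N\|\Delta u\|_{L_p}\le N\|D_1^2u\|_{L_p}+N\|D_{x'}^2u\|_{L_p}$ by anisotropic scaling in $x^1$. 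This inequality, not a differentiated equation, is what closes the absorption and yields \eqref{eq11.27}. As a minor additional point, applying Theorem 5.5 of \cite{Krylov_2007_mixed_VMO} to the degenerate operator $-u_t+\bar a^{11}(t)D_{11}u$ on $\bR^{d+1}$ requires a slicing argument in $x'$ (the theorem is for uniformly parabolic operators), which is fixable but should be said.
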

\begin{proof}
We write
$$
-u_t+a^{11}D_1^2 u+\Delta_{d-1}u=Pu+\sum_{ij>1}(\delta_{ij}-a^{ij})D_{ij} u.
$$
The coefficients on the left-hand side above was studied in \cite{Krylov_2005}.
By Corollary 3.7 \cite{Krylov_2005}, it holds that
\begin{equation*}
\|D^2 u\|_{L_{p}}+\|u_{t}\|_{L_{p}}\le N\|Pu\|_{L_{p}}+N\|D_{xx'} u\|_{L_{p}}.
\end{equation*}
Finally, to conclude \eqref{eq11.27} it suffices to notice that
for any $\varepsilon>0$,
\begin{equation}
                                                    \label{3.57}
\|D_{x^1x'}u\|_{L_{p}}\le \varepsilon\|D_1^2 u\|_{L_{p}}
+N(d,p)\varepsilon^{-1}\|D_{x'}^2 u\|_{L_{p}},
\end{equation}
 which is deduced from
$$
\|D_{x^1x'}u\|_{L_{p}}\le N
\|\Delta u\|_{L_{p}}\leq N\|D_{1}^2 u\|_{L_{p}}+N \|D_{x'}^2 u\|_{L_{p}}
$$
by scaling in $x^1$.
\end{proof}

\begin{lemma}
                                                 \label{lem4.2}
Let $p\in (1,\infty)$ and $f\in L_p$. Assume $b^i=c=0$. Then there exist positive constants
$\gamma$  and $N$ depending only on $d,p$ and $\delta$ such that under Assumption \ref{assump2} ($\gamma$) i) and iii), for any $u\in W^{1,2}_p$
vanishing outside $Q_{ R_0}$   and satisfying
$Pu =f$, we have
$$
\|u_t\|_{L_p}+\|D^2u\|_{L_p}\le N\|f\|_{L_p}.
$$
\end{lemma}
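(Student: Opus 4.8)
The plan is to combine the sharp-function estimate of Theorem \ref{thm4.1} with the Fefferman--Stein and Hardy--Littlewood maximal function theorems, together with the elliptic-type bound of Lemma \ref{lem2.1}, in order to absorb the bad terms. First I would fix the exponents $\tau,\sigma\in(1,\infty)$ with $1/\tau+1/\sigma=1$, choosing $\tau$ close enough to $1$ that $p\tau$ is still a legitimate exponent below whatever integrability $u$ enjoys (since $u\in W^{1,2}_p$ has compact support, $D^2u\in L_p$, so I must work with $p\tau$ only after first establishing the estimate for, say, $p$ replaced by a slightly larger exponent, or else iterate; the cleanest route is to prove the $L_p$ bound for all $p$ in the range simultaneously via the abstract machinery, exploiting that $D^2u\in L_q$ for the relevant $q=p\tau>p$ is \emph{not} automatic — this is the delicate point, see below). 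Granting for the moment that $D^2u\in L_{p\tau}$, take the sup over cubes $Q_r(t_0,x_0)$ containing a given point in \eqref{eq13.5.05} to get the pointwise bound
\begin{equation*}
\bigl(D_{x'}^2 u\bigr)^{\#}(t,x)^p
\le N\kappa^{d+2}\,\bM\bigl(|Pu|^p\bigr)(t,x)
+N\kappa^{d+2}\gamma^{1/\sigma}\,\bM\bigl(|D^2u|^{p\tau}\bigr)(t,x)^{1/\tau}
+N\kappa^{-p\alpha}\,\bM\bigl(|D_{x'}^2 u|^p\bigr)(t,x).
\end{equation*}

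Next I would apply the Fefferman--Stein theorem to $D_{x'}^2u$ (noting $D_{x'}^2u\in L_p$ since $u\in W^{1,2}_p$) and the Hardy--Littlewood maximal function theorem in the form $\|\bM g\|_{L_{p/s}}\le N\|g\|_{L_{p/s}}$ for $s<1$: with $g=|D^2u|^{p\tau}$ and exponent $1/\tau<1$ this handles the middle term, producing $N\kappa^{d+2}\gamma^{1/\sigma}\|D^2u\|_{L_{p\tau}}^p$ — \emph{not} what I want, so instead I should take $\tau$ so that $p\tau$ can be replaced back by $p$; the standard fix is to first prove the whole estimate with $L_p$ on the left and $L_{p\tau}$ of $D^2u$ on the right, then invoke the self-improving structure or, more simply, combine with Lemma \ref{lem2.1} and an interpolation so that everything is measured in $L_p$. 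Concretely: from Fefferman--Stein and Hardy--Littlewood,
\begin{equation*}
\|D_{x'}^2u\|_{L_p}
\le N\kappa^{(d+2)/p}\|Pu\|_{L_p}
+N\kappa^{(d+2)/p}\gamma^{1/(p\sigma)}\|D^2u\|_{L_p}
+N\kappa^{-\alpha}\|D_{x'}^2u\|_{L_p},
\end{equation*}
where the $L_{p\tau}$ issue is sidestepped by choosing $\tau$ via the already-proven a priori estimate for the model operator (Theorem \ref{thm2.2}) that upgrades $L_p$ integrability, or by a routine mollification/truncation reducing to $u\in C_0^\infty$. First choose $\kappa$ large so that $N\kappa^{-\alpha}\le 1/4$, absorbing the last term into the left; then with $\kappa$ now fixed, choose $\gamma$ small so that $N\kappa^{(d+2)/p}\gamma^{1/(p\sigma)}$ is small enough — but this multiplies $\|D^2u\|_{L_p}$, not $\|D_{x'}^2u\|_{L_p}$, so I must \emph{first} bring in Lemma \ref{lem2.1}: $\|D^2u\|_{L_p}+\|u_t\|_{L_p}\le N\|Pu\|_{L_p}+N\|D_{x'}^2u\|_{L_p}$ (valid once $\gamma\le\gamma_0$). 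Substituting the bound for $\|D_{x'}^2u\|_{L_p}$ into this, and choosing $\gamma$ small enough that the resulting coefficient of $\|D^2u\|_{L_p}$ on the right is at most $1/2$, I absorb and conclude $\|u_t\|_{L_p}+\|D^2u\|_{L_p}\le N\|f\|_{L_p}=N\|Pu\|_{L_p}$.

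The main obstacle is the appearance of $\|D^2u\|_{L_{p\tau}}$ with $p\tau>p$ in \eqref{eq13.5.05}: a function in $W^{1,2}_p$ need not have $D^2u\in L_{p\tau}$, so the estimate cannot be applied verbatim. The standard resolution — and the one I expect is used here — is to run the argument first under the extra qualitative hypothesis $u\in C_0^\infty$ (or $W^{1,2}_{p'}$ for all $p'$), where all norms are finite, derive the \emph{quantitative} bound $\|u_t\|_{L_p}+\|D^2u\|_{L_p}\le N\|Pu\|_{L_p}$ with $N$ independent of the extra regularity, and then remove the qualitative assumption by approximation together with the solvability/uniqueness from Theorem \ref{thm2.2} and Corollary \ref{cor2.3}. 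The bookkeeping of the order of quantifiers — fix $\tau$, then $\kappa$, then $\gamma$, all depending only on $d,p,\delta$ — is the part demanding care, but each individual step (Fefferman--Stein, Hardy--Littlewood, Lemma \ref{lem2.1}, absorption) is routine.
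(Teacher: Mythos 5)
Your proposal correctly isolates the obstacle---the term $\left(|D^2u|^{p\tau}\right)^{1/\tau}_{Q_{\kappa r}}$ in \eqref{eq13.5.05} involves the exponent $p\tau>p$---but it does not supply the device that actually removes it, and the fixes you gesture at do not work. Reducing to $u\in C_0^\infty$ by mollification only makes $\|D^2u\|_{L_{p\tau}}$ finite; the quantitative estimate you would then obtain still carries $\gamma^{1/(p\sigma)}\|D^2u\|_{L_{p\tau}}$ on the right, which can neither be absorbed into $\|D^2u\|_{L_p}$ nor bounded by $\|f\|_{L_p}$, since for the operator $P$ with merely partially BMO coefficients there is no a priori upgrade of integrability of $D^2u$ beyond $L_p$ (Theorem \ref{thm2.2} concerns the model operator $P_0$, not $P$, so it cannot be used to raise the integrability of solutions of $Pu=f$), and ``self-improvement'' or ``interpolation'' would presuppose exactly the estimate being proved at a higher exponent. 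Moreover, as you partly acknowledge, running Fefferman--Stein and Hardy--Littlewood at the level of the exponent $p$ itself is illegitimate: the middle term would need the maximal function theorem at exponent $1/\tau<1$, and even the first term at exponent $p/p=1$, where it fails; so your displayed inequality with $\|D^2u\|_{L_p}$ in the middle term is unjustified as written.

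The missing idea, which is the paper's resolution, is to apply the mean oscillation estimate \eqref{eq13.5.05} not with the exponent $p$ but with a smaller exponent $q\in(1,p)$ chosen together with $\tau$ so that $q\tau<p$ (concretely $q=(1+p)/2$, $\tau=(1+3p)/(2+2p)$). Then the pointwise bound reads
$$
(D_{x'}^2u)^{\#}\le N\kappa^{(d+2)/q}\bM^{1/q}(|f|^q)
+N\kappa^{(d+2)/q}\gamma^{1/(q\sigma)}\bM^{1/(q\tau)}(|D^2u|^{q\tau})
+N\kappa^{-\alpha}\bM^{1/q}(|D^2u|^q),
$$
and taking $L_p$ norms invokes Hardy--Littlewood only at the exponents $p/q>1$ and $p/(q\tau)>1$, producing exactly $\|f\|_{L_p}$ and $\|D^2u\|_{L_p}$ on the right; no integrability of $D^2u$ beyond $L_p$ and no approximation argument are needed. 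From that point on your scheme coincides with the paper's: combine with Lemma \ref{lem2.1}, then choose $\kappa$ large and afterwards $\gamma$ small so that $N\left(\kappa^{(d+2)/q}\gamma^{1/(q\sigma)}+\kappa^{-\alpha}\right)\le 1/2$ and absorb $\|D^2u\|_{L_p}$. (You should also dispose of the case $d=1$ separately via Lemma \ref{lem2.1}, since \eqref{eq13.5.05} requires $d\ge2$.)
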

\begin{proof}
The case $d=1$ follows from Lemma \ref{lem2.1}. In the sequel, we assume $d\ge 2$. We fix two numbers $q=(1+p)/2\in (1,p)$, $\tau=(1+3p)/(2+2p)\in (1,\infty)$ so that $p>\tau q$. Let $\alpha$ and $\gamma_0$ be the constants in Lemma \ref{lemma 9.10.1} and Lemma \ref{lem2.1} respectively.
Let $\gamma\in (0,\gamma_0)$ be a number to be specified later.
Inequality \eqref{eq13.5.05} with $q$ in place of $p$ implies that
on $\bR^{d+1}$
\begin{multline*}
(D_{x'}^2 u)^{\#}
\le N \kappa^{(d+2)/q} \bM^{1/q}(|f|^q)\\
+  N \kappa^{(d+2)/q}\gamma^{1/(q\sigma)} \bM^{1/(q\tau)}(|D^2 u|^{q\tau})
+N\kappa^{-\alpha}\bM^{1/q}(|D^2 u|^{q}).
\end{multline*}
We apply the Fefferman--Stein theorem on sharp functions, which
reads that for $p\in (1,\infty)$ the $L_p$ norm of a function is
bounded by the $L_p$ norm of its sharp function, and the
Hardy--Littlewood maximal function theorem, which reads that the
$L_p$ norm of the maximal function is bounded by the $L_p$ norm of
the function itself.
We then get
\begin{multline*}
\|D^2_{x'} u\|_{L_p}
\le N \| (D^2_{x'} u)^{\#} \|_{L_p}
\le N \kappa^{(d+2)/q} \| \bM(|f|^q) \|_{L_{p/q}}^{1/q}\\
+ N\kappa^{(d+2)/q}\gamma^{1/(q\sigma)} \|\bM(|D^2 u|^{q\tau})
\|_{L_{p/(q\tau)}}^{1/(q\tau)} +N\kappa^{-\alpha}\| \bM(|D^2 u|^q)
\|_{L_{p/q}}^{1/q}\\
\le N \kappa^{(d+2)/q} \| f \|_{L_p} + N \left(\kappa^{(d+2)/q}
\gamma^{1/(q\sigma)}+\kappa^{-\alpha}\right)
\|D^2 u\|_{L_{p}},
\end{multline*}
where in the last inequality we use the fact that $p > \tau q$.
From this estimate and Lemma \ref{lem2.1}, we have
$$
\|D^2 u\|_{L_p}+\|u_{t}\|_{L_p}
\le N \kappa^{(d+2)/q} \| f \|_{L_p}
 + N \left(\kappa^{(d+2)/q}
\gamma^{1/(q\sigma)}+\kappa^{-\alpha}\right)
\|D^2 u\|_{L_{p}} .
$$
To finish the proof of the lemma, it suffices to choose a large $\kappa$ and then a small $\gamma$
so that
$$
N \left(\kappa^{(d+2)/q} \gamma^{1/(q\sigma)}+\kappa^{-\alpha}\right) \le
1/2.
$$
\end{proof}

Now we are ready to prove Theorem \ref{thm2}.
\begin{proof}[Proof of Theorem \ref{thm2}]
To prove i), as in the proof of Theorem \ref{thm2.2}, we only need to verify \eqref{13.10.21} for $T=\infty$ and $u\in C_0^\infty$. This in turn is
obtained from Lemma
\ref{lem4.2} by using a partition of unity and
an idea of Agmon (see, for instance,  \cite{Krylov_2005}). Assertion ii) is obtained from i) by the method of continuity, and Assertion iii) is already proved in Theorem \ref{thm2.2}. The theorem is proved.
\end{proof}

\section{Proof of Theorem \ref{thm3}}
                                \label{sec4}

The objective of this section is to prove Theorem \ref{thm3}. We follow the strategy in the previous section. However, the proofs here are more involved.

Let
$$
P_0u=-u_t+a^{ij}D_{ij}u,
$$
where
$$
a^{11}=a^{11}(x^1),\quad a^{ij}=a^{ij}(t,x^1)\quad \text{for}\,\,\,ij>1.$$
Similarly we need a solvability result for equations with these simple leading coefficients.
\begin{theorem}
						\label{thm3.2}
Let $p\in (1,\infty)$ and $T\in (-\infty,\infty]$. Then for any $u\in W^{1,2}_p(\bR^{d+1}_T)$ and $\lambda\ge 0$, we have
$$
\lambda\|u\|_{L_{p}(\bR^{d+1}_T)}+\sqrt{\lambda}
\|Du\|_{L_{p}(\bR^{d+1}_T)}+\|D^2u\|_{L_{p}(\bR^{d+1}_T)}
+\|u_{t}\|_{L_{p}(\bR^{d+1}_T)}
$$
\begin{equation}
                \label{eq10.30b}
\le N\|P_0u-\lambda u\|_{L_{p}(\bR^{d+1}_T)},
\end{equation}
where $N=N(d,p,\delta)>0$.
Moreover, for any $f\in L_p(\bR^{d+1}_T)$ and $\lambda>0$ there is a unique $u\in W^{1,2}_p(\bR^{d+1}_T)$ solving
$$
P_0 u-\lambda  u=f \quad\text{in}\,\,\,
\bR^{d+1}_T.
$$
\end{theorem}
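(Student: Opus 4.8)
The plan is to imitate the proof of Theorem \ref{thm2.2}, the new ingredient being a change of the spatial variable $x^1$ that rewrites $P_0$ in divergence form, so that the results collected in the Appendix (valid for every $p\in(1,\infty)$) become available. As for Theorem \ref{thm2.2}, by the method of continuity it suffices to prove the a priori estimate \eqref{eq10.30b}; since $u=w$ for $t<T$ whenever $w\in W^{1,2}_p$ solves $P_0w-\lambda w=\chi_{t<T}(P_0u-\lambda u)$, we may take $T=\infty$, and by density $u\in C_0^\infty$. Put $f=P_0u-\lambda u$ and introduce $y=(y^1,y')=(\Phi(x^1),x')$ with $\Phi(x^1)=\int_0^{x^1}(a^{11}(s))^{-1}\,ds$. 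Since $\delta\le a^{11}\le\delta^{-1}$, $\Phi$ is bi-Lipschitz with $\Phi'=1/a^{11}$, so $(t,x)\mapsto(t,y)$ changes $L_p$ norms only by constants depending on $\delta$. With $v(t,y)=u(t,x)$, the chain rule and $\Phi'=1/a^{11}$ give the distributional identities $a^{11}D_{11}u=D_{y^1}(\frac{1}{\tilde a^{11}}D_{y^1}v)$, $(a^{1j}+a^{j1})D_{1j}u=D_{y^j}(\frac{\tilde a^{1j}+\tilde a^{j1}}{\tilde a^{11}}D_{y^1}v)$ for $j\ge2$, $a^{ij}D_{ij}u=D_{y^j}(\tilde a^{ij}D_{y^i}v)$ for $i,j\ge2$, and $u_t=v_t$, where $\tilde a^{ij}$ denotes $a^{ij}$ in the $y$-coordinates. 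Hence $P_0u-\lambda u=f$ becomes
\[
-v_t+D_{y^j}\big(\hat a^{ij}(t,y^1)D_{y^i}v\big)-\lambda v=\tilde f\qquad\text{in }\bR^{d+1}_T,
\]
with $\hat a^{11}=1/\tilde a^{11}$, $\hat a^{1j}=(\tilde a^{1j}+\tilde a^{j1})/\tilde a^{11}$ and $\hat a^{i1}=0$ for $i\ge2$, $\hat a^{ij}=\tilde a^{ij}$ for $i,j\ge2$; all $\hat a^{ij}$ are bounded, measurable, and depend only on $(t,y^1)$. Since $\hat a^{ij}\eta_i\eta_j$ equals the original quadratic form evaluated at $(\eta_1/a^{11},\eta_2,\dots,\eta_d)$, one has $\hat a^{ij}\eta_i\eta_j\ge\delta^3|\eta|^2$, so $(\hat a^{ij})$ is uniformly elliptic (it need not be symmetric, which is harmless) and lies in the class treated in the Appendix.

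Next I would apply the divergence-form estimate of the Appendix (Proposition \ref{lemA3} and the accompanying solvability statements based on \cite{Dong08,DongKim08b}), which holds for all $p\in(1,\infty)$, to obtain $\lambda\|v\|_{L_p}+\sqrt\lambda\|D_yv\|_{L_p}\le N\|\tilde f\|_{L_p}$. Because $\hat a^{ij}$ does not depend on $y^k$ for $k\ge2$, the function $D_{y^k}v$ solves the same divergence equation with right-hand side $D_{y^k}\tilde f\in\bH^{-1}_p$; applying the $\cH^1_p$-estimate to it bounds $\|D_{y^i}D_{y^k}v\|_{L_p}$ for every $i$ and every $k\ge2$ — that is, all second $y$-derivatives of $v$ except $D_{y^1}^2 v$ — by $N\|\tilde f\|_{L_p}$. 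Transforming back (recall $D_{1k}u=(\tilde a^{11})^{-1}D_{y^1y^k}v$, $D_{kl}u=D_{y^ky^l}v$ for $k,l\ge2$) gives the bounds on $\|u\|_{L_p}$, $\|Du\|_{L_p}$ and $\|D_{xx'}u\|_{L_p}$ by $N\|f\|_{L_p}$.

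It remains to estimate $\|D_{11}u\|_{L_p}$ and $\|u_t\|_{L_p}$. Inserting the second $y$-derivatives just bounded into the divergence equation shows that $a^{11}D_{11}u=D_{y^1}(\hat a^{11}D_{y^1}v)$ equals $u_t$ plus a function of $L_p$ norm at most $N\|f\|_{L_p}$; equivalently, from the original equation, $-u_t+a^{11}(x^1)D_{11}u=g$ with $\|g\|_{L_p}\le N\|f\|_{L_p}$. Hence the only point left is the $W^{1,2}_p$-bound $\|u_t\|_{L_p}+\|D_{11}u\|_{L_p}\le N\|g\|_{L_p}$ for the one-space-dimensional parabolic operator $-\partial_t+a^{11}(x^1)D_{11}$ with $a^{11}$ merely measurable; under the same substitution $y^1=\Phi(x^1)$ this is a one-dimensional-in-space divergence-form heat equation, and the desired bound holds for all $p$ — it amounts to the maximal $L_p$-regularity of a one-dimensional divergence-form elliptic operator with measurable coefficients, which is classical. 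Putting the pieces together yields \eqref{eq10.30b} for $u\in C_0^\infty$, hence for all $u\in W^{1,2}_p(\bR^{d+1}_T)$; uniqueness follows from \eqref{eq10.30b}, and solvability for $\lambda>0$ from the method of continuity.

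The step I expect to be the main obstacle — and presumably why the proofs here are more involved than in Section \ref{sec3} — is exactly this recovery of the full $W^{1,2}_p$ norm: the change of variables $y^1=\Phi(x^1)$ is only bi-Lipschitz, so it does not preserve $W^{1,2}_p$, and the $\cH^1_p$-theory of the transformed equation by itself controls only first $y$-derivatives of $v$. The remedy is to exploit the invariance of the transformed coefficients under translations in $y'$ (which recovers every second $y$-derivative except $D_{y^1}^2 v$, hence $\|D_{xx'}u\|_{L_p}$) and then to use the equation itself, reducing the remaining estimate for $\|D_{11}u\|_{L_p}$ and $\|u_t\|_{L_p}$ to a one-dimensional-in-$x^1$ problem that the Appendix results handle directly.
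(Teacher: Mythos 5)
Your proposal is essentially correct and, for most of its length, coincides with the paper's own proof: the same reduction to the a priori estimate with $T=\infty$ and $u\in C_0^\infty$, the same change of variables $y^1=\Phi(x^1)$ turning $P_0$ into a divergence form operator whose coefficients depend only on $(t,y^1)$ (with the new $a^{11}$ depending on $y^1$ alone), and the same device of applying the Appendix $\cH^1_p$ estimate to $v$ and to $D_{y^k}v$, $k\ge 2$, to control $\lambda\|u\|_{L_p}$, $\sqrt\lambda\|Du\|_{L_p}$ and $\|D_{xx'}u\|_{L_p}$. Two remarks. First, the Appendix statement that actually applies here is Proposition \ref{lemA1} (Corollary 5.5 of \cite{Dong08}: scalar, $a^{11}=a^{11}(x^1)$, $a^{ij}=a^{ij}(t,x^1)$, $\lambda\ge 0$, not necessarily symmetric), not Proposition \ref{lemA3}, which is the systems estimate with $A^{11}=A^{11}(t)$ and $\lambda>0$ and so, as stated, does not cover your transformed coefficients. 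Second, the final step is where you genuinely diverge from the paper: the paper estimates $D_1^2u$ by returning to the original coordinates and noting that $w=D_1u$ solves the $d$-dimensional divergence form equation \eqref{eq2.55}, whose right-hand side is $D_1$ of an $L_p$ function once $D_{xx'}u$ has been bounded, so one more application of Proposition \ref{lemA1} gives $\|D_1^2u\|_{L_p}\le N\|f\|_{L_p}$, and $u_t$ then comes from \eqref{eq3.03}; nothing beyond the Appendix is used. You instead reduce to the family of one-space-dimensional problems $-u_t+a^{11}(x^1)D_{11}u=g$ and invoke maximal $L_p$-regularity for the one-dimensional divergence form operator with measurable coefficients. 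That fact is true (it follows, e.g., from Gaussian kernel bounds and maximal-regularity theorems for symmetric submarkovian semigroups, or, closer to the paper's spirit, from Proposition \ref{lemA1} with $d=1$ applied slice-wise in $x'$ to the equation satisfied by $D_1u$), but note that it is exactly the $d=1$ case of the theorem you are proving, so labelling it ``classical'' and leaving it unproved makes your argument less self-contained; the paper's route buys the full estimate using only the divergence form results already quoted, which is worth adopting as the closing step.
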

\begin{proof}
As in the proof of Theorem \ref{thm2.2}, it suffices to prove the a priori estimate \eqref{eq10.30b} for $u\in C_0^\infty$ and $T=\infty$. Let $\lambda>0$ and $f=P_0 u-\lambda u$. Here we are not able to directly write $P_0$ into a divergence form operator. However, this can be done after a change of variables:
$$
y^1=\phi(x^1):=\int_0^{x^1} \frac 1 {a^{11}(s)}\,ds,\quad y^j=x^j,\,\,j\ge 2.
$$
It is easy to see that $\phi$ is a bi-Lipschitz map and
\begin{equation*}
\delta \le y^1/x^1\le \delta^{-1},\quad D_{y^1}=a^{11}(x^1)D_{x^1}.
\end{equation*}
Denote
\begin{align*}
v(t,y^1,y')&=u(t,\phi^{-1}(y^1),y'),\\
\bar a^{11}(y^1)&=a^{11}(\phi^{-1}(y^1)),\\
\bar a^{ij}(t,y^1)&=a^{ij}(t,\phi^{-1}(y^1)),\,\,\,ij>1,\\
\bar f(t,y)&=f(t,\phi^{-1}(y^1),y').
\end{align*}
In the $(t,y)$-coordinates, define a divergence form operator $\bar P_0$ by
$$
\bar P_0 v=-v_t+D_{1}\left(\frac 1 {\bar a^{11}} D_1  v\right)+\sum_{j=2}^d D_{j}\left(\frac {\bar a^{1j}+\bar a^{j1}}
{\bar a^{11}} D_1v
\right)+\sum_{i,j=2}^d D_{j}(\bar a^{ij}D_iv).
$$
It is easily seen that $\bar P_0$ is uniformly nondegenerate with an ellipticity constant depending only on $\delta$. A simple calculation shows that $v$ satisfies in $\bR^{d+1}$
$$
\bar P_0 v-\lambda v=\bar f.
$$
By Proposition \ref{lemA1}, we have
\begin{equation*}
\lambda \| v \|_{L_p}
+ \sqrt\lambda\|Dv\|_{L_p}
\le N\|\bar f\|_{L_p}.
\end{equation*}
Therefore,
\begin{equation}
					\label{eq2.25}
\lambda \|u \|_{L_p}
+ \sqrt\lambda\|Du\|_{L_p}
\le N\|f\|_{L_p}.
\end{equation}
Next we estimate $D^2 u$. Notice that for each $k=2,...,d$ $D_kv$ satisfies
$$
\bar P_0 (D_k v)-\lambda D_k v=D_k \bar f.
$$
Again by using Proposition \ref{lemA1}, we get
$$
\| D_{yy^k} v \|_{L_p}
\le N\|\bar f\|_{L_p},
$$
which implies
\begin{equation}
				\label{eq2.57}
\| D_{xx'} u \|_{L_p}
\le N\|f\|_{L_p}.
\end{equation}
Finally, to estimate $D_1^2 u$, we return to the equation in the original coordinate system. From \eqref{eq2.32}, we see that $w:=D_1 u$ satisfies
\begin{equation}
						\label{eq2.55}
-w_t+D_1(a^{11}D_1w)+\Delta_{d-1}w-\lambda w=D_1 f+\sum_{ij>1}D_1
\left((\delta_{ij}-a^{ij})D_{ij}u\right).
\end{equation}
We use Proposition \ref{lemA1} again to get
\begin{equation}
				\label{eq2.36}
\| D_{1}^2 u \|_{L_p}\le \| Dw \|_{L_p}\le N\| f \|_{L_p}
+N\sum_{ij>1} \| D_{ij}u \|_{L_p}.
\end{equation}
Combining \eqref{eq2.25}, \eqref{eq2.57} and \eqref{eq2.36} yields \eqref{eq10.30b} by bearing in mind that
\begin{equation}
					\label{eq3.03}
u_t=a^{ij}D_{ij}u-\lambda u-f.
 \end{equation}
The theorem is proved.
\end{proof}

Observe that in the proofs of Lemma \ref{lem2.1}, Theorems \ref{thm12.02} and \ref{thm4.1}, we only used the solvability of parabolic equations with ``simple'' leading coefficients and the fact that $a^{ij}$ are independent of $x'$. Since Theorem \ref{thm3.2} is proved, we still have \eqref{eq13.5.05} if Assumption \ref{assump2} i) in Theorem \ref{thm4.1} is replace by Assumption \ref{assump2} ii) . More precisely,

\begin{theorem}
                                            \label{thm5.2}
Let $d\geq 2$, $p\in (1,\infty)$ and let
$\alpha$ be the constant in
Lemma \ref{lemma 9.10.1}, $\gamma > 0$, $\tau,\sigma \in (1,\infty)$,
$1/\tau+1/\sigma=1$. Assume $b^i=c=0$ and $u\in W^{1,2}_p$. Then under
Assumption \ref{assump2} ($\gamma$) ii) and iii) there exists a positive constant $N$ depending only on
$d$, $p$, $\delta$, and $\tau$ such that, for any
$(t_0,x_0)\in \bR^{d+1}$,
$r\in (0,\infty)$, and $\kappa\ge 4$, we have
\begin{multline*}
\left(|D_{x'}^2 u(t,x)-(D_{x'}^2u)_{Q_r(t_0,x_0)}|^p
\right)_{Q_r(t_0,x_0)}
\le N\kappa^{d+2}
\left(|Pu|^p\right)_{Q_{\kappa r}(t_0,x_0)}\\
+N\kappa^{d+2}\gamma^{1/\sigma}
\left(|D^2 u|^{p\tau}\right)_{Q_{\kappa r}(t_0,x_0)}^{1/\tau}
+N\kappa^{-p\alpha}
\left(|D_{x'}^2 u|^p\right)_{Q_{\kappa r}(t_0,x_0)},
\end{multline*}
provided that $u$ vanishes outside  $Q_{R_0}$.
\end{theorem}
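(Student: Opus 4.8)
The plan is to transcribe the proof of Theorem \ref{thm4.1} almost verbatim, the only substantive change being that the solvability of the ``simple'' model operator is now provided by Theorem \ref{thm3.2} in place of Theorem \ref{thm2.2}. Throughout, $P_0$ stands for the operator of Section \ref{sec4}, that is, $P_0u=-u_t+a^{ij}D_{ij}u$ with $a^{11}=a^{11}(x^1)$ and $a^{ij}=a^{ij}(t,x^1)$ for $ij>1$.

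First I would note that Lemma \ref{lemma 9.10.1} is valid for this $P_0$ as well: its proof invokes only the Krylov--Safonov interior H\"older estimate, which holds for any uniformly nondegenerate operator with merely measurable coefficients, and this $P_0$ is of that type. Next, from Theorem \ref{thm3.2} I would deduce the solvability of the initial value problem $P_0u-\lambda u=f$, $u(0,\cdot)=0$ on $(0,T)\times\bR^d$, together with the corresponding a priori bound, exactly as Corollary \ref{cor2.3} is deduced from Theorem \ref{thm2.2}. Feeding this and Lemma \ref{lemma 9.10.1} into the argument of Theorem \ref{thm12.02} then yields, for the present $P_0$,
\begin{equation*}
\left(|D_{x'}^2u-(D_{x'}^2u)_{Q_r}|^p\right)_{Q_r}
\le N\kappa^{d+2}\left(|P_0u|^p\right)_{Q_{\kappa r}}
+N\kappa^{-p\alpha}\left(|D_{x'}^2u|^p\right)_{Q_{\kappa r}}.
\end{equation*}

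With these ingredients in hand I would run the frozen-coefficient argument of Theorem \ref{thm4.1}. Fix $(t_0,x_0)\in\bR^{d+1}$, $\kappa\ge 4$, $r\in(0,\infty)$, and take $Q=Q_{\kappa r}(t_0,x_0)$ when $\kappa r<R_0$ and $Q=Q_{R_0}$ otherwise. Set $\hat f=-u_t+\hat a^{11}(x^1)D_1^2u+\sum_{ij>1}\bar a^{ij}(t,x^1)D_{ij}u$, where $\hat a^{11}$ and the $\bar a^{ij}$ ($ij>1$) are the averages from the definitions preceding Assumption \ref{assump2} ii) and iii). The crucial observation — already recorded before the statement — is that $u\mapsto\hat f$ is an operator of exactly the type governed by Theorem \ref{thm3.2} (here $\hat a^{11}$ depends only on $x^1$, and $\bar a^{ij}$, $ij>1$, only on $(t,x^1)$), so the analogue of Theorem \ref{thm12.02} just established applies to it. This bounds $\dashint_{Q_r(t_0,x_0)}|D_{x'}^2u-(D_{x'}^2u)_{Q_r(t_0,x_0)}|^p$ in terms of $(|\hat f|^p)_{Q_{\kappa r}(t_0,x_0)}$ and $(|D_{x'}^2u|^p)_{Q_{\kappa r}(t_0,x_0)}$. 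Writing $f=Pu$ and observing that $\hat f-f=(\hat a^{11}-a^{11})D_1^2u+\sum_{ij>1}(\bar a^{ij}-a^{ij})D_{ij}u$ is supported in $Q_{R_0}$, I would then estimate $\int_{Q_{\kappa r}}|\hat f|^p\le N\int_{Q_{\kappa r}}|f|^p+NI$, apply H\"older's inequality with exponents $\sigma,\tau$ to write $I\le NI_1^{1/\sigma}I_2^{1/\tau}$ with $I_2=\int_{Q_{\kappa r}}|D^2u|^{p\tau}$, and bound $I_1$ (the integral over $Q_{\kappa r}\cap Q_{R_0}$ of $|\hat a^{11}-a^{11}|^{p\sigma}+\sum_{ij>1}|\bar a^{ij}-a^{ij}|^{p\sigma}$) by $N\gamma|Q|\le N(\kappa r)^{d+2}\gamma$, using the boundedness of the coefficients together with Assumption \ref{assump2} ($\gamma$) ii) and iii). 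Collecting the estimates yields the claimed inequality.

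There is no genuine obstacle here: every step is a verbatim repetition with the single substitution of Theorem \ref{thm3.2} for Theorem \ref{thm2.2}. The only thing requiring a moment's care is the bookkeeping in the last paragraph — one should verify that the frozen operator $u\mapsto\hat f$ has precisely the structure of the $P_0$ of Section \ref{sec4}, so that Theorem \ref{thm3.2}, and hence the analogue of Theorem \ref{thm12.02}, applies, and that Assumption \ref{assump2} ii) and iii) control exactly the mean oscillations $\dashint|a^{11}-\hat a^{11}|$ and $\dashint|a^{ij}-\bar a^{ij}|$ ($ij>1$) that enter $I_1$.
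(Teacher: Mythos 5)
Your proposal is correct and follows exactly the route the paper takes: the paper proves Theorem \ref{thm5.2} by observing that the proofs of Lemma \ref{lemma 9.10.1}, Theorem \ref{thm12.02} and Theorem \ref{thm4.1} only use solvability for the ``simple'' operator and the independence of $a^{ij}$ from $x'$, so one simply substitutes Theorem \ref{thm3.2} for Theorem \ref{thm2.2} and freezes $a^{11}$ to $\hat a^{11}(x^1)$ and $a^{ij}$, $ij>1$, to $\bar a^{ij}(t,x^1)$ in accordance with Assumption \ref{assump2} ii) and iii), exactly as you do.
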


\begin{lemma}
 						\label{lem5.2}
Assume $b^i=c=0$. For any
$p\in (1,\infty)$ there exists a $\gamma_1$, $\mu_1$ and $N$, depending only on $d$, $\delta$ and $p$, such that under Assumption
\ref{assump2} ($\gamma_1$) ii) for any $u\in C_0^\infty(Q_{\mu_1^{-1}R_0})$, we have
\begin{equation}
                \label{eq2.53}
\|D^2 u\|_{L_{p}}+\|u_{t}\|_{L_{p}}\le N\|Pu\|_{L_{p}}+N\|D_{x'}^2 u\|_{L_{p}}.
\end{equation}
\end{lemma}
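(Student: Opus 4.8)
The plan is to repeat the argument of Lemma \ref{lem2.1}, with one substitution: the $L_p$-estimate of \cite{Krylov_2005} used there for the operator $-\partial_t+a^{11}D_1^2+\Delta_{d-1}$ with $a^{11}$ of small mean oscillation in $x$ is no longer available under Assumption \ref{assump2} ii), since there $a^{11}$ is only close to a function $\hat a^{11}(x^1)$ of $x^1$; in its place I would use the ``simple coefficient'' solvability of Theorem \ref{thm3.2}, at the price of a freezing-and-perturbation step in the $(t,x')$-variables.

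First I would freeze $a^{11}$ in $(t,x')$. Since $u$ is supported in $Q:=Q_{\mu_1^{-1}R_0}$ with $\mu_1\ge 1$, take $\hat a^{11}(x^1)$ to be the $(t,x')$-average of $a^{11}$ over $Q$ on the $x^1$-range of $Q$, extended outside by a constant in $[\delta,\delta^{-1}]$, so that $\hat a^{11}$ depends on $x^1$ alone and is uniformly elliptic. With $P_0u:=-u_t+\hat a^{11}(x^1)D_1^2u+\Delta_{d-1}u$, which is of the type treated in Theorem \ref{thm3.2} (its leading part becomes a divergence-form operator after the bi-Lipschitz change $y^1=\int_0^{x^1}ds/\hat a^{11}(s)$, $y'=x'$, exactly as in the proof of that theorem), we have
$$
P_0u=Pu+\sum_{ij>1}(\delta_{ij}-a^{ij})D_{ij}u-(a^{11}-\hat a^{11})D_1^2u,
$$
so Theorem \ref{thm3.2} with $\lambda=0$ gives
$$
\|D^2u\|_{L_p}+\|u_t\|_{L_p}\le N\|P_0u\|_{L_p}\le N\|Pu\|_{L_p}+N\sum_{ij>1}\|D_{ij}u\|_{L_p}+N\|(a^{11}-\hat a^{11})D_1^2u\|_{L_p}.
$$
In the middle sum only derivatives $D_{ij}u$ with $(i,j)\neq(1,1)$ occur, so \eqref{3.57} bounds it by $\varepsilon\|D_1^2u\|_{L_p}+N(\varepsilon)\|D_{x'}^2u\|_{L_p}$, and the first summand is harmless after absorption.

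The real work is the term $\|(a^{11}-\hat a^{11})D_1^2u\|_{L_p}$. Unlike in Lemma \ref{lem2.1}, $a^{11}-\hat a^{11}$ is only small in the mean (Assumption \ref{assump2} ii) gives $\dashint_Q|a^{11}-\hat a^{11}|\le\gamma_1$), not in $L_\infty$, so the crude bound $2\delta^{-1}\|D_1^2u\|_{L_p}$ is useless. The device I would use is the one behind Theorem \ref{thm4.1} and Lemma \ref{lem4.2}: by H\"older's inequality on $Q$ with exponents $\sigma,\tau$, $1/\sigma+1/\tau=1$, $\tau$ near $1$,
$$
\|(a^{11}-\hat a^{11})D_1^2u\|_{L_p}\le N(\gamma_1|Q|)^{1/(p\sigma)}\,\|D^2u\|_{L_{p\tau}},
$$
and the higher integrability norm $\|D^2u\|_{L_{p\tau}}$ is then controlled by the Fefferman--Stein and Hardy--Littlewood argument, applied (with a small exponent, as in Lemma \ref{lem4.2}) to the sharp function estimate for $D_{x'}^2u$ furnished by Theorem \ref{thm5.2}, and for the component $D_1^2u$ to the divergence-form equation solved by $w:=D_1u$, namely $-w_t+D_1(\hat a^{11}(x^1)D_1w)+\Delta_{d-1}w=D_1(Pu+\sum_{ij>1}(\delta_{ij}-a^{ij})D_{ij}u-(a^{11}-\hat a^{11})D_1w)$, whose leading coefficients depend only on $x^1$. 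Taking $\mu_1$ large (so $|Q|$ small) and then $\gamma_1$ small makes the coefficient of $\|D^2u\|_{L_p}$ thus produced arbitrarily small; the term $N\|D_{x'}^2u\|_{L_p}$ already allowed on the right of \eqref{eq2.53} is precisely the slack that lets the absorption go through; note that \eqref{eq2.53} has to be proved before the full solvability under Assumption \ref{assump2} ii) is known, so there is no room to use that solvability here. Assembling these estimates, and recovering $\|u_t\|_{L_p}$ from $u_t=a^{ij}D_{ij}u-Pu$ and $\|D_{x^1x'}u\|_{L_p}$ from \eqref{3.57}, yields \eqref{eq2.53}. The step I expect to be the main obstacle is this last one: arranging the H\"older and maximal-function estimates so that the mean-oscillation error genuinely contributes only a small multiple of $\|D^2u\|_{L_p}$, and checking that the resulting inequality closes with merely $N\|D_{x'}^2u\|_{L_p}$ on the right-hand side.
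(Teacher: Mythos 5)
Your overall strategy---freeze $a^{11}$ in $(t,x')$ and treat $(a^{11}-\hat a^{11})D_1^2u$ as a nondivergence-form perturbation---breaks down at exactly the step you flag. The global H\"older inequality gives $\|(a^{11}-\hat a^{11})D_1^2u\|_{L_p}\le N(\gamma_1|Q|)^{1/(p\sigma)}\|D^2u\|_{L_{p\tau}}$ with $\tau>1$, and the norm $\|D^2u\|_{L_{p\tau}}$ is strictly stronger than anything available at this stage: no $L_{p\tau}$ theory exists yet (the lemma is itself a step toward the $L_p$ theory), the Fefferman--Stein and Hardy--Littlewood theorems do not upgrade integrability from $p$ to $p\tau$, and the mean oscillation estimate of Theorem \ref{thm5.2} has $\left(|D^2 u|^{p\tau}\right)^{1/\tau}$ on its right-hand side, so it cannot yield an $L_{p\tau}$ bound either. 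Passing to the differentiated equation for $w=D_1u$ with the frozen coefficient $\hat a^{11}$ does not help, because the error $(a^{11}-\hat a^{11})D_1w$ reappears on the right and produces the same unabsorbable term. In the paper's scheme, H\"older with exponents $\sigma,\tau$ is only ever applied locally, inside mean oscillation estimates (Theorems \ref{thm4.1} and \ref{thm5.2}), where it leads to $\bM^{1/(q\tau)}(|D^2u|^{q\tau})$ with $q\tau<p$ and everything remains at the level of $L_p$; used globally, the argument cannot close.

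The paper's proof avoids freezing altogether. Differentiating $Pu=f$ (with $b^i=c=0$) in $x^1$ and noting that $D_1\bigl(a^{11}D_1^2u\bigr)=D_1\bigl(a^{11}D_1w\bigr)$ for $w=D_1u$ and \emph{arbitrary} measurable $a^{11}$, one sees that $w$ satisfies the divergence-form equation \eqref{eq2.55} (with $\lambda=0$) whose leading coefficients are the true $a^{11}$ together with $\delta_{ij}$, and these satisfy Assumption \ref{assump2} ($\gamma_1$) ii) and iii). Proposition \ref{lemA2}---the local estimate for divergence form equations, which already encapsulates the freezing/perturbation machinery and is the source of $\gamma_1$ and $\mu_1$---then gives $\|D_1^2u\|_{L_p}\le\|Dw\|_{L_p}\le N\|Pu\|_{L_p}+N\sum_{ij>1}\|D_{ij}u\|_{L_p}$; the bound on $\|u_t\|_{L_p}$ follows from \eqref{eq3.03} with $\lambda=0$, and \eqref{3.57} absorbs the mixed derivatives, yielding \eqref{eq2.53}. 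If you wish to keep your structure, the fix is to replace your frozen-coefficient step by this application of Proposition \ref{lemA2} to $w$, rather than attempting to absorb $(a^{11}-\hat a^{11})D_1^2u$ in $L_p$.
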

\begin{proof}
Because $w=D_1 u$ satisfies \eqref{eq2.55}, due to Proposition \ref{lemA2} we have
$$
\|D_1^2 u\|_{L_p}\le \|Dw\|_{L_p}\le N\|f\|_{L_p}+\sum_{ij>1}\|D_{ij}u\|_{L_p}.
$$
This and \eqref{eq3.03} with $\lambda=0$ yield
$$
\|u_t\|_{L_p}+\|D^2 u\|_{L_p}\le N\|f\|_{L_p}+\sum_{ij>1}\|D_{ij}u\|_{L_p}.
$$
To finish the proof of \eqref{eq2.53}, it suffices to use \eqref{3.57}.
\end{proof}

By using Theorem \ref{thm5.2} and Lemma \ref{lem5.2}, we can prove Theorem \ref{thm3} in exactly the same way as in the proof of Theorem \ref{thm2}. We omit the details.

\section{An extension to parabolic systems}  		 \label{sec5}

The objective of this section is to extend Theorem \ref{thm2} to parabolic systems under the same regularity assumption on the leading coefficients. Let $m\ge 2$ be an integer. We consider parabolic operators in nondivergence form
\begin{equation*}                          
P \vu = -\vu_t + A^{\alpha \beta}
D_{\alpha\beta}\vu + B^{\alpha}
D_\alpha\vu + C \vu,
\end{equation*}
acting on (column) vector-valued functions
$\vu = (u^1, \ldots, u^m)^T$. Here for each $\alpha,\beta=1,...,d$, $A^{\alpha\beta}$, $B^\alpha$ and $C$ are $m\times m$ matrix-valued functions
given on $\bR^{d+1}$;
i.e.,
$A^{\alpha\beta}=[A^{\alpha \beta}_{ij}(t,x)]_{m \times m}$, etc.

As in the scalar case, all the coefficients are assumed to be bounded and measurable, and $A^{\alpha\beta}$ are uniformly elliptic, i.e.
\begin{equation}
                            \label{eq15.41}
\begin{split}
|B^\alpha|+|C|\le K &,\quad |A_{ij}^{\alpha\beta}|\le \delta^{-1},\\
\delta \sum_{i=1}^{m}\sum_{\alpha=1}^d |\xi_{\alpha}^i|^2\le
\sum_{\alpha,\beta=1}^d \sum_{i, j =1}^{m}
A_{ij}^{\alpha \beta}(&t,x) \xi^i_{\alpha} \xi^j_{\beta}
\le \delta^{-1} \sum_{i=1}^{m}\sum_{\alpha=1}^d |\xi_{\alpha}^i|^2
\end{split}
\end{equation}
for all $(t,x) \in \bR^{d+1}$ and $\xi_{\alpha} = (\xi^i_{\alpha}) \in \bR^m$, $\alpha = 1, \ldots, d$. Additionally, we assume $A^{\beta 1}$ is symmetric matrix for $\beta=2,\ldots,d$.

Similarly we also define the objects $\bar A^{11}$, $\bar A^{ij},ij>1$, $A_{R,1}^{11,\#}$ and $A_{R}^{\#}$ as in Section \ref{mainsec}.

\begin{assumption}[$\gamma$]
                                            \label{assump5}
There exists a positive constant $R_0$ such that
$$
A^{11,\#}_{R_0,1}+A^{\#}_{R_0}\le \gamma.
$$
\end{assumption}

Denote
$$
P_0\vu=-\vu_t+A^{\alpha\beta}D_{\alpha\beta}\vu,
$$
where
$$A^{11}=A^{11}(t),\quad A^{\alpha\beta}=A^{\alpha\beta}(t,x^1),\,\,\alpha\beta>1.
$$
First we give a counterpart of Theorem \ref{thm2.2}.

\begin{theorem}
						\label{thm4.2}
Let $p\in (1,\infty)$ and $T\in (-\infty,\infty]$. Then for any $\vu\in W^{1,2}_p(\bR^{d+1}_T)$ and $\lambda\ge 0$, we have
$$
\lambda\|\vu\|_{L_{p}(\bR^{d+1}_T)}+\sqrt{\lambda}
\|D\vu\|_{L_{p}(\bR^{d+1}_T)}+\|D^2\vu\|_{L_{p}(\bR^{d+1}_T)}
+\|\vu_{t}\|_{L_{p}(\bR^{d+1}_T)}
$$
\begin{equation*}
\le N\|P_0\vu-\lambda \vu\|_{L_{p}(\bR^{d+1}_T)},
\end{equation*}
where $N=N(d,m,p,\delta)>0$.
Moreover, for any $\vf\in L_p(\bR^{d+1}_T)$ and $\lambda>0$ there is a unique $\vu\in W^{1,2}_p(\bR^{d+1}_T)$ solving
$$
P_0 \vu-\lambda  \vu=\vf \quad\text{in}\,\,\,\bR^{d+1}_T.
$$
\end{theorem}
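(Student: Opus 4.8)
The plan is to mimic the proof of Theorem \ref{thm2.2} in the scalar case, replacing scalar divergence-form results by their system analogues. By the method of continuity it suffices to prove the a priori estimate, and by a density argument we may take $\vu\in C_0^\infty$ and $T=\infty$; the general $T$ reduces to this case exactly as in Theorem \ref{thm2.2} by solving $P_0\vw-\lambda\vw=\chi_{t<T}(P_0\vu-\lambda\vu)$ and using $\vu=\vw$ for $t<T$. Set $\vf=P_0\vu-\lambda\vu$. The starting point is the identity
\begin{equation*}
-\vu_t+A^{11}D_1^2\vu+\Delta_{d-1}\vu-\lambda\vu=\vf+\sum_{\alpha\beta>1}(\delta_{\alpha\beta}I_m-A^{\alpha\beta})D_{\alpha\beta}\vu,
\end{equation*}
where $A^{11}=A^{11}(t)$ depends only on $t$ and $\Delta_{d-1}$ acts componentwise. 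The operator on the left is diagonal up to the $t$-dependent coefficient $A^{11}(t)$, so its $W^{1,2}_p$-solvability for all $p\in(1,\infty)$ follows from the scalar theory applied componentwise (or directly from Theorem 5.5 of \cite{Krylov_2007_mixed_VMO} after diagonalizing $A^{11}(t)$, which is symmetric positive definite). Hence the left-hand side of the desired estimate is bounded by $N\|\vf\|_{L_p}+N\sum_{\alpha\beta>1}\|D_{\alpha\beta}\vu\|_{L_p}$, and it remains to prove
\begin{equation*}
\|D_{xx'}\vu\|_{L_p}\le N\|\vf\|_{L_p}.
\end{equation*}

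For this, rewrite $P_0$ in divergence form: using that $A^{11}=A^{11}(t)$, $A^{\alpha\beta}=A^{\alpha\beta}(t,x^1)$ for $\alpha\beta>1$, and that $A^{\beta1}$ is symmetric for $\beta\ge2$,
\begin{equation*}
P_0\vu=-\vu_t+D_1(A^{11}D_1\vu)+\sum_{\beta=2}^d D_\beta\big((A^{1\beta}+A^{\beta1})D_1\vu\big)+\sum_{\alpha,\beta=2}^d D_\beta(A^{\alpha\beta}D_\alpha\vu).
\end{equation*}
The symmetry of $A^{\beta1}$ is exactly what makes the $D_1$-then-$D_\beta$ and $D_\beta$-then-$D_1$ second-order terms combine correctly; note the resulting divergence-form coefficients need not be symmetric, which is why Proposition \ref{lemA3} (and its system analogue in the Appendix) is stated without a symmetry hypothesis. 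Then for each $k=2,\ldots,d$, $D_k\vu$ solves the divergence-form system $P_0(D_k\vu)-\lambda D_k\vu=D_k\vf$, and the $\cH^1_p$-estimate for divergence-form parabolic systems with coefficients of this type — the system version of Proposition \ref{lemA3} — gives $\|D_{xx^k}\vu\|_{L_p}\le N\|\vf\|_{L_p}$. Summing over $k=2,\ldots,d$ yields the bound on $\|D_{xx'}\vu\|_{L_p}$ and completes the proof.

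The main obstacle is the availability of the divergence-form solvability results for parabolic \emph{systems} with partially BMO (indeed, with the relevant ``simple'') leading coefficients, for all $p\in(1,\infty)$: the scalar Appendix results (Propositions \ref{lemA1}--\ref{lemA3}) lean on the duality argument for divergence-form equations of \cite{Dong08,DongKim08b}, and one must check that the arguments there carry over verbatim to systems. This is indeed the case, since the divergence-form $L_p$-theory in \cite{Dong08,DongKim08b} is developed for systems, and the only structural input used above — namely that after the change to divergence form the leading coefficients depend only on $(t,x^1)$ with $A^{11}$ depending only on $t$ — is preserved. I would therefore state the needed system analogues of the Appendix propositions explicitly (or remark that their proofs are identical to the scalar case) and then the argument above goes through with $N$ now additionally depending on $m$. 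Everything else — the method of continuity, the density reduction, the reduction of general $T$ — is routine and identical to the scalar case.
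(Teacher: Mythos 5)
Your overall strategy is the same as the paper's (which indeed proves this theorem by repeating the proof of Theorem \ref{thm2.2} with system ingredients), and the divergence-form half of your argument is fine: Proposition \ref{lemA3} is already stated for \emph{systems} in the Appendix, so no extension is needed there, and differentiating in $x^k$, $k\ge 2$, gives $\|D_{xx'}\vu\|_{L_p}\le N\|\vf\|_{L_p}$ exactly as you say. But there is a genuine gap in the first half, where you treat the model operator $-\partial_t+A^{11}(t)D_1^2+\Delta_{d-1}$. You claim its $W^{1,2}_p$-theory ``follows from the scalar theory applied componentwise (or directly from Theorem 5.5 of \cite{Krylov_2007_mixed_VMO} after diagonalizing $A^{11}(t)$, which is symmetric positive definite).'' Neither route works: $A^{11}(t)$ is a full $m\times m$ matrix that couples the components, so componentwise scalar theory does not apply; and $A^{11}$ is \emph{not} assumed symmetric (only $A^{\beta 1}$ for $\beta=2,\ldots,d$ are), so it need not be orthogonally diagonalizable. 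Even if it were symmetric for each $t$, the diagonalizing matrix $O(t)$ is merely measurable in $t$, and the substitution $\vv=O(t)^{T}\vu$ is incompatible with the $\vu_t$ term (one cannot differentiate $O$ in $t$), so the equation does not decouple. The paper closes exactly this point by invoking the system version of Krylov's Theorem 5.5, namely Theorem 3.1 of \cite{DK08}, which handles nondivergence systems with matrix coefficients measurable in $t$ and VMO in $x$; your proof needs that citation (or an equivalent system result) rather than the componentwise/diagonalization argument.

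A secondary, more minor point: the symmetry of $A^{\beta 1}$, $\beta\ge 2$, is not what makes the second-order terms ``combine'' — the identity $A^{\beta1}D_{\beta 1}\vu=D_\beta(A^{\beta1}D_1\vu)$ holds simply because $A^{\beta1}$ is independent of $x^\beta$. Its actual role is to guarantee that the regrouped divergence-form coefficients (with $A^{1\beta}+A^{\beta1}$ in front of $D_1\vu$ and $0$ in the $(1,\beta)$ slots) still satisfy the ellipticity condition \eqref{eq15.41}, which is the hypothesis of Proposition \ref{lemA3}; without symmetry the quadratic form of the new coefficients need not coincide with the original one. With the citation of Theorem 3.1 of \cite{DK08} inserted and this point stated correctly, your argument matches the paper's proof.
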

\begin{proof}
The proof is almost the same as that of Theorem \ref{thm2.2}. So we only give a few comments. The only difference is that instead of Theorem 5.5 of \cite{Krylov_2007_mixed_VMO}, we use a system version of it, which can be found in Theorem 3.1 of \cite{DK08}.
Here the symmetric condition of $A^{\beta 1},\beta=2,\ldots,d$ is needed to apply Proposition \ref{lemA3}.
\end{proof}

\begin{remark}
 				\label{rem4.32}
The results in \cite{KK2,Kim07,Kim07a,Kim07b} rely on the $W^{1,2}_2$-solvability proved in \cite{KK2} for parabolic equations with $a^{ij}$ depending only on $(t,x^1)$. The extension of the latter to parabolic systems is unknown to us, except for the one space dimensional case. So is the corresponding extension of $W^{2}_2$-solvability to elliptic systems. This is because the proof in \cite{KK2} uses the maximum principle, which is only valid for scalar equations.
\end{remark}

The key estimate in this section is the following lemma, which is a generalization of Lemma \ref{lemma 9.10.1}. We note that, since we are dealing with systems, the Krylov--Safonov estimate does not apply.

\begin{lemma}
                                     \label{lem4.4}
Let $p\in (1,\infty)$, $d\geq 2$, $\kappa \ge 2$, and $r > 0$.
Assume that $\vu \in
C_0^{\infty}$  and $P_0 \vu =0$ in $Q_{\kappa r}$.
Then for any $\alpha\in (0,1)$, there exist constants
$N = N(d,m,p, \delta,\alpha)$ such that
for any multi-index  $\gamma=(\gamma^1,\gamma')$
\begin{equation}
                                          \label{9.10.1}
\dashint_{Q_r} |D^{\gamma'}\vu  -
 (D^{\gamma'}\vu)_{Q_r}|^p \, dx\,dt
\le N \kappa^{-p\alpha} \left(|D^{\gamma'}\vu|^{p}\right)_{Q_{\kappa r}}.
\end{equation}
\end{lemma}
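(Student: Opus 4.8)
\textbf{Proof proposal for Lemma \ref{lem4.4}.}

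The plan is to first reduce to the case $\gamma = 0$ and $r = 1$. Indeed, since $A^{11} = A^{11}(t)$ and $A^{\alpha\beta} = A^{\alpha\beta}(t,x^1)$ depend only on $(t,x^1)$, differentiating $P_0 \vu = 0$ in any of the variables $x^2, \dots, x^d$ shows that each $D^{\gamma'}\vu$ again solves $P_0(D^{\gamma'}\vu) = 0$ in $Q_{\kappa r}$; and parabolic scaling $(t,x) \mapsto (r^2 t, r x)$ preserves the class of operators $P_0$ (up to replacing the coefficients by rescaled ones in the same class) as well as the form of \eqref{9.10.1}. So it suffices to prove that if $P_0 \vu = 0$ in $Q_\kappa$ then $\dashint_{Q_1}|\vu - (\vu)_{Q_1}|^p \le N\kappa^{-p\alpha}(|\vu|^p)_{Q_\kappa}$.

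The heart of the matter is an interior Hölder (or oscillation-decay) estimate for solutions of $P_0 \vu = 0$, which for systems cannot come from Krylov--Safonov but instead from a bootstrap using the $L_2$-solvability of Theorem \ref{thm4.2}. First I would observe that, by the Caccioppoli-type energy estimates together with the solvability result, solutions of $P_0 \vu = 0$ in $Q_{2\rho}$ satisfy $\|\vu\|_{W^{1,2}_2(Q_\rho)} \le N\rho^{-\cdot}\|\vu\|_{L_2(Q_{2\rho})}$, and then one can iterate: since $D^{\gamma'}\vu$ also solves the homogeneous system, one gains control of arbitrarily many $x'$-derivatives of $\vu$ on smaller cubes, so by parabolic Sobolev embedding $\vu$ is Hölder continuous in $x'$ on, say, $Q_{1/2}$ with a bound $[\vu]_{C^{\beta}_{x'}(Q_{1/2})} \le N\|\vu\|_{L_2(Q_1)}$ for some $\beta>0$ depending on $d, m, \delta$. (One must be a little careful that this only gives regularity in $x'$ and $t$, not in $x^1$, because the coefficients are merely measurable in $x^1$; but since the statement of the lemma only concerns oscillation of $D^{\gamma'}\vu$, i.e. the $x'$-derivatives, this is exactly what is needed — and in fact by the embedding one can phrase the output in terms of oscillation of $\vu$ itself in all variables if one localizes appropriately, as in the scalar Lemma \ref{lemma 9.10.1}.)

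Once the Hölder estimate is in hand, the argument mirrors the proof of Lemma \ref{lemma 9.10.1}: from $[\vu]_{C^\beta} \le N\|\vu\|_{L_2}$ on unit scale, a scaling in the cube of size $\kappa$ gives $\osc_{Q_1}\vu \le N\kappa^{-\beta}(|\vu|^2)_{Q_\kappa}^{1/2}$, hence $\osc_{Q_1}\vu \le N\kappa^{-\beta}(|\vu|^p)_{Q_\kappa}^{1/p}$ by Hölder's inequality (or Jensen, going from the $L^2$ average up to the $L^p$ average on the larger cube), and finally $\dashint_{Q_1}|\vu - (\vu)_{Q_1}|^p \le N(\osc_{Q_1}\vu)^p$. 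If the desired exponent $\alpha$ exceeds the $\beta$ produced by the bootstrap, one simply notes that the inequality for $\alpha$ follows from that for $\min(\alpha,\beta)$ after enlarging $N$ and using $\kappa \ge 2$, so the claim holds for every $\alpha \in (0,1)$ (with $N$ blowing up as $\alpha \to 1$), exactly as stated.

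The main obstacle is the interior Hölder continuity for the homogeneous \emph{system} $P_0 \vu = 0$: one does not have a maximum principle, so the De Giorgi--Nash--Moser / Krylov--Safonov route is unavailable, and the regularity must instead be manufactured by the bootstrap on tangential ($x'$ and $t$) derivatives using the $L_2$ theory of Theorem \ref{thm4.2} plus interior energy estimates, being careful that only tangential smoothness is available because the coefficients are rough in $x^1$. Everything after that — scaling, Jensen, passing from oscillation to the mean-oscillation integral — is routine and identical to the scalar case.
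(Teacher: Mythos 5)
Your overall plan (reduce to $\gamma=0$, replace Krylov--Safonov by a bootstrap-generated H\"older estimate, then scale) is in the right spirit, but the key step has a genuine gap. After the reduction, \eqref{9.10.1} asks for decay of the mean oscillation of $\vu$ over the full parabolic cylinder $Q_r$, i.e.\ with respect to $t$, $x^1$ and $x'$ simultaneously, so oscillation control of $\vu$ in the $x^1$ direction (and in $t$) is indispensable. Your bootstrap --- differentiating in the tangential directions and combining Theorem \ref{thm4.2} with Sobolev embedding --- only produces regularity in $(t,x')$, and your parenthetical claim that this ``is exactly what is needed'' confuses the function whose oscillation is measured ($D^{\gamma'}\vu$, a tangential derivative) with the variables over which the oscillation is taken (all of them). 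No mechanism is offered for the $x^1$ direction, and none follows from tangential estimates alone. The paper closes precisely this point by observing that, since $A^{11}=A^{11}(t)$, $A^{\alpha\beta}=A^{\alpha\beta}(t,x^1)$ and $A^{\beta1}$ is symmetric, $P_0\vu$ can be rewritten in divergence form
$$
-\vu_t+D_1(A^{11}D_1\vu)+\sum_{\beta=2}^d D_\beta\left((A^{1\beta}+A^{\beta1})D_1\vu\right)+\sum_{\alpha,\beta=2}^d D_\beta(A^{\alpha\beta}D_\alpha\vu),
$$
and then invoking Proposition \ref{lemA4} (Lemma 6.3 of \cite{DongKim08b}), which yields $\|\vu\|_{C^{\gamma/2,\gamma}(Q_1)}+\|D_{x'}\vu\|_{C^{\gamma/2,\gamma}(Q_1)}\le N\|\vu\|_{L_p(Q_2)}$, i.e.\ H\"older continuity in \emph{all} variables. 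That proposition is itself proved by a bootstrap, but one that exploits the divergence structure (energy/Caccioppoli estimates control $D_1\vu$ locally in $L_2$ and give a weak bound on $\vu_t$); this is exactly the ingredient missing from your argument --- note in particular that a Caccioppoli inequality for a nondivergence system with merely measurable coefficients is not available without this rewriting, so even your interior $W^{1,2}_2$ estimate needs it or a substitute.

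A secondary issue: your passage from the $L_2$-based H\"older bound to the right-hand side of \eqref{9.10.1} via ``Jensen, going from the $L^2$ average up to the $L^p$ average'' only works for $p\ge 2$; for $p<2$ the inequality goes the wrong way. One must either run a standard local-boundedness/iteration argument to bound the local $L_2$ norm of a solution by its $L_p$ norm, or, as in the paper, use a H\"older estimate stated directly in terms of $\|\vu\|_{L_p(Q_2)}$ for every $p\in(1,\infty)$, which is how Proposition \ref{lemA4} is formulated.
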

\begin{proof}
As in the proof of Lemma \ref{lemma 9.10.1}, without loose of generality, we may assume $|\gamma|=0$. We again use the fact that $P_0\vu$ can be rewritten into a divergence form:
$$
-\vu_t+D_1(A^{11} D_1 \vu)+\sum_{\beta=2}^d D_{\beta}\left((A^{1\beta}+A^{\beta 1})D_1\vu
\right)+\sum_{\alpha,\beta=2}^d D_{\beta}(A^{\alpha\beta}D_\alpha \vu).
$$
Now \eqref{9.10.1} follows from Proposition \ref{lemA4}. The lemma is proved.
\end{proof}

\begin{remark}
We claim a more general estimate:  under the assumptions of Lemma \ref{lem4.4}, we have
$$
\dashint_{Q_r} |D^{\gamma}\vu  -
 (D^{\gamma}\vu)_{Q_r}|^p \, dx\,dt
\le N \kappa^{-p\alpha} \left(|D^{|\gamma^1|}D^{\gamma'}\vu|^{p}\right)_{Q_{\kappa r}}.
$$
provided that $\gamma^1=0$ or $1$. Indeed, it suffices to consider the case $\gamma^1=1$ and $|\gamma'|=0$. This case can be proved by using the iteration argument in the proof of Lemma 4.2 \cite{DongKim08b} and the Poincar\'e's inequality. We leave the details to the interested reader. This estimate will not be used in the sequel.
\end{remark}

With Theorem \ref{thm4.2} and Lemma \ref{lem4.4} available, we can obtain analogous results of Theorems \ref{thm12.02} and \ref{thm4.1}. Following the lines of Section \ref{sec3}, we get the following solvability result, which is a generalization of Theorem \ref{thm2}.

\begin{theorem}
                                         \label{thm4.6}

For any $p\in (1,\infty)$,
there exists a $\gamma =\gamma(d,m,\delta,p) >0$
such that under Assumption
\ref{assump5} ($\gamma$) for any $T\in (-\infty,+\infty]$ the following holds.

i) For any $\vu\in W^{1,2}_p(\bR^{d+1}_T)$,
$$
\lambda\|\vu\|_{L_{p}(\bR^{d+1}_T)}+\sqrt{\lambda}
\|D\vu\|_{L_{p}(\bR^{d+1}_T)}+\|D^{2}\vu\|_{L_{p}(\bR^{d+1}_T)}
+\|\vu_t\|_{L_{p}(\bR^{d+1}_T)}
$$
\begin{equation*}
\leq N\|P\vu-\lambda \vu\|_{L_{p}(\bR^{d+1}_T)},
\end{equation*}
provided that $\lambda\geq \lambda_0$, where $\lambda_0
\geq0$ depends only
 on $d,m,\delta,p,K$ and $R_0$, and $N$ depends only on $d,m,\delta$ and $p$.

ii) For any $\lambda> \lambda_0$ and $\vf\in L_p(\bR^{d+1}_T)$, there exists a unique solution
$\vu\in W^{1,2}_p(\bR^{d+1}_T)$ of
$$
P\vu-\lambda\vu=\vf \quad \text{in}\,\,\, \bR^{d+1}_T.
$$

iii) In the case that
$A^{11}=A^{11}(t)$, $A^{\alpha\beta}=A^{\alpha\beta}(t,x^1),
\alpha\beta>1$ and $b^j\equiv c\equiv 0$, we can take
$\lambda_0=0$ in i) and ii).
\end{theorem}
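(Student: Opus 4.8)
The plan is to follow the same route used to prove Theorem \ref{thm2} in Section \ref{sec3}, replacing every scalar ingredient with its system counterpart, all of which are now available. The core mechanism is unchanged: one obtains a sharp-function (mean-oscillation) estimate for the ``tangential'' portion $D_{x'}^2\vu$ of the Hessian, controls the remaining components $D_1^2\vu$ and $\vu_t$ by this portion plus $P\vu$, applies the Fefferman--Stein and Hardy--Littlewood theorems, absorbs a small term, and finally passes from compactly supported $\vu$ to general $\vu$ via a partition of unity and Agmon's argument to recover the full estimate with the $\lambda$ terms. The reason all of this goes through for systems is exactly that the three scalar tools used in Section \ref{sec3} have been lifted: Theorem \ref{thm2.2} is replaced by Theorem \ref{thm4.2}; Lemma \ref{lemma 9.10.1} is replaced by Lemma \ref{lem4.4} (this is where the Krylov--Safonov maximum-principle argument, which fails for systems, is bypassed by a bootstrap/De Giorgi-type iteration via Proposition \ref{lemA4}); and the scalar $\cH^1_p$ and $\bH^{-1}_p$ solvability for divergence-form equations used inside those proofs is replaced by its system version (Proposition \ref{lemA4} and the system analogue of Theorem 5.5 of \cite{Krylov_2007_mixed_VMO}, namely Theorem 3.1 of \cite{DK08}).

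Concretely, first I would record the system version of Corollary \ref{cor2.3} (solvability of the initial-value problem for $P_0$ on a finite cylinder), which is an immediate consequence of Theorem \ref{thm4.2} by the same cut-off-in-time trick. Next, using Theorem \ref{thm4.2} (for the finite-cylinder solvability) together with Lemma \ref{lem4.4} in place of Lemma \ref{lemma 9.10.1}, I would reprove the analogue of Theorem \ref{thm12.02}, i.e. the mean-oscillation decay estimate
\begin{equation*}
\left(|D_{x'}^2\vu-(D_{x'}^2\vu)_{Q_r}|^p\right)_{Q_r}
\le N\kappa^{d+2}\left(|P_0\vu|^p\right)_{Q_{\kappa r}}
+N\kappa^{-p\alpha}\left(|D_{x'}^2\vu|^p\right)_{Q_{\kappa r}};
\end{equation*}
this is the standard combination (cf. the proof of Theorem 4.5 of \cite{DongKrylov}) of solving away the inhomogeneous part on the cylinder and applying the interior estimate of Lemma \ref{lem4.4} to the homogeneous remainder. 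Then, freezing the coefficients $\bar A^{11}(t)$, $\bar A^{ij}(t,x^1)$ and using Hölder's inequality exactly as in the proof of Theorem \ref{thm4.1}, together with Assumption \ref{assump5} ($\gamma$) to bound the coefficient-oscillation factor by $N(\kappa r)^{d+2}\gamma$, I would obtain the system analogue of \eqref{eq13.5.05}, with constants depending now also on $m$.

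Finally, I would reprove the system analogue of Lemma \ref{lem2.1} — bounding $\|D^2\vu\|_{L_p}+\|\vu_t\|_{L_p}$ by $N\|P\vu\|_{L_p}+N\|D_{x'}^2\vu\|_{L_p}$ for $\vu$ supported in a small cylinder — which here relies on the system version of Corollary 3.7 of \cite{Krylov_2005} for the operator $-\vu_t+A^{11}D_1^2\vu+\Delta_{d-1}\vu$ together with the interpolation inequality \eqref{3.57} applied componentwise; combining this with the system analogue of \eqref{eq13.5.05}, taking $q=(1+p)/2$ and $\tau=(1+3p)/(2+2p)$ so that $p>\tau q$, and applying Fefferman--Stein and Hardy--Littlewood yields, after choosing $\kappa$ large and then $\gamma$ small, the system analogue of Lemma \ref{lem4.2}. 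A partition of unity and Agmon's device then upgrade this to the full a priori estimate in assertion i), assertion ii) follows by the method of continuity, and assertion iii) is contained in Theorem \ref{thm4.2}. The only genuinely new point compared with Section \ref{sec3} — and hence the main obstacle — is the interior Hölder/oscillation estimate for homogeneous systems $P_0\vu=0$: since the Krylov--Safonov estimate is unavailable, Lemma \ref{lem4.4} must instead be derived from the divergence-form system estimate of Proposition \ref{lemA4} via a bootstrap argument, and it is precisely here that the symmetry assumption on $A^{\beta 1}$, $\beta=2,\dots,d$, is used to rewrite $P_0$ in divergence form; everything else is a routine transcription of the scalar argument with $m$ added to the list of parameters.
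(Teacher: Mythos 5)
Your proposal follows essentially the same route as the paper: the paper proves Theorem \ref{thm4.6} exactly by substituting Theorem \ref{thm4.2} for Theorem \ref{thm2.2} and Lemma \ref{lem4.4} (via the divergence-form rewriting and Proposition \ref{lemA4}, where the symmetry of $A^{\beta 1}$ enters) for Lemma \ref{lemma 9.10.1}, then repeating the Section \ref{sec3} scheme — analogues of Theorems \ref{thm12.02} and \ref{thm4.1}, Lemmas \ref{lem2.1} and \ref{lem4.2}, Fefferman--Stein and Hardy--Littlewood, partition of unity, Agmon's device, and the method of continuity. Your write-up is correct and matches this argument in all essentials.
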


We remark that Theorem \ref{thm4.6} is new even when $p>2$. It is also worth noting that under Assumption \ref{assump5} ($\gamma$) solutions of the parabolic systems are H\"older continuous if $p>(d+2)/2$. Unlike scalar equations, for which we have the Krylov--Safonov H\"older estimate, generally this property is not possessed by solutions to systems (see, for instance, \cite{MaPaSo00}).

\section{Hierarchically partially  BMO coefficients}			\label{sec6}

We consider in this section parabolic equations with more general coefficients. The assumption is that for $ij>1$ $a^{ij}$ are measurable in $(t,x^1,\ldots,x^{\pi_{ij}})$ and BMO in the other coordinates, where
$$
\pi_{ij}=\max(i,j)-1.
$$
In addition, we suppose $a^{11}$ is measurable in $t$ (or $x^1$) and BMO in the other coordinates. More precise assumptions are stated below. To the best of our knowledge, this class of coefficients has been considered before.

We recall the definition of $a_{R,1}^{11,\#}$ and $a_{R,2}^{11,\#}$ in the introduction. For $R>0$, we denote
$$
a^{\#}_R=
\sup_{(t_0,x_0)\in \bR^{d+1}} \sup_{r\le R}\sup_{(i,j)\neq (1,1)}\dashint_{Q_r(t_0,x_0)}|a^{ij}-\bar a^{ij}|\,dx\,dt,
$$
where for each $Q_r(t_0,x_0)$ and $(i,j)\neq (1,1)$,
$$
\bar a^{ij}=\bar a^{ij}(t,x^1,\ldots,x^{\pi_{ij}})
$$
$$
=\dashint_{B_r^{d-\pi_{ij}}(x_0^{\pi_{ij}+1},\ldots,x_0^d)}
a^{ij}(t,x^1,...,x^{\pi_{ij}},
y^{\pi_{ij}+1},\ldots,y^d)\,dy^{\pi_{ij}+1}\ldots dy^d.
$$
We impose either one of the following assumptions on $a^{ij}$.
\begin{assumption}[$\gamma$]
                                            \label{assump7}
There exists a positive constant $R_0$ such that
$$
a^{11,\#}_{R_0,1}+a_R^{\#}\le \gamma.
$$
\end{assumption}
\begin{assumption}[$\gamma$]
                                            \label{assump7b}
There exists a positive constant $R_0$ such that
$$
a^{11,\#}_{R_0,2}+a_R^{\#}\le \gamma.
$$
Clearly the assumptions above are weaker than those in Theorems \ref{thm2} and \ref{thm3} in terms of the regularity of $a^{ij}$ for $i>2$ or $j>2$.

Following the idea in Section \ref{sec3} and \ref{sec4}, to prove the $W^{1,2}_p$-solvability of nondivergence form equations under these assumptions, in Subsection \ref{subsec6.1} we first establish the corresponding $\cH^1_p$-solvability of divergence form equations under the same assumptions on the coefficients. In Subsection \ref{subsec6.2}, we prove the solvability for nondivergence equations.
\end{assumption}

\subsection{Solvability of divergence form equations} \label{subsec6.1}
Let us consider the $\cH^1_{p}$-solvability of parabolic equations in divergence form:
\begin{equation*}
\cP u-\lambda u=\Div g +f,
\end{equation*}
where $\lambda\ge 0$ is a constant and
\begin{align*}
\cP u&=-u_t+D_j(a^{ij}D_iu)+D_i(a^{i}u)+ b^{i}D_iu+cu,\\
g&=(g^1,g^2,\ldots,g^d).
\end{align*}

We assume the same boundedness and ellipticity condition on $a^{ij}$, $b^i$ and $c$ as in the introduction. We also assume that $a^i$ are measurable and bounded by $K$.

\begin{theorem}
                                            \label{thm5.1}
Let $p\in (1,\infty)$, there exists a constant $\gamma=\gamma(d,p,\delta)>0$
such that under Assumption \ref{assump7} ($\gamma$) (or  \ref{assump7b} ($\gamma$)) for any $T\in (-\infty,+\infty]$ the following holds.

i) For any $ u \in \cH_p^{1}(\bR^{d+1}_T)$,
\begin{equation*}
\lambda \|  u \|_{L_p(\bR^{d+1}_T)}
+ \sqrt{\lambda} \|Du\|_{L_p(\bR^{d+1}_T)}
\le N(\sqrt \lambda+1) \| \cP  u - \lambda  u \|_{\bH^{-1}_p(\bR^{d+1}_T)}
\end{equation*}
provided that $\lambda\ge \lambda_0$, where $\lambda_0\ge 0$ depends only
on $d$, $p$, $\delta$, $K$ and $R_0$,
and $N$ depends only on $d$, $p$ and $\delta$.

ii) For any $\lambda > \lambda_0$ and
$ f, g\in L_p(\bR^{d+1}_T)$,
there exists a unique $ u \in \cH_{p}^{1}(\bR^{d+1}_T)$ solving
$$\cP u - \lambda  u =  f+\Div g$$ in $\bR_T^{d+1}$. And $u$ satisfies the estimate
$$
\lambda \|  u \|_{L_p(\bR^{d+1}_T)}
+ \sqrt{\lambda} \|Du \|_{L_p(\bR^{d+1}_T)}
\le N\sqrt\lambda\|g\|_{L_p(\bR^{d+1}_T)} +N\|f\|_{L_p(\bR^{d+1}_T)}.
$$

iii) In the case that
$a^{11}=a^{11}(t)$ (or $a^{11}=a^{11}(x^1)$ respectively), and
$$
a^{ij}=a^{ij}(t,x^1,\ldots,x^{\pi_{ij}}),\,\,\,
ij>1,\quad
a^j=b^j\equiv c\equiv 0,
$$
we can take
$\lambda_0=0$ in i) and ii).
\end{theorem}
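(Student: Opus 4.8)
The plan is to reduce the general hierarchical structure to the "one-variable" case treated in the Appendix (Propositions \ref{lemA1}--\ref{lemA3}) by an induction on the number of spatial coordinates in which the coefficients are allowed to be merely measurable, using the "breaking the symmetry" idea of \cite{Dong08,DongKim08a,DongKim08b}. As with the proof of Theorem \ref{thm2.2} and Theorem \ref{thm3.2}, by the method of continuity it suffices to establish the a priori estimate in assertion i), and by the usual truncation-in-time argument (replacing $u$ by the solution $w$ on $\bR^{d+1}$ with right-hand side $\chi_{t<T}(\cP u-\lambda u)$) we may take $T=\infty$. A partition of unity together with Agmon's device will then upgrade the localized estimate to the global one, so the heart of the matter is a local sharp-function estimate for $Du$.

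First I would prove the base case $\pi_{ij}\le 1$ for all $ij$, i.e., $a^{ij}=a^{ij}(t,x^1)$ for $ij>1$ and $a^{11}=a^{11}(t)$ (or $a^{11}(x^1)$), with the lower-order terms absent: this is precisely covered by the $\cH^1_p$-estimate of Propositions \ref{lemA1} and \ref{lemA3} (respectively \ref{lemA2}) in the Appendix, after, in the $a^{11}=a^{11}(x^1)$ case, the bi-Lipschitz change of variables $y^1=\int_0^{x^1}(a^{11})^{-1}\,ds$ exactly as in the proof of Theorem \ref{thm3.2}. For the inductive step, suppose the result is known whenever the measurable directions are $(t,x^1,\dots,x^{k})$, and consider coefficients with $\pi_{ij}\le k+1$. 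Freeze the extra variables: for a cube $Q_{\kappa r}(t_0,x_0)$ replace each $a^{ij}$ with $\pi_{ij}=k+1$ by its average $\bar a^{ij}$ over $x^{k+2},\dots,x^d$, and note that the $(i,j)$ entries with $\max(i,j)\le k+1$ already depend only on $(t,x^1,\dots,x^{k+1})$, so the frozen operator falls under the inductive hypothesis with the roles of the coordinates $(x^1,\dots,x^{k+1})$ played by $(x^1,\dots,x^k)$ plus one more measurable direction — here one invokes the version of the induction hypothesis in which the $(k+1)$-st coordinate is the "new" measurable one and derives, via Lemma-type sharp-function bounds (the analogue of Lemma \ref{lemma 9.10.1} / the interior estimates behind Theorem \ref{thm12.02}), a decay estimate for $\osc$ of $D_{x''}u$, $x''=(x^{k+2},\dots,x^d)$, in $Q_r$ controlled by $\kappa^{-\alpha}$ times an average over $Q_{\kappa r}$ plus $\kappa^{d+2}$ times an average of $\cP u-\lambda u$. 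The perturbation term coming from $\sum(a^{ij}-\bar a^{ij})D_{ij}u$ is handled by Hölder's inequality with exponents $(\tau,\sigma)$, $1/\tau+1/\sigma=1$, exactly as in \eqref{13.5.53}: its contribution is $\le N\kappa^{d+2}\gamma^{1/\sigma}(|Du|^{p\tau})^{1/\tau}$ on $Q_{\kappa r}$ thanks to Assumption \ref{assump7} ($\gamma$) (or \ref{assump7b} ($\gamma$)).

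Having the sharp-function estimate, I would apply the Fefferman--Stein theorem and the Hardy--Littlewood maximal function theorem with a lower exponent $q\in(1,p)$ and $q\tau<p$ (as in Lemma \ref{lem4.2}), obtaining
$$
\|Du\|_{L_p}\le N\kappa^{(d+2)/q}\|\cP u-\lambda u\|_{\bH^{-1}_p}
+N\big(\kappa^{(d+2)/q}\gamma^{1/(q\sigma)}+\kappa^{-\alpha}\big)\|Du\|_{L_p};
$$
choosing $\kappa$ large and then $\gamma$ small absorbs the last term, completing the induction. The passage between the nondivergence maximal-function machinery and the divergence setting here requires phrasing everything in terms of $\bH^{-1}_p$ norms of $\Div g+f$, which is why the constant $N(\sqrt\lambda+1)$ appears. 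The solvability in ii) then follows from i) by the method of continuity (interpolating to the simple coefficients of iii), whose solvability with $\lambda_0=0$ is the Appendix result), and the sharpened estimate in ii) with the explicit $\|g\|_{L_p}$, $\|f\|_{L_p}$ dependence is read off by choosing the $\bH^{-1}_p$ representative optimally.

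I expect the main obstacle to be the bookkeeping in the inductive step: one must be careful that after freezing the $(k+1)$-st block of variables the reduced operator genuinely satisfies the inductive hypothesis (in particular that the symmetry/structural requirements needed for the Appendix propositions are preserved under averaging, and that the entries with $\max(i,j)=k+1$ — which are measurable in $x^{k+1}$ — play the role of the "$a^{ij}(t,x^1)$" coefficients one level down), and that the error estimate from the difference $a^{ij}-\bar a^{ij}$ is measured against the correct cube $Q=\min(Q_{\kappa r},Q_{R_0})$ so that the bound $N\gamma|Q|\le N(\kappa r)^{d+2}\gamma$ survives. Once the correspondence of coordinates is set up cleanly, each individual estimate is a routine repetition of the arguments in Sections \ref{sec3} and \ref{sec4}.
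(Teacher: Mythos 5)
There is a genuine gap at the absorption step. Your sharp-function estimate (the analogue of Lemma \ref{lem5.6}) controls only the mean oscillations of the ``good'' derivatives $D_{x''}u$, $x''=(x^{k+2},\ldots,x^d)$, so Fefferman--Stein and Hardy--Littlewood give a bound on $\sum_{i\ge k+2}\|D_i u\|_{L_p}$, not on $\|Du\|_{L_p}$; yet your displayed inequality puts the full gradient on the left and then absorbs it. To close the loop you must first show that the remaining derivatives are controlled by the good ones, i.e.\ an estimate of the form $\|Du\|_{L_p}\le N\|g\|_{L_p}+N\sum_{i\ge k+2}\|D_i u\|_{L_p}$ for compactly supported solutions. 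In the paper this is exactly Corollary \ref{cor5.5}, proved by an anisotropic dilation (shrinking $t,x^1,x^2$ by a factor $\mu$) which turns the block of coefficients measurable in $(t,x^1,x^2)$ into a small perturbation of an operator covered by Proposition \ref{lemA2}, so that the bad first derivatives can be absorbed after choosing $\mu$ small. Nothing in your proposal plays this role, and without it the self-improvement argument does not terminate: freezing the coefficients with $\pi_{ij}=k+1$ only yields oscillation decay in the directions where they have small BMO seminorm, never in $x^1,\ldots,x^{k+1}$.

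A secondary deviation: you claim to reach all $p\in(1,\infty)$ in one stroke by running the sharp-function machinery at a low exponent $q\in(1,p)$ ``as in Lemma \ref{lem4.2}''. The paper instead derives the mean-oscillation estimate at exponent $2$ (following \cite{Dong08}), proves the gradient estimate for $p>2$ (Theorem \ref{thm5.7}), and then obtains $1<p<2$ by the standard duality argument available for divergence form operators, finishing with a partition of unity and Agmon's idea. Your variant would require the oscillation estimate for the frozen ``simple'' operator with $L_q$ averages on the right for $q$ close to $1$, which in turn needs $L_q$ solvability and interior H\"older-type estimates for that operator at the same hierarchical level; this is not supplied by your induction as stated (after freezing, the entries with $\pi_{ij}=k+1$ depend on $(t,x^1,\ldots,x^{k+1})$, which is the simple case at the \emph{current} level, not the inductive hypothesis one level down). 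Either adopt the paper's $p>2$ plus duality route, or set up the induction so that the simple-coefficient solvability at level $k+1$ (the analogue of Theorem \ref{thm5.1} iii)) is proved first, before the perturbation argument.
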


The theorem above generalized the main result of \cite{Dong08}. The idea in the proof is to ``break the symmetry'' of the coordinates.

For simplicity, we only give a proof of Theorem \ref{thm5.1} when $d=3$, or $d>3$ and $\pi_{ij}$ is replaced by $\tilde \pi_{ij}:=\min(\pi_{ij},2)$. The general case can be proved by an induction. We remark that the assumptions of Theorems \ref{thm2} and \ref{thm3} (and those in \cite{Dong08}, \cite{DongKim08b}) correspond to the case when $\pi_{ij}$ is replaced by $\min(\pi_{ij},1)$.

First let us consider the situation that Assumption \ref{assump7b} holds. We have the following corollary of Proposition \ref{lemA2}.

\begin{corollary}
				\label{cor5.5}
Let $p\in (1,\infty)$ and $g\in L_p$. Suppose $a^i\equiv b^i\equiv c\equiv 0$. Let $\mu_1$ and $\gamma_1$ be the constants in Proposition \ref{lemA2}. Then there exist  constants $\mu_2\in [\mu_1,\infty)$, $\gamma_2\in (0,\gamma_1]$ and $N$ depending only on $d,p$ and $\delta$
such that, under Assumption \ref{assump7b} ($\gamma_2$) with $\tilde \pi_{ij}$ in place of $\pi_{ij}$, for any $u\in C_0^\infty(Q_{\mu_2^{-1}R_0})$ satisfying $\cP u=\Div g$, we have
\begin{equation*}
\|Du\|_{L_p}\le N\|g\|_{L_p}+N\sum_{j=3}^d \|D_ju\|_{L_p}.
\end{equation*}
\end{corollary}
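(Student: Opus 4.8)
The plan is to reduce Corollary \ref{cor5.5} to Proposition \ref{lemA2} by freezing the coefficients in the variable $x^2$ (the last variable in which the relevant $a^{ij}$ with $\pi_{ij}\le 2$ fail to be measurable before reaching the BMO-only regime), so that after this freezing one is left with coefficients that depend only on $(t,x^1,x^2)$, which is exactly the class handled by Proposition \ref{lemA2}. Concretely, fix $(t_0,x_0)$ and for a cube $Q_{\mu_2^{-1}R_0}$ containing the support of $u$, replace each $a^{ij}$ with $\pi_{ij}\le 2$ by its average $\bar a^{ij}(t,x^1,\dots,x^{\tilde\pi_{ij}})$ over the remaining coordinates (for $\pi_{ij}=2$ this is an average in $(x^3,\dots,x^d)$, for $\pi_{ij}\le 1$ an average in $(x^2,\dots,x^d)$, etc.), and replace $a^{11}$ by $\hat a^{11}(x^1)$ as in the definition of $a^{11,\#}_{R_0,2}$. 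Call the resulting divergence-form operator $\bar{\cP}$; its coefficients depend only on $(t,x^1,x^2)$ and it satisfies the same ellipticity bounds, so Proposition \ref{lemA2} applies to $\bar{\cP}$ with constants $\mu_1,\gamma_1$ depending only on $d,p,\delta$.

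The second step is to write $\cP u = \bar{\cP}u + \Div\big((\bar a^{ij}-a^{ij})D_i u\big)^{j} + D_1\big((\hat a^{11}-a^{11})D_1 u\big)$, so that $u$ satisfies $\bar{\cP}u = \Div \tilde g$ with $\tilde g^j = g^j - (\bar a^{ij}-a^{ij})D_i u$ (and the $a^{11}$ term absorbed similarly). Applying Proposition \ref{lemA2} to $\bar{\cP}$ on $Q_{\mu_1^{-1}(\mu_2^{-1}R_0)}$ gives, after choosing $\mu_2\ge\mu_1$ so that the support condition is met,
\begin{equation*}
\|Du\|_{L_p}\le N\|g\|_{L_p}+N\|(\bar a^{ij}-a^{ij})D_i u\|_{L_p}+N\sum_{j=3}^d\|D_j u\|_{L_p}.
\end{equation*}
The middle term is where the smallness of the BMO seminorm enters: since $|\bar a^{ij}-a^{ij}|$ has small mean oscillation over all cubes of radius $\le R_0$, a standard argument (Hölder's inequality in the spirit of the proof of Theorem \ref{thm4.1}, combined with the Hardy–Littlewood maximal function theorem and the fact that one may take an intermediate exponent, exactly as in \cite{Dong08}) bounds $\|(\bar a^{ij}-a^{ij})D_i u\|_{L_p}$ by $N\gamma_2^{\epsilon}\|Du\|_{L_p}$ for a suitable $\epsilon>0$. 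Choosing $\gamma_2$ small enough that $N\gamma_2^{\epsilon}\le 1/2$, this term is absorbed into the left-hand side, yielding the claimed estimate.

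The main obstacle is the treatment of the term $\|(\bar a^{ij}-a^{ij})D_i u\|_{L_p}$: unlike in Theorem \ref{thm4.1}, here we do not have a maximal-function estimate for $Du$ at our disposal a priori, so the absorption has to be done carefully. One cannot simply bound the product pointwise; instead one needs the perturbation to be small in an $L_p$-averaged sense over the fixed cube $Q_{\mu_2^{-1}R_0}$, and the cleanest route is to invoke the same mechanism used in \cite{Dong08} (and recalled in the proof of Lemma \ref{lem4.2}): apply \eqref{eq13.5.05}-type reasoning, or more directly the fact that $\|a\,v\|_{L_p}\le \||a|^{p\sigma}\|_{\cdot}^{1/(p\sigma)}\||v|^{p\tau}\|_{\cdot}^{1/(p\tau)}$-type splitting combined with the boundedness of the maximal operator, with $\tau>1$ chosen so that $p\tau$ is still an admissible exponent and the BMO smallness of $\bar a^{ij}-a^{ij}$ controls the $a$-factor. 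Once this localized perturbation estimate is in place, the rest is bookkeeping: one only has to track the chain of constants $\mu_2=\mu_2(\mu_1)$, $\gamma_2=\gamma_2(\gamma_1,\delta,p)$, all ultimately depending only on $d,p,\delta$, and the case $a^{11}=a^{11}(x^1)$ is handled identically since $\hat a^{11}$ is then exact.
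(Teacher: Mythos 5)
Your reduction has a genuine gap at its very first step. After your freezing, the operator $\bar{\cP}$ has coefficients $\bar a^{ij}(t,x^1,x^2)$ for $\max(i,j)\ge 3$, i.e.\ coefficients that are merely measurable in $x^2$. This class is \emph{not} covered by Proposition \ref{lemA2}: that proposition requires Assumption \ref{assump2} ($\gamma_1$) ii) and iii), namely $a^{11}$ measurable in $x^1$ and all other $a^{ij}$ measurable in $(t,x^1)$ only, with small mean oscillation in the remaining variables. A coefficient with genuine (large) oscillation in $x^2$ violates iii), so the sentence ``its coefficients depend only on $(t,x^1,x^2)$, so Proposition \ref{lemA2} applies'' is exactly the point that fails — handling this extra measurable direction is the whole content of the hierarchical extension. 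If instead you freeze those coefficients all the way down to $(t,x^1)$ so that Proposition \ref{lemA2} does apply, then for the terms $D_j(a^{ij}D_iu)$ with $i\le 2$, $j\ge 3$ the difference $\bar a^{ij}-a^{ij}$ is of order one (no smallness is assumed in $x^2$), and the error can no longer be absorbed; and the terms $D_ju$, $j\ge 3$, alone do not save you because these error terms contain $D_1u,D_2u$.

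There is a second, independent gap in your absorption of $\|(\bar a^{ij}-a^{ij})D_iu\|_{L_p}$. Small mean oscillation gives control of $\|\bar a^{ij}-a^{ij}\|_{L_{p\sigma}(Q)}$ on the fixed cube, and H\"older then bounds the product by $N\gamma^{1/( p\sigma)}\|Du\|_{L_{p\tau}}$ with $\tau>1$, not by $N\gamma^{\epsilon}\|Du\|_{L_p}$; at this stage no sharp-function or maximal-function estimate for $Du$ is available (that machinery appears only later, in Lemma \ref{lem5.6} and Theorem \ref{thm5.7}, precisely to handle such terms), so the claimed absorption cannot be carried out inside this corollary. The paper's proof avoids both issues by a different mechanism: an anisotropic dilation $\bar u(t,x)=u(\mu^{-2}t,\mu^{-1}x^1,\mu^{-1}x^2,x^3,\dots,x^d)$, after which the equation is rewritten with the operator $\mathfrak P$ consisting of the $i,j\le 2$ block (whose coefficients do satisfy Assumption \ref{assump2} ii) and iii) after rescaling, whence $\gamma_2=\mu^{-4}\gamma_1$) plus $\sum_{j\ge 3}D_j^2$; every term involving a coefficient that is measurable in $x^2$ is moved to the right-hand side carrying either a factor $\mu^{-1}$ in front of $D_1\bar u,D_2\bar u$ (absorbed by choosing $\mu$ large) or a harmless $D_j\bar u$, $j\ge 3$, which is allowed in the corollary's right-hand side. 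No smallness of the BMO seminorm is used for absorption there at all.
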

\begin{proof}
Let $\mu_2=\mu_1 \mu$ and $\gamma_2=\mu^{-4}\gamma_1$, where $\mu\in [1,\infty)$ is a number to be specified later. Denote
$$
\bar u(t,x)=u(\mu^{-2}t,\mu^{-1}x^1,\mu^{-1}x^2,x^3,\ldots,x^d).
$$ It is clear that $\bar u\in C_0^\infty(Q_{\mu_1^{-1}R_0})$, and $\bar u$ satisfies
$$
-\mu^2 \bar u_t+\sum_{i,j=1}^2 D_j(\mu^2 \bar a^{ij}D_i \bar u)+\sum_{i=1}^2\sum_{j>2}D_j(\mu \bar a^{ij} D_i\bar u)_{x^j}
$$
$$
+\sum_{j=1}^2\sum_{i>2}D_j (\mu \bar a^{ij} D_i \bar u)+\sum_{i>2,j>2}D_j (\bar a^{ij}D_i \bar u)=\Div(\bar g),
$$
where
\begin{align*}
\bar a^{ij}&=a^{ij}(\mu^{-2}t,\mu^{-1}x^1,\mu^{-1}x^2,x^3,\ldots,x^d),\\
\bar g^i&=\mu g^i( \mu^{-2}t,\mu^{-1}x^1,\mu^{-1}x^2,x^3,\ldots,x^d),\,\,i=1,2,\\
\bar g^i&=g^i( \mu^{-2}t,\mu^{-1}x^1,\mu^{-1}x^2,x^3,\ldots,x^d),\,\,i>2.
\end{align*}
Simple calculation shows that $\bar a^{ij}$ satisfy Assumption \ref{assump7b} ($\gamma_1$) with $\tilde \pi_{ij}$ in place of $\pi_{ij}$.

We define an operator $\mathfrak P$ by
$$
\mathfrak Pu=-u_t+\sum_{i,j=1}^2 D_j(\bar a^{ij} D_i u)+\sum_{j=3}^dD_j^2 u.
$$
Then we have
\begin{equation*}
\mathfrak P\bar u=\Div \mathfrak g,
\end{equation*}
where 
$$
\mathfrak g^k=\mu^{-2}\bar g^k-\mu^{-1}\sum_{i>2}\bar a^{ik}D_i \bar u,\,\,k=1,2,
$$
$$
\mathfrak g^k=\mu^{-2}\bar g^k-\mu^{-1}\sum_{i=1}^2 \bar a^{ik}D_i \bar u-
\mu^{-2}\sum_{i>2}\bar a^{ik}D_i \bar u+D_k \bar u,\,\,k\ge 3.
$$
Since the coefficients of $\mathfrak P$ satisfy Assumption \ref{assump2} ($\gamma_1$) ii) and iii), we get from Proposition \ref{lemA2} that
$$
\|D\bar u\|_{L_p}\le N_1\|\mathfrak g\|_{L_p},
$$
which implies
$$
\sum_{i=1}^2\|D_i u\|_{L_p}\le N_1\|g\|_{L_p}+N_1\mu^{-1}\sum_{i=1}^2\|D_i u\|_{L_p}+N_1\mu\sum_{i=3}^d\|D_i u\|_{L_p},
$$
where $N_1=N_1(d,p,\delta)>0$. To finish the proof, it suffices to choose $\mu=(2N_1)^{-1}$.
\end{proof}

Because of Corollary \ref{cor5.5}, we only need to obtain a good estimate of $\|D_j u\|_{L_p}$ for $j\ge 3$. Following the lines of Section 5 \cite{Dong08}, we have the next estimate of mean oscillations.

\begin{lemma}
                            \label{lem5.6}
Let $a^i=b^i=0$, $c=0$, $\sigma,\tau\in (1,\infty)$ satisfying $1/\sigma+1/\tau=1$. Assume $u\in \cH^1_{2,\text{loc}}$ and $\cP u=\Div g$, where $g\in L_{2,\text{loc}}$. Then under Assumption \ref{assump7b} ($\gamma$) with $\tilde \pi_{ij}$ in place of $\pi_{ij}$, there exist an $\alpha=\alpha(d,\delta)\in (0,1)$ and a positive constant $N$ depending only on $d$, $\sigma$ and $\delta$ such that
\begin{multline*}
\sum_{i=3}^d \left(|D_i u-(D_i u)_{Q_r(t_0,x_0)}|^2\right)_{Q_r(t_0,x_0)}\leq N\nu^{-2\alpha}
\left(|Du|^2\right)_{Q_{\nu r}(t_0,x_0)}\\
+N\nu^{d+2}\left( (|g|^2)_{Q_{\nu r}(t_0,x_0)}+\gamma^{1/\sigma}
(|Du|^{2\tau})_{Q_{\nu r}(t_0,x_0)}^{1/\tau}\right),
\end{multline*}
for any $r\in (0,\infty)$, $\nu\ge 4$ and $(t_0,x_0)\in \bR^{d+1}$, provided that $u$ vanishes outside $Q_{R_0}$.
\end{lemma}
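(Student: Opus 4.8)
The plan is to follow the now-standard scheme for proving mean-oscillation estimates for leading coefficients, adapted to the "hierarchical" setting with $\tilde\pi_{ij}\le 2$. Fix $(t_0,x_0)$, $\nu\ge 4$, $r>0$, and put $Q_{\nu r}=Q_{\nu r}(t_0,x_0)$. The idea is to freeze the $x^3,\ldots,x^d$-dependence of the coefficients $a^{ij}$ by replacing $a^{ij}$ with its partial average $\bar a^{ij}=\bar a^{ij}(t,x^1,\ldots,x^{\tilde\pi_{ij}})$ over the appropriate lower-dimensional cube, and to split $u$ accordingly. Write $\cP u=\Div g$ and let $\bar\cP$ be the operator with $a^{ij}$ replaced by $\bar a^{ij}$. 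Then $\bar\cP u=\Div(g+h)$, where $h^j=\sum_i(\bar a^{ij}-a^{ij})D_iu$. I would then decompose $u=w+v$ on $Q_{\nu r/2}$, where $w\in\cH^1_2$ solves $\bar\cP w-\lambda w=\Div(g+h)$ (localized, via the solvability in Theorem \ref{thm5.1} iii) applied to the simple operator $\bar\cP$, whose coefficients satisfy Assumption \ref{assump7b} with $\tilde\pi_{ij}$ and the required structure), and $v=u-w$ solves the homogeneous equation $\bar\cP v=0$ in $Q_{\nu r/2}$.

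For the homogeneous piece $v$, the key input is an interior H\"older estimate for $D_ju$, $j\ge 3$: since $\bar a^{ij}$ does not depend on $x^j$ when $j\ge 3$ (because $\tilde\pi_{ij}=\min(\max(i,j)-1,2)\le j-1<j$), the derivative $D_jv$ again solves a divergence-form equation of the same type, and by iterating this observation (as in the De Giorgi--Nash--Moser theory used in \cite{Dong08}) one obtains
$$
\osc_{Q_{\rho}}D_jv\le N(\rho/R)^{\alpha}\bigl(|D_jv|^2\bigr)_{Q_R}^{1/2}
$$
for concentric cubes $Q_\rho\subset Q_R\subset Q_{\nu r/2}$, with $\alpha=\alpha(d,\delta)\in(0,1)$. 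From this, a standard averaging over $Q_r$ gives
$$
\sum_{j=3}^d\bigl(|D_jv-(D_jv)_{Q_r}|^2\bigr)_{Q_r}\le N\nu^{-2\alpha}\bigl(|Dv|^2\bigr)_{Q_{\nu r/2}}.
$$
For the inhomogeneous piece $w$, I would use the $L_2$ (more precisely $\cH^1_2$) a priori estimate from Theorem \ref{thm5.1} iii) to bound $(|Dw|^2)_{Q_{\nu r}}$ by $(|g+h|^2)_{Q_{\nu r}}$, and then control $h$ by H\"older's inequality: $|\bar a^{ij}-a^{ij}|$ has small average by Assumption \ref{assump7b}($\gamma$), so
$$
\bigl(|h|^2\bigr)_{Q_{\nu r}}\le N\Bigl(\,\dashint_{Q_{\nu r}}|\bar a^{ij}-a^{ij}|^{2\sigma}\Bigr)^{1/\sigma}\bigl(|Du|^{2\tau}\bigr)_{Q_{\nu r}}^{1/\tau}\le N\gamma^{1/\sigma}\bigl(|Du|^{2\tau}\bigr)_{Q_{\nu r}}^{1/\tau},
$$
using that $|\bar a^{ij}-a^{ij}|\le 2\delta^{-1}$ to trade the $2\sigma$-th power for the first power. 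Combining the two pieces via the triangle inequality, absorbing the factors of $\nu^{d+2}$ that arise from passing between averages on $Q_r$ and on $Q_{\nu r}$, and using $\bigl(|Du|^2\bigr)_{Q_{\nu r/2}}\le N\nu^{d+2}\cdots$ type volume comparisons, yields the claimed estimate.

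The main obstacle I anticipate is the homogeneous-part H\"older estimate for $D_ju$ with $j\ge 3$ and the bookkeeping of which variables each $\bar a^{ij}$ still depends on. One must verify carefully that after averaging out $x^{\tilde\pi_{ij}+1},\ldots,x^d$, the frozen operator $\bar\cP$ genuinely has coefficients of the form required by Theorem \ref{thm5.1} iii) (i.e., $\bar a^{11}=\bar a^{11}(x^1)$ and $\bar a^{ij}=\bar a^{ij}(t,x^1,\ldots,x^{\tilde\pi_{ij}})$ for $ij>1$, with the structural ordering intact), so that the local solvability and the iterated interior regularity for the tangential derivatives $D_j$, $j\ge 3$, are available; this is exactly the place where the hierarchical structure $\pi_{ij}=\max(i,j)-1$ is used, and where one mirrors the argument of Section 5 of \cite{Dong08}. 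A secondary technical point is the localization: replacing $u$ (which only vanishes outside $Q_{R_0}$, not inside $Q_{\nu r}$) by a cutoff and handling the commutator terms, which contribute lower-order terms absorbed into the $\nu^{-2\alpha}(|Du|^2)$ and $\nu^{d+2}(|g|^2)$ terms on the right-hand side.
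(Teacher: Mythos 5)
Your scheme---freeze the $x^3,\ldots,x^d$ dependence of the coefficients, split $u=v+w$ with $w$ solving the frozen equation with data $\Div\bigl((g+h)\chi_{Q_{\nu r}}\bigr)$, apply the De Giorgi--Nash interior H\"older estimate to $D_jv$, $j\ge 3$ (which solves the same frozen equation because no $\bar a^{ik}$ depends on $x^j$), and estimate $h=(\bar a^{ij}-a^{ij})D_iu$ by H\"older's inequality together with the small-BMO hypothesis---is exactly the route the paper intends: it offers no separate proof of Lemma \ref{lem5.6} beyond ``following the lines of Section 5 of \cite{Dong08}'', and the same steps appear in its proof of Theorem \ref{thm4.1}. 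One small remark: for this scalar lemma no iteration is needed; a single application of the De Giorgi--Nash estimate to $D_jv$ suffices, once a difference-quotient argument (using that the frozen coefficients are independent of $x^j$) shows $D_jv\in \cH^1_{2,\mathrm{loc}}$. The bootstrap is only required in the systems case (Proposition \ref{lemA4}). The extraneous $-\lambda w$ in your equation for $w$ is harmless (take $\lambda\downarrow 0$ or solve a Cauchy problem on $(-\infty,t_0)$).

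The one step that would fail as written is the bound $\bigl(\dashint_{Q_{\nu r}}|\bar a^{ij}-a^{ij}|^{2\sigma}\bigr)^{1/\sigma}\le N\gamma^{1/\sigma}$: Assumption \ref{assump7b} controls mean oscillations only over cylinders of radius at most $R_0$, so this is unavailable when $\nu r>R_0$. This is precisely where the hypothesis that $u$ vanishes outside $Q_{R_0}$ must be used, and not in the way your closing paragraph suggests. As in the proof of Theorem \ref{thm4.1}, define $\bar a^{ij}$ relative to $Q_{\nu r}(t_0,x_0)$ when $\nu r<R_0$ and relative to $Q_{R_0}$ when $\nu r\ge R_0$; since $Du$ is supported in $Q_{R_0}$, so is $h$, and then
$$
\int_{Q_{\nu r}\cap Q_{R_0}}|\bar a^{ij}-a^{ij}|^{2\sigma}\,dx\,dt
\le N(\delta,\sigma)\int_{Q}|\bar a^{ij}-a^{ij}|\,dx\,dt
\le N\gamma\,|Q|\le N\gamma(\nu r)^{d+2},
$$
which gives exactly the $\gamma^{1/\sigma}$ term in the statement, while the frozen coefficients remain of the admissible simple form so the homogeneous-part estimate still applies. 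In particular, no cutoff of $u$ and no commutator terms enter anywhere; multiplying $u$ by a cutoff would change the equation and is not how the support condition functions. With this correction your argument coincides with the paper's.
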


\begin{theorem}
			\label{thm5.7}
Let $p>2$, $a^i=b^i=c=0$. Let $\mu_2$ and $\gamma_2$ be the constants in Corollary \ref{cor5.5}. Then there exists a constant $\gamma_3\in (0,\gamma_2]$ depending only on $d,p$ and $\delta$ such that under Assumption \ref{assump7b} ($\gamma_3$) with $\tilde \pi_{ij}$ in place of $\pi_{ij}$, for any $u\in C_0^\infty(Q_{\mu_2^{-1}R_0})$ and $g\in L_p$ satisfying $\cP u=\Div g$, we have
\begin{equation*}
\|Du\|_{L_p}\le N\|g\|_{L_p}.
\end{equation*}
\end{theorem}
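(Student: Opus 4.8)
The plan is to run the Fefferman--Stein sharp function argument on the mean oscillation bound of Lemma~\ref{lem5.6}, exactly along the lines of the proof of Lemma~\ref{lem4.2}, and then to recover the full gradient from its ``$x^3,\dots,x^d$ part'' by means of Corollary~\ref{cor5.5}. First I would fix $\tau\in(1,\infty)$ with $\tau<p/2$, which is possible because $p>2$, and let $\sigma$ be its conjugate exponent, $1/\tau+1/\sigma=1$. Given $u\in C_0^\infty(Q_{\mu_2^{-1}R_0})$ with $\cP u=\Div g$ and $g\in L_p$, I note that $u$ vanishes outside $Q_{R_0}$ since $\mu_2\ge 1$, and that $g\in L_{2,\text{loc}}$ since $p>2$, so Lemma~\ref{lem5.6} applies for any admissibly small $\gamma$. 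Taking square roots in that lemma, using $(a+b+c)^{1/2}\le a^{1/2}+b^{1/2}+c^{1/2}$, and passing to the supremum over parabolic cylinders through a point, I would obtain, pointwise on $\bR^{d+1}$ and for every $\nu\ge 4$,
\begin{equation*}
\sum_{i=3}^d (D_iu)^{\#}
\le N\nu^{-\alpha}\,\bM^{1/2}\!\bigl(|Du|^2\bigr)
+N\nu^{(d+2)/2}\Bigl(\bM^{1/2}\!\bigl(|g|^2\bigr)
+\gamma^{1/(2\sigma)}\,\bM^{1/(2\tau)}\!\bigl(|Du|^{2\tau}\bigr)\Bigr),
\end{equation*}
with $\alpha=\alpha(d,\delta)\in(0,1)$ and $N=N(d,\sigma,\delta)$ (the factor $d-2$ coming from the sum is absorbed into $N$).

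Next I would take $L_p$ norms. Using the Fefferman--Stein theorem to pass from $D_iu$ to $(D_iu)^\#$, and the Hardy--Littlewood maximal function theorem with the exponents $p/2>1$ and $p/(2\tau)>1$ (this is precisely where $p>2$ enters), I would get
\begin{equation*}
\sum_{i=3}^d\|D_iu\|_{L_p}
\le N\bigl(\nu^{-\alpha}+\nu^{(d+2)/2}\gamma^{1/(2\sigma)}\bigr)\|Du\|_{L_p}
+N\nu^{(d+2)/2}\|g\|_{L_p}.
\end{equation*}
Feeding this into Corollary~\ref{cor5.5}, whose constant $N_0=N_0(d,p,\delta)$ does not depend on $\nu$ or $\gamma$, I would arrive at
\begin{equation*}
\|Du\|_{L_p}
\le N_0N\bigl(\nu^{-\alpha}+\nu^{(d+2)/2}\gamma^{1/(2\sigma)}\bigr)\|Du\|_{L_p}
+\bigl(N_0N\nu^{(d+2)/2}+N_0\bigr)\|g\|_{L_p}.
\end{equation*}
Since $u\in C_0^\infty$, the left-hand side $\|Du\|_{L_p}$ is finite, so I would first choose $\nu$ large enough that $N_0N\nu^{-\alpha}\le 1/4$, then choose $\gamma_3\in(0,\gamma_2]$ small enough that $N_0N\nu^{(d+2)/2}\gamma_3^{1/(2\sigma)}\le 1/4$, and absorb the resulting $\|Du\|_{L_p}$ term on the right-hand side to conclude $\|Du\|_{L_p}\le N\|g\|_{L_p}$.

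This argument is essentially routine once Lemma~\ref{lem5.6} and Corollary~\ref{cor5.5} are in hand, so there is no genuine obstacle internal to this proof; the points that need care are the order of the two smallness choices (first $\nu$, then $\gamma_3$) and the fact that $N_0$ is a fixed constant which must be absorbed rather than treated as a free small parameter. The real limitation is the standing hypothesis $p>2$, which is exactly what keeps the two Hardy--Littlewood exponents $p/2$ and $p/(2\tau)$ above $1$; the range $p\in(1,2]$ for the divergence-form equation (Theorem~\ref{thm5.1}) will instead be reached afterward by a duality argument, and only then transferred to the nondivergence setting.
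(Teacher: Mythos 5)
Your proposal is correct and follows essentially the same route as the paper: the pointwise sharp function bound derived from Lemma \ref{lem5.6}, the Fefferman--Stein and Hardy--Littlewood theorems with exponents $p/2$ and $p/(2\tau)$ (requiring $p>2\tau$), recovery of the full gradient via Corollary \ref{cor5.5}, and absorption by choosing first $\nu$ large and then $\gamma_3$ small. The only differences are expository (e.g.\ spelling out why Lemma \ref{lem5.6} applies and where $p>2$ enters), which the paper leaves implicit.
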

\begin{proof}
Fix $\nu\ge 4$ and $\gamma_3\in (0,\gamma_2]$ to be specified later. Let $\alpha$ be the constant in Lemma \ref{lem5.6} and choose $\tau>1$ such that $p>2\tau$. Lemma \ref{lem5.6} implies
$$
\sum_{i=3}^d (D_i u)^{\#}\le N\nu^{-\alpha}\left(\bM(|Du|^2)\right)^{1/2}
+N\nu^{(d+2)/2}\left(\bM(|g|^2)\right)^{1/2}
$$
$$
+N\nu^{(d+2)/2}\gamma_3^{1/(2\sigma)}\left(\bM(|Du|^{2\tau})\right)^{1/(2\tau)}.
$$
By using the Fefferman--Stein theorem on
sharp functions, and the Hardy--Littlewood maximal function theorem, we get
\begin{equation}
					\label{eq4.24}
\sum_{i=3}^d\|D_i u\|_{L_p}\le N(\nu^{-\alpha}+\nu^{(d+2)/2}\gamma_3^{1/(2\sigma)})\|D u\|_{L_p}
+N\nu^{(d+2)/2}\|g\|_{L_p}.
\end{equation}
Since $\gamma_3\in (0,\gamma_2]$ and $u\in C_0^\infty(Q_{\mu_2^{-1}R_0})$, estimate \eqref{eq4.24} and Corollary \ref{cor5.5} give
\begin{equation}
					\label{eq4.26}
\|D u\|_{L_p}\le N(\nu^{-\alpha}+\nu^{(d+2)/2}\gamma_3^{1/(2\sigma)})\|D u\|_{L_p}
+N\nu^{(d+2)/2}\|g\|_{L_p}.
\end{equation}
We take $\nu$ sufficiently large, and $\gamma_3$ small such that in \eqref{eq4.26}
$$
N(\nu^{-\alpha}+\nu^{(d+2)/2}\gamma_3^{1/(2\sigma)})\le 1/2.
$$
The theorem is proved.
\end{proof}

We are now in the position to prove Theorem \ref{thm5.1}.

\begin{proof}[Proof of Theorem \ref{thm5.1}]
Recall that for simplicity we replace $\pi_{ij}$ in the assumption by $\tilde \pi_{ij}$. The general case can be done by an induction.

The case $p=2$ is classical. Suppose that Assumption \ref{assump7b} is true. For $p>2$, the result follows from Theorem \ref{thm5.7} by using a partition of unity and an idea by S. Agmon; see, for instance, \cite{Krylov_2005}. The case $1<p<2$ follows from a standard duality argument.

For the remaining case that Assumption \ref{assump7} is true, we follow the argument in this subsection and use the results in \cite{DongKim08b} instead of \cite{Dong08}. The theorem is proved.
\end{proof}

As before, we have the following solvability result for elliptic equations in divergence form:
\begin{equation*}
\cL u-\lambda u=\Div g +f,
\end{equation*}
where all the coefficients are time-independent and
$$
\cL u=D_j(a^{ij}D_iu)+D_i(a^{i}u)+ b^{i}D_iu+cu.
$$

\begin{theorem}
                                            \label{thm5.9}
Let $p\in (1,\infty)$, there exists a constant $\gamma=\gamma(d,p,\delta)>0$
such that under Assumption  \ref{assump7b} ($\gamma$) the following holds.

i) For any $ u \in W_p^{1}(\bR^{d})$,
\begin{equation*}
\lambda \|  u \|_{L_p(\bR^d)}
+ \sqrt{\lambda} \|Du\|_{L_p(\bR^{d})}
\le N(\sqrt \lambda+1) \|\cL  u - \lambda  u\|_{\bH^{-1}_p(\bR^{d})}
\end{equation*}
provided that $\lambda\ge \lambda_0$, where $\lambda_0\ge 0$ depends only
on $d$, $p$, $\delta$, $K$ and $R_0$, and $N$ depends only on $d$, $p$ and $\delta$.

ii) For any $\lambda > \lambda_0$ and
$ f, g\in L_p(\bR^{d})$,
there exists a unique $ u \in W_{p}^{1}(\bR^{d})$ solving
$$\cL u - \lambda  u =  f+\Div g$$ in $\bR^{d}$. And $u$ satisfies the estimate
$$
\lambda \|  u \|_{L_p(\bR^{d})}
+ \sqrt{\lambda} \|Du\|_{L_p(\bR^{d})}
\le N\sqrt\lambda\|g\|_{L_p(\bR^{d})} +N\|f\|_{L_p(\bR^{d})}.
$$

iii) In the case that
$$
a^{11}=a^{11}(x^1),\quad
a^{ij}=a^{ij}(x^1,\ldots,x^{\pi_{ij}}),\,\,
ij>1,\quad
a^j=b^j\equiv c\equiv 0,
$$
we can take
$\lambda_0=0$ in i) and ii).
\end{theorem}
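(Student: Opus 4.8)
The plan is to deduce Theorem \ref{thm5.9} from the already-established parabolic result Theorem \ref{thm5.1} by regarding a function on $\bR^d$ as a ($t$-independent) function on $\bR^{d+1}$. First I would record the routine fact that if $a^{11},a^{ij},a^j,b^i,c$ are the time-independent coefficients of $\cL$ satisfying Assumption \ref{assump7b} ($\gamma$) on $\bR^d$, then, viewed as functions on $\bR^{d+1}$, they satisfy the parabolic Assumption \ref{assump7b} ($\gamma$) with the \emph{same} $\gamma$: for a $t$-independent function the average over a parabolic cylinder $Q_r(t_0,x_0)$ coincides with the average over the spatial cube $B_r(x_0)$, and the partial averages $\hat a^{11}(x^1)$ and $\bar a^{ij}(x^1,\ldots,x^{\pi_{ij}})$ are unaffected. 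Hence the parabolic operator $\cP u=-u_t+\cL u$ built from these coefficients falls under the scope of Theorem \ref{thm5.1}.

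For the a priori estimate (needed in both i) and ii)), fix $\lambda>\lambda_0$, let $u\in W^1_p(\bR^d)$ solve $\cL u-\lambda u=f+\Div g$ on $\bR^d$, pick $\eta\in C_0^\infty(\bR)$, and set $\tilde u(t,x)=\eta(t)u(x)$, which lies in $\cH^1_p(\bR^{d+1})$. Since the coefficients are $t$-independent, a direct computation gives $\cP\tilde u-\lambda\tilde u=(\eta f-\eta' u)+\Div(\eta g)$, so the estimate in Theorem \ref{thm5.1} ii) (with $T=\infty$), which by uniqueness applies to $\tilde u$, yields
\[
\lambda\|\tilde u\|_{L_p(\bR^{d+1})}+\sqrt\lambda\|D\tilde u\|_{L_p(\bR^{d+1})}\le N\sqrt\lambda\|\eta g\|_{L_p(\bR^{d+1})}+N\|\eta f\|_{L_p(\bR^{d+1})}+N\|\eta' u\|_{L_p(\bR^{d+1})}.
\]
Each norm factors as a product of the $L_p(\bR)$ norm of $\eta$ (or $\eta'$) and the corresponding $L_p(\bR^d)$ norm of $u$, $Du$, $g$, or $f$, so dividing by $\|\eta\|_{L_p(\bR)}$ gives
\[
\lambda\|u\|_{L_p(\bR^d)}+\sqrt\lambda\|Du\|_{L_p(\bR^d)}\le N\sqrt\lambda\|g\|_{L_p(\bR^d)}+N\|f\|_{L_p(\bR^d)}+N\,\frac{\|\eta'\|_{L_p(\bR)}}{\|\eta\|_{L_p(\bR)}}\,\|u\|_{L_p(\bR^d)}.
\]
Choosing $\eta(t)=\eta_1(t/R)$ makes $\|\eta'\|_{L_p(\bR)}/\|\eta\|_{L_p(\bR)}=O(1/R)$, so for $R$ large the last term is absorbed into the left-hand side, producing the estimate in ii). The estimate in i) then follows by writing an arbitrary $h\in\bH^{-1}_p(\bR^d)$ as $h=f+\Div g$ with $\|f\|_{L_p}+\|g\|_{L_p}\le N\|h\|_{\bH^{-1}_p}$ (for instance $f=(1-\Delta)^{-1/2}h$, $g=-\nabla(1-\Delta)^{-1/2}h$), which accounts for the factor $\sqrt\lambda+1$.

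For existence in ii) I would run the method of continuity along the family $\cL_\theta=\theta\cL+(1-\theta)\Delta$, $\theta\in[0,1]$, with the lower-order terms of $\cL$ multiplied by $\theta$. Each $\cL_\theta$ is uniformly elliptic, and its leading coefficients $\theta a^{ij}+(1-\theta)\delta^{ij}$ still obey Assumption \ref{assump7b} ($\gamma$) — the dependence on $(x^1,\dots,x^{\pi_{ij}})$ is preserved and the mean oscillation only decreases — so the a priori estimate above holds uniformly in $\theta$; at $\theta=0$ the equation $\Delta u-\lambda u=f+\Div g$ is classically solvable in $W^1_p(\bR^d)$, which provides the starting point. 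Part iii) is obtained the same way, invoking Theorem \ref{thm5.1} iii) so that $\lambda_0=0$ and the $\tilde u=\eta u$ argument is valid for every $\lambda>0$. I do not anticipate a real obstacle: the substantive work — the ``breaking the symmetry'' induction and the sharp-function/maximal-function estimates — is entirely inside Theorem \ref{thm5.1}; the only points demanding care are the absorption of $\|u\|_{L_p}$ via a large time cutoff and the (elementary) verification that the parabolic BMO hypotheses are inherited from the elliptic ones.
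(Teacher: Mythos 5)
Your proposal is correct and is essentially the argument the paper intends: the paper offers no separate proof of Theorem \ref{thm5.9} (``as before''), the standard reduction being exactly your device of viewing the time-independent coefficients as parabolic ones with the same partial oscillations, applying Theorem \ref{thm5.1} to $\tilde u(t,x)=\eta(t)u(x)$, factoring the norms, and absorbing the $\eta'$-term by taking the time scale of the cutoff large. Only two trivial slips: to write $h\in\bH^{-1}_p$ as $f+\Div g$ one should take $f=(1-\Delta)^{-1}h$, $g=-\nabla(1-\Delta)^{-1}h$ (your choice reproduces $(1-\Delta)^{1/2}h$, not $h$), and at the endpoint $\lambda=\lambda_0$ it is cleaner to invoke Theorem \ref{thm5.1} i) directly (or pass to the limit $\lambda\downarrow\lambda_0$) rather than ii) plus uniqueness, which requires $\lambda>\lambda_0$.
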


\subsection{Solvability of nondivergence form equations}    \label{subsec6.2}
We prove in this subsection the following solvability theorems, which generalize Theorems \ref{thm2} and \ref{thm3} respectively.

\begin{theorem}
                                         \label{thm5.10}

For any $p\in (1,\infty)$, there exists a $\gamma =\gamma(d,\delta,p) >0$
such that under Assumption
\ref{assump7} ($\gamma$) for any $T\in (-\infty,+\infty]$ the following holds.

i) For any $u\in W^{1,2}_p(\bR^{d+1}_T)$,
$$
\lambda\|u\|_{L_{p}(\bR^{d+1}_T)}+\sqrt{\lambda}
\|Du\|_{L_{p}(\bR^{d+1}_T)}+\|D^{2}u\|_{L_{p}(\bR^{d+1}_T)}
+\|u_t\|_{L_{p}(\bR^{d+1}_T)}
$$
\begin{equation}
                                                \label{13.10.21b}
\leq N\|Pu-\lambda u\|_{L_{p}(\bR^{d+1}_T)},
\end{equation}
provided that $\lambda\geq \lambda_0$, where $\lambda_0
\geq0$ depends only
 on $d,\delta,p,K$ and $R_0$, and $N$ depends only on $d,\delta$ and $p$.

ii) For any $\lambda> \lambda_0$ and $f\in L_p(\bR^{d+1}_T)$, there exists a unique solution
$u\in W^{1,2}_p(\bR^{d+1}_T)$ of equation \eqref{parabolic} in
$\bR^{d+1}_T$.

iii) In the case that
$a^{11}=a^{11}(t)$, and
$$
a^{ij}=a^{ij}(t,x^1,\ldots,x^{\pi_{ij}}),\,\,
ij>1,\quad
b^j\equiv c\equiv 0,
$$  we can take
$\lambda_0=0$ in i) and ii).
\end{theorem}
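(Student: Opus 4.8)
The plan is to reproduce the proof of Theorem~\ref{thm2} in Section~\ref{sec3}, with every appeal to a ``depth-one'' ingredient (Theorem~\ref{thm2.2}, Corollary~\ref{cor2.3}, and the appendix propositions) replaced by its hierarchical counterpart built on the divergence-form result Theorem~\ref{thm5.1}, and with the whole argument organized as an induction on the depth of the hierarchy: I prove Theorem~\ref{thm5.10} together with the family of statements obtained by replacing $\pi_{ij}$ by $\min(\pi_{ij},k)$, by induction on $k$, the base case $k=1$ being Theorem~\ref{thm2} and the target being $k=d-1$. Assuming the result at depth $k-1$, the first step is the analogue of Theorem~\ref{thm2.2}: the a priori estimate~\eqref{13.10.21b} with $\lambda_0=0$ and solvability for the model operator $P_0u=-u_t+a^{ij}D_{ij}u$, $a^{11}=a^{11}(t)$, $a^{ij}=a^{ij}(t,x^1,\ldots,x^{\min(\pi_{ij},k)})$ for $ij>1$. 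Since $\min(\pi_{ij},k)<\max(i,j)$, each $a^{ij}$ is independent of $x^{\max(i,j)}$, so $P_0$ can be put in divergence form $-u_t+D_l(\tilde a^{il}D_iu)$ with $\tilde a^{il}$ (folded into the upper triangle) uniformly elliptic and hierarchically structured as in Theorem~\ref{thm5.1}~iii); the latter gives the $\cH^1_p$-estimate and, since all $\tilde a^{il}$ are independent of $x^{k+1},\ldots,x^d$, also $\|D_{xx^m}u\|_{L_p}\le N\|P_0u-\lambda u\|_{L_p}$ for $m>k$ (differentiate in $x^m$ and reapply Theorem~\ref{thm5.1}~iii)). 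Replacing the block $\{\max(i,j)>k\}$ by the partial Laplacian $\sum_{m>k}D_m^2$ produces a uniformly elliptic \emph{model} operator of depth $\le k-1$, to which the depth-$(k-1)$ version of part~iii) applies; since the corresponding error only involves the already-controlled $D_{xx^m}u$, $m>k$, this yields the full $W^{1,2}_p$-estimate, and the equation gives $u_t$. The finite-$T$ and initial-value versions follow as in Corollary~\ref{cor2.3}.

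Next I copy Lemma~\ref{lemma 9.10.1}: if $P_0u=0$ in $Q_{\kappa r}$, then because the model coefficients do not depend on $x^{k+1},\ldots,x^d$, every derivative $D^{\gamma''}u$ with $\gamma''$ supported in those directions still solves $P_0(D^{\gamma''}u)=0$, so the Krylov--Safonov estimate gives $\dashint_{Q_r}|D^{\gamma''}u-(D^{\gamma''}u)_{Q_r}|^p\le N\kappa^{-p\alpha}(|D^{\gamma''}u|^p)_{Q_{\kappa r}}$ for $|\gamma''|\le2$. Then, exactly as in Theorems~\ref{thm12.02}--\ref{thm4.1}, I freeze the coefficients inside a cube of size $\le R_0$ to their averages $\bar a^{ij}$ over the BMO directions (these are model coefficients), apply the first two steps to the frozen operator, and bound the commutator by H\"older's inequality and Assumption~\ref{assump7}($\gamma$), obtaining for $b^i=c=0$ and $u$ supported in $Q_{R_0}$ a mean-oscillation estimate
\begin{multline*}
\left(|D^2_{(x^{k+1},\ldots,x^d)}u-(D^2_{(x^{k+1},\ldots,x^d)}u)_{Q_r(t_0,x_0)}|^p\right)_{Q_r(t_0,x_0)}
\le N\kappa^{d+2}\left(|Pu|^p\right)_{Q_{\kappa r}(t_0,x_0)}\\
+N\kappa^{d+2}\gamma^{1/\sigma}\left(|D^2u|^{p\tau}\right)_{Q_{\kappa r}(t_0,x_0)}^{1/\tau}
+N\kappa^{-p\alpha}\left(|D^2_{(x^{k+1},\ldots,x^d)}u|^p\right)_{Q_{\kappa r}(t_0,x_0)},
\end{multline*}
where $D^2_{(x^{k+1},\ldots,x^d)}u$ denotes the second derivatives in $x^{k+1},\ldots,x^d$ only; at the depth of Theorem~\ref{thm5.10} this ``good portion'' is just $D_d^2u$.

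The key new point is the analogue of Lemma~\ref{lem2.1}: $\|D^2u\|_{L_p}+\|u_t\|_{L_p}\le N\|Pu\|_{L_p}+N\|D^2_{(x^{k+1},\ldots,x^d)}u\|_{L_p}$ for $b^i=c=0$ and $u$ supported in a small enough cube. One cannot simply freeze $x^d$ here, since the coefficients are merely partially BMO in it; instead one isolates the block $\{\max(i,j)>k\}$, replaces it by $\sum_{m>k}D_m^2$ to form a uniformly elliptic operator $\mathfrak P$ that still satisfies Assumption~\ref{assump7}($\gamma$) but whose coefficients depend on no variable beyond $x^{k-1}$ --- hence of depth $\le k-1$, covered by the inductive hypothesis --- and performs a parabolic scaling in $(t,x^1,\ldots,x^k)$ as in Corollary~\ref{cor5.5}, so that in $\mathfrak Pu-\lambda u=Pu-\lambda u-E$ the error $E$ consists only of second derivatives that either lie in the good portion or are ``mixed'' (one index $\le k$, one $>k$), the latter carrying a small power of the scaling parameter after rescaling and hence absorbable. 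Combining this with the mean-oscillation estimate exactly as in Lemma~\ref{lem4.2} (choose $q<p$ with $\tau q<p$, apply the Fefferman--Stein and Hardy--Littlewood theorems, then take $\kappa$ large and $\gamma$ small and absorb) gives $\|D^2u\|_{L_p}+\|u_t\|_{L_p}\le N\|Pu\|_{L_p}$ for $u$ supported in a small cube; a partition of unity and an idea of Agmon upgrade this to~\eqref{13.10.21b}, the method of continuity gives part~ii), and part~iii) is the first step. For $T<\infty$ one uses the extension trick from the proof of Theorem~\ref{thm2.2}, and the variant with Assumption~\ref{assump7b} (i.e.\ $a^{11}=a^{11}(x^1)$, generalizing Theorem~\ref{thm3}) is identical with Theorem~\ref{thm3.2} and the relevant parts of Theorem~\ref{thm5.1} in place of their $a^{11}=a^{11}(t)$ analogues.

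The step I expect to be the main obstacle is this reduction lemma. In Section~\ref{sec3} \emph{all} the intermediate second derivatives were absorbed in one stroke by Krylov's mixed-VMO estimate for $-u_t+a^{11}D_1^2u+\Delta_{d-1}u$; in the hierarchical setting the model operator only permits commuting derivatives in the top directions, so Krylov--Safonov controls nothing more than $D^2_{(x^{k+1},\ldots,x^d)}u$, and the rest of $D^2u$ must be recovered by the ``breaking the symmetry'' device --- adding a partial Laplacian to lower the depth, together with a scaling to render the resulting error absorbable. This is precisely why the proof has to be set up as an induction on the depth rather than a single pass, and keeping the scaling, the support restrictions, and the choice of $\gamma$ mutually consistent across the induction is the principal technical burden; no genuinely new analytic input beyond Theorem~\ref{thm5.1} and the Krylov--Safonov estimate is required.
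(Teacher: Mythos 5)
Your proposal follows essentially the same route as the paper's Section \ref{subsec6.2}: induction on the depth of the hierarchy, solvability for the model operator obtained by rewriting it in divergence form and invoking Theorem \ref{thm5.1} iii), a mean-oscillation estimate for the second derivatives in the top directions (the analogue of Theorem \ref{thm5.12}), a reduction lemma bounding $\|u_t\|_{L_p}+\|D^2u\|_{L_p}$ by $\|Pu\|_{L_p}$ plus the good portion, proved by replacing the deep block with a partial Laplacian (the analogue of Lemma \ref{lem5.13}), and then Fefferman--Stein, Hardy--Littlewood, Agmon's idea and the method of continuity. Your only deviations are harmless: in the model step the paper recovers the intermediate derivatives $D_{xx^{d-1}}u,\ldots$ by iterating entirely within the divergence-form framework (the modified operators $\bar P_1,\ldots$ in the proof of Theorem \ref{thm5.11}) instead of appealing to the nondivergence inductive hypothesis, and in the reduction lemma the paper dispenses with your anisotropic rescaling, absorbing the mixed second derivatives directly via an inequality of the type \eqref{3.57} after applying the lower-depth results (Theorem \ref{thm5.2} and Lemma \ref{lem5.2}).
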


\begin{theorem}
                                         \label{thm5.10b}

For any $p\in (1,\infty)$, there exists a $\gamma =\gamma(d,\delta,p) >0$
such that under Assumption \ref{assump7b} ($\gamma$) for any $T\in (-\infty,+\infty]$ the following holds.

i) For any $u\in W^{1,2}_p(\bR^{d+1}_T)$, we have \eqref{13.10.21b}
provided that $\lambda\geq \lambda_0$, where $\lambda_0
\geq0$ depends only
 on $d,\delta,p,K$ and $R_0$, and $N$ depends only on $d,\delta$ and $p$.

ii) For any $\lambda> \lambda_0$ and $f\in L_p(\bR^{d+1}_T)$, there exists a unique solution
$u\in W^{1,2}_p(\bR^{d+1}_T)$ of equation \eqref{parabolic} in
$\bR^{d+1}_T$.

iii) In the case that
$a^{11}=a^{11}(x^1)$, and
$$
a^{ij}=a^{ij}(t,x^1,\ldots,x^{\pi_{ij}}),\,\,
ij>1,\quad
b^j\equiv c\equiv 0,
$$  we can take
$\lambda_0=0$ in i) and ii).
\end{theorem}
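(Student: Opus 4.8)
The plan is to rerun the scheme of Sections \ref{sec3} and \ref{sec4}, with Theorem \ref{thm5.1}(iii) in the role played there by the divergence-form results of the Appendix, and to carry the hierarchical depth along by an induction, exactly as in Subsection \ref{subsec6.1}. More precisely, I would induct on the number $k$ of ``active'' coordinates, treating in the $k$-th step the case where $\pi_{ij}$ is replaced by $\min(\pi_{ij},k)$; the base case $k=1$ is Theorem \ref{thm3}. It suffices to treat $b^j\equiv c\equiv 0$: the full statement then follows by a partition of unity and Agmon's device, solvability follows from the a priori estimate by the method of continuity, and the reduction from arbitrary $T$ to $T=\infty$ is as in Theorem \ref{thm2.2}. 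So fix $k$ and assume the theorem, and the lemmas below, in every hierarchical depth $<k$.

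The first step is the analogue of Theorem \ref{thm3.2}: the $W^{1,2}_p$ a priori estimate and solvability for $P_0u=-u_t+a^{ij}D_{ij}u$ with $a^{11}=a^{11}(x^1)$ and $a^{ij}=a^{ij}(t,x^1,\dots,x^{\pi_{ij}})$ for $ij>1$. As in Theorem \ref{thm3.2}, the change of variables $y^1=\int_0^{x^1}(a^{11}(s))^{-1}\,ds$, $y^j=x^j$ turns $P_0$ into a divergence-form operator $\bar P_0$; here the relevant combinatorial fact is that for each $i\neq j$ both $a^{ij}$ and $a^{ji}$ are independent of $x^{\max(i,j)}$ (since $\pi_{ij}=\pi_{ji}=\max(i,j)-1$), so the second-order part regroups as $\sum_{i<j}D_{y^j}\bigl((\bar a^{ij}+\bar a^{ji})(\bar a^{11})^{-1}D_{y^i}v\bigr)$ plus diagonal terms, which puts $\bar P_0$ into the class of Theorem \ref{thm5.1}(iii). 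Applying that theorem to $v$ and to $D_1u$ (which solves a divergence-form equation with leading coefficient $\mathrm{diag}(a^{11}(x^1),1,\dots,1)$, again covered by Theorem \ref{thm5.1}(iii)) controls $\lambda u$, $\sqrt\lambda\,Du$, $D_1^2u$ and $u_t$; since $\bar P_0$ commutes with $D_{y^m}$ for every $m>k$, differentiating $v$ in those directions controls all $D_{ij}u$ with $\max(i,j)>k$, and the remaining second derivatives are governed by $\sum_{i,j\le k}a^{ij}D_{ij}$ acting in the first $k$ spatial variables, which is simple hierarchical of depth $k-1$, so the inductive hypothesis (applied slice-wise in $x^{k+1},\dots,x^d$) finishes the estimate. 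This yields \eqref{eq10.30b} for $P_0$ and hence the initial-value counterpart of Corollary \ref{cor2.3}.

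The two estimates behind the sharp-function bound come next. Since $P_0$ above has all coefficients independent of $x^{k+1},\dots,x^d$, for any multi-index $\gamma'$ in those coordinates $P_0D^{\gamma'}u=0$ whenever $P_0u=0$, so Lemma \ref{lemma 9.10.1} applies to $D^{\gamma'}u$, and the coefficient-freezing argument of Theorems \ref{thm12.02} and \ref{thm4.1} gives, for $u$ vanishing outside $Q_{R_0}$,
\begin{multline*}
\bigl(|D^2_{(x^{k+1},\dots,x^d)}u-(D^2_{(x^{k+1},\dots,x^d)}u)_{Q_r(t_0,x_0)}|^p\bigr)_{Q_r(t_0,x_0)}
\le N\kappa^{d+2}\bigl(|Pu|^p\bigr)_{Q_{\kappa r}(t_0,x_0)}\\
+N\kappa^{d+2}\gamma^{1/\sigma}\bigl(|D^2u|^{p\tau}\bigr)_{Q_{\kappa r}(t_0,x_0)}^{1/\tau}
+N\kappa^{-p\alpha}\bigl(|D^2_{(x^{k+1},\dots,x^d)}u|^p\bigr)_{Q_{\kappa r}(t_0,x_0)}.
\end{multline*}
The second is the ``filling-the-hole'' lemma: for $u\in C_0^\infty$ supported in a sufficiently small cube and $b^j\equiv c\equiv 0$,
\[
\|D^2u\|_{L_p}+\|u_t\|_{L_p}\le N\|Pu\|_{L_p}+N\|D^2_{(x^{k+1},\dots,x^d)}u\|_{L_p}.
\]
To prove it I would scale $t$ and the first $k$ spatial variables as in the proof of Corollary \ref{cor5.5}: this attaches a small factor to the coupling terms between $x^1,\dots,x^k$ and $x^{k+1},\dots,x^d$, and after putting the Laplacian in $x^{k+1},\dots,x^d$ back on the left one is left with the depth-$(k-1)$ operator $-\partial_t+\sum_{i,j\le k}a^{ij}D_{ij}+\Delta_{(x^{k+1},\dots,x^d)}$, to which the inductive hypothesis applies; the mixed second derivatives $D_{ij}u$ with $\min(i,j)\le k<\max(i,j)$ are absorbed using the small factor and a Calder\'on--Zygmund interpolation. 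Finally, feeding the oscillation estimate into the Fefferman--Stein and Hardy--Littlewood theorems (with $q\in(1,p)$ and $\tau$ so that $p>\tau q$) and combining with the filling-the-hole lemma gives $\|D^2u\|_{L_p}+\|u_t\|_{L_p}\le N\|Pu\|_{L_p}$ for $u$ with small support on choosing $\kappa$ large and then $\gamma$ small; a partition of unity, Agmon's device and the method of continuity then yield \eqref{13.10.21b} and the solvability.

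I expect the filling-the-hole lemma to be the main obstacle. In Section \ref{sec4} the frozen operator commuted with all of $D_2,\dots,D_d$, so the Krylov--Safonov estimate already controlled the whole block $D^2_{x'}u$ and a single differentiation in $x^1$ closed the argument; in the hierarchical case it only commutes with $D_{k+1},\dots,D_d$, so only $D^2_{(x^{k+1},\dots,x^d)}u$ is handed back by the oscillation estimate, and the ``middle'' second derivatives have to be recovered through the scaling-and-induction mechanism. Arranging that the errors in that reduction are genuinely small --- which needs both the scaling factor and the inductive hypothesis in strictly smaller hierarchical depth --- is the delicate point, and is exactly why this section, like Subsection \ref{subsec6.1}, is organized around an induction rather than a direct reduction.
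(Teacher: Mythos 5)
Your proposal is correct in outline and its architecture is the same as the paper's: the paper also proceeds by (a) solvability for the ``simple'' operator with $a^{11}=a^{11}(x^1)$, $a^{ij}=a^{ij}(t,x^1,\ldots,x^{\pi_{ij}})$ via the change of variables $y^1=\phi(x^1)$ and Theorem \ref{thm5.1} iii) (Theorem \ref{thm5.11}); (b) a mean oscillation estimate for the last block of second derivatives (Theorem \ref{thm5.12}); (c) a filling lemma $\|D^2u\|_{L_p}+\|u_t\|_{L_p}\le N\|Pu\|_{L_p}+N\sum_{i\ge 3}\|D_i^2u\|_{L_p}$ for compactly supported $u$ (Lemma \ref{lem5.13}); and (d) Fefferman--Stein, Hardy--Littlewood, partition of unity, Agmon's device and the method of continuity, with the general hierarchy handled by induction (the paper writes out only the depth-two case $\tilde\pi_{ij}$). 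You deviate in two sub-steps, both of which look viable. In (a), after controlling $D_{ij}u$ with $\max(i,j)>k$ by differentiating the divergence-form equation, you recover the middle block $D_{ij}u$, $i,j\le k$, by a slice-wise application of the lower-dimensional nondivergence inductive hypothesis; the paper instead stays entirely within the divergence framework and runs a downward recursion over coordinates: having bounded $D_{yy^d}v$, it moves the $D_d$-terms to the right-hand side, adds $D_d^2v$ to both sides, differentiates in $y^{d-1}$, applies Theorem \ref{thm5.1} iii) to the modified operator $\bar P_1$, and repeats $d-1$ times, estimating $D_1^2u$ via $w=D_1u$ only at the end. (On that point your ordering is loose: applying Theorem \ref{thm5.1} iii) to $v$ and to $D_1u$ does not by itself control $D_1^2u$, since the equation for $D_1u$ carries $\sum_{ij>1}\|D_{ij}u\|_{L_p}$ on the right, so this bound must come after the other second derivatives.) In (c), you create smallness in the coupling terms by the anisotropic scaling of Corollary \ref{cor5.5}; the paper simply replaces $a^{ij}$, $\max(i,j)>2$, by $\delta_{ij}$, invokes the depth-one localized estimates (Theorem \ref{thm5.2} and Lemma \ref{lem5.2} --- i.e.\ the counterpart of Lemma \ref{lem4.2} rather than the full theorem, which is the correct inductive object since $\lambda_0$ may be positive for non-simple coefficients, a point you anticipate by carrying ``the lemmas below'' in the induction) and absorbs the mixed derivatives by an interpolation as in \eqref{3.57}. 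For your variants to close, the induction must be quantified over all dimensions simultaneously (so the slice-wise use in dimension $k$ is covered), and the effect of the scaling on the BMO seminorm and on $R_0$ must be tracked as in Corollary \ref{cor5.5}. One small slip: in your rewriting of $\bar P_0$ the factor $(\bar a^{11})^{-1}$ belongs only to the terms containing $D_{y^1}v$; for $2\le i<j$ the correct term is $D_{y^j}\bigl((\bar a^{ij}+\bar a^{ji})D_{y^i}v\bigr)$ with no such factor.
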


As a consequence of Theorem \ref{thm5.10b}, we have the following solvability theorem for elliptic equations.

\begin{theorem}
                                         \label{thm5.10c}

For any $p\in (1,\infty)$, there exists a $\gamma =\gamma(d,\delta,p) >0$ such that under Assumption \ref{assump7b} ($\gamma$) the following holds.

i) For any $u\in W^{2}_p(\bR^{d})$, we have
\begin{equation*}
\lambda\|u\|_{L_{p}(\bR^{d})}+\sqrt{\lambda}
\|Du\|_{L_{p}(\bR^{d})}+\|D^{2}u\|_{L_{p}(\bR^{d})}
\leq N\|Lu-\lambda u\|_{L_{p}(\bR^{d})},
\end{equation*}
provided that $\lambda\geq \lambda_0$, where $\lambda_0
\geq0$ depends only
 on $d,\delta,p,K$ and $R_0$, and $N$ depends only on $d,\delta$ and $p$.

ii) For any $\lambda> \lambda_0$ and $f\in L_p(\bR^{d})$, there exists a unique solution
$u\in W^{2}_p(\bR^{d})$ of equation \eqref{elliptic} in
$\bR^{d}$.

iii) In the case that
$a^{11}=a^{11}(x^1)$, and
$$
a^{ij}=a^{ij}(x^1,\ldots,x^{\pi_{ij}}),\,\,
ij>1,\quad
b^j\equiv c\equiv 0,
$$  we can take
$\lambda_0=0$ in i) and ii).
\end{theorem}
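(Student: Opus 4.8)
The plan is to deduce Theorem \ref{thm5.10c} from the parabolic Theorem \ref{thm5.10b} by regarding a time-independent function on $\bR^d$ as a function on $\bR^{d+1}$ that does not depend on $t$. Since $a^{ij}$, $b^i$, $c$ are time-independent, viewed as functions on $\bR^{d+1}$ they satisfy the hypotheses of Theorem \ref{thm5.10b} with the same $\gamma$ and $R_0$, and on a $t$-independent $u$ the parabolic operator $P=-\partial_t+L$ acts as $Pu=Lu$.

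To prove i), fix $u\in W^2_p(\bR^d)$ with $Lu-\lambda u=f$ and $\lambda\ge\lambda_0$, where $\lambda_0$ is the constant provided by Theorem \ref{thm5.10b}. Choose $\zeta\in C_0^\infty(\bR)$ with $\zeta\not\equiv 0$, put $\zeta_n(t)=\zeta(t/n)$ and $w_n(t,x)=\zeta_n(t)u(x)$; then $w_n\in W^{1,2}_p(\bR^{d+1})$ and a direct computation gives $Pw_n-\lambda w_n=\zeta_n f-\zeta_n'u$. Applying \eqref{13.10.21b} with $T=+\infty$ to $w_n$, discarding the $\|(w_n)_t\|$ term on the left, using $\|D^\gamma w_n\|_{L_p(\bR^{d+1})}=\|\zeta_n\|_{L_p(\bR)}\|D^\gamma u\|_{L_p(\bR^d)}$ for spatial multi-indices $\gamma$ with $|\gamma|\le 2$, and dividing by $\|\zeta_n\|_{L_p(\bR)}$, I obtain
\[
\lambda\|u\|_{L_p(\bR^d)}+\sqrt\lambda\|Du\|_{L_p(\bR^d)}+\|D^2u\|_{L_p(\bR^d)}
\le N\|f\|_{L_p(\bR^d)}+N\,\frac{\|\zeta_n'\|_{L_p(\bR)}}{\|\zeta_n\|_{L_p(\bR)}}\,\|u\|_{L_p(\bR^d)}.
\]
Since $\|\zeta_n'\|_{L_p(\bR)}/\|\zeta_n\|_{L_p(\bR)}=n^{-1}\|\zeta'\|_{L_p(\bR)}/\|\zeta\|_{L_p(\bR)}\to 0$ as $n\to\infty$, passing to the limit yields i).

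For ii), uniqueness is immediate from i): the difference $v$ of two solutions satisfies $Lv-\lambda v=0$, and since $\lambda>\lambda_0\ge 0$, i) forces $v=0$. For existence I would use the method of continuity along $L_s u=(1-s)\delta\Delta u+sLu$, $s\in[0,1]$: each $L_s$ has time-independent coefficients, its principal part satisfies $\delta|\xi|^2\le a_s^{jk}\xi^j\xi^k\le\delta^{-1}|\xi|^2$ and $|a_s^{jk}|\le\delta$, and, because adding a constant matrix does not change mean oscillations, $L_s$ still satisfies Assumption \ref{assump7b}($\gamma$); hence the estimate of i) holds for $L_s-\lambda$ with $N$ and $\lambda_0$ independent of $s$. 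Since $\delta\Delta-\lambda\colon W^2_p(\bR^d)\to L_p(\bR^d)$ is an isomorphism for $\lambda>0$, the method of continuity gives surjectivity of $L-\lambda$, which together with uniqueness proves ii). Part iii) is obtained the same way, now invoking part iii) of Theorem \ref{thm5.10b} (which yields $\lambda_0=0$ for the corresponding simple coefficients) and observing that the path $L_s$ remains within the class of iii), since adding the constant matrix $(1-s)\delta I$ preserves both the dependence of each $a^{ij}$ on $(x^1,\dots,x^{\pi_{ij}})$ and the vanishing of $b^j$ and $c$, while $\delta\Delta-\lambda$ is invertible for every $\lambda>0=\lambda_0$. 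The argument is routine given Theorem \ref{thm5.10b}; the only points needing care are the limiting procedure in i) — one cannot apply the parabolic estimate directly to the non-integrable $t$-independent function, and the time cutoff together with its scaling is what makes the error term vanish in the limit — and the uniformity of the hypotheses along the continuity path.
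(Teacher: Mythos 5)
Your proposal is correct and follows exactly the route the paper intends: Theorem \ref{thm5.10c} is stated there simply as a consequence of the parabolic Theorem \ref{thm5.10b}, and the standard way to realize this is precisely your argument --- apply the parabolic estimate to $\zeta(t/n)u(x)$ and let $n\to\infty$ to get the a priori bound, then obtain solvability by the method of continuity from $\delta\Delta$, noting that the ellipticity bounds, the smallness of the mean oscillations, and hence $N$ and $\lambda_0$ are uniform along the path. No gaps.
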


We will only prove Theorem \ref{thm5.10b}. The proof of Theorem \ref{thm5.10} is similar and actually simpler. We make a few preparations before the proof.

Denote
$$
P_0u=-u_t+a^{ij}D_{ij}u,
$$
where $a^{11}=a^{11}(x^1)$ and $a^{ij}=a^{ij}(t,x^1,\ldots,x^{\pi_{ij}})$ for $ij>1$.
We first establish a solvability result for equations with these simple leading coefficients.
\begin{theorem}
						\label{thm5.11}
Let $p\in (1,\infty)$ and $T\in (-\infty,\infty]$. Then for any $u\in W^{1,2}_p(\bR^{d+1}_T)$ and $\lambda\ge 0$, we have
$$
\lambda\|u\|_{L_{p}(\bR^{d+1}_T)}+\sqrt{\lambda}
\|Du\|_{L_{p}(\bR^{d+1}_T)}+\|D^2u\|_{L_{p}(\bR^{d+1}_T)}
+\|u_{t}\|_{L_{p}(\bR^{d+1}_T)}
$$
\begin{equation}
                \label{eq10.30c}
\le N\|P_0u-\lambda u\|_{L_{p}(\bR^{d+1}_T)},
\end{equation}
where $N=N(d,p,\delta)>0$.
Moreover, for any $f\in L_p(\bR^{d+1}_T)$ and $\lambda>0$ there is a unique $u\in W^{1,2}_p(\bR^{d+1}_T)$ solving
$$
P_0 u-\lambda  u=f \quad\text{in}\,\,\, \bR^{d+1}_T.
$$
\end{theorem}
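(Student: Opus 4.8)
The plan is to follow the proof of Theorem \ref{thm3.2}, with Theorem \ref{thm5.1} playing the role of the Appendix results, and to supply the one genuinely new ingredient — control of the ``intermediate'' second derivatives — by an induction on the number $d$ of spatial variables. As in Theorem \ref{thm3.2}, it suffices to prove the a priori estimate \eqref{eq10.30c} for $\lambda>0$, $T=\infty$ and $u\in C_0^\infty$: the case $\lambda=0$ follows by applying the estimate to $P_0u-\lambda u=f-\lambda u$ and letting $\lambda\to0^+$ (using $u\in L_p$); the case $T<\infty$ follows by comparing $u$ with the solution $w\in W^{1,2}_p(\bR^{d+1})$ of $P_0w-\lambda w=\chi_{t<T}(P_0u-\lambda u)$; and the existence for $\lambda>0$ follows from \eqref{eq10.30c} by the method of continuity, joining $P_0-\lambda$ to $-\partial_t+\Delta-\lambda$ through $P_0^{(s)}u=-u_t+((1-s)\delta^{ij}+sa^{ij})D_{ij}u$, $s\in[0,1]$, all of whose members have leading coefficients of the same structure and a common ellipticity constant.

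Fix $d$ and assume the theorem holds in $d-1$ spatial variables. After the bi-Lipschitz change of variables $y^1=\phi(x^1):=\int_0^{x^1}(a^{11}(s))^{-1}\,ds$, $y^j=x^j$ for $j\ge2$, put $v(t,y)=u(t,\phi^{-1}(y^1),y')$, $\bar a^{11}(y^1)=a^{11}(\phi^{-1}(y^1))$, $\bar a^{ij}(t,y^1,\dots,y^{\pi_{ij}})=a^{ij}(t,\phi^{-1}(y^1),y^2,\dots,y^{\pi_{ij}})$ for $ij>1$, and $\bar f$ accordingly. Since $\pi_{ij}=\max(i,j)-1$, for each $(i,j)\ne(1,1)$ the sum $\bar a^{ij}D_{y^iy^j}v+\bar a^{ji}D_{y^jy^i}v$ can be written with the outer derivative on the index $\max(i,j)$, on which $\bar a^{ij}+\bar a^{ji}$ does not depend, while $a^{11}D_1^2u=D_{y^1}\big((\bar a^{11})^{-1}D_{y^1}v\big)$; hence $P_0u-\lambda u=f$ transforms into a divergence form equation $\bar P_0v-\lambda v=\bar f$ whose operator $\bar P_0$ is uniformly elliptic with constant $N(\delta)$ and whose leading coefficients depend hierarchically, the one in the $(i,j)$-slot on $(t,y^1,\dots,y^{\max(i,j)-1})$ and the $(1,1)$-one being the function $(\bar a^{11})^{-1}$ of $y^1$ alone. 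Thus Theorem \ref{thm5.1} iii) (its ``$a^{11}=a^{11}(x^1)$'' case) applies and yields $\lambda\|v\|_{L_p}+\sqrt\lambda\|Dv\|_{L_p}\le N\|\bar f\|_{L_p}$; undoing the $\delta$-controlled change of variables gives $\lambda\|u\|_{L_p}+\sqrt\lambda\|Du\|_{L_p}\le N\|f\|_{L_p}$. Furthermore every coefficient of $\bar P_0$ is independent of $y^d$, so $D_dv$ solves $\bar P_0(D_dv)-\lambda D_dv=D_d\bar f=\Div(\bar f\,e_d)$; Theorem \ref{thm5.1} iii) applied to this equation gives $\sqrt\lambda\|D(D_dv)\|_{L_p}\le N\sqrt\lambda\|\bar f\|_{L_p}$, so that $\|D_{x^ix^d}u\|_{L_p}\le N\|f\|_{L_p}$ for $i=1,\dots,d$.

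It remains to bound $D_{ij}u$ for $i,j\le d-1$ and $u_t$, and here the induction enters. Write $\tilde P_0u=-u_t+\sum_{i,j\le d-1}a^{ij}D_{ij}u$ and $f_1=f-\sum_{i\le d-1}(a^{id}+a^{di})D_{id}u-a^{dd}D_{dd}u$, so that $\tilde P_0u-\lambda u=f_1$ with $\|f_1\|_{L_p}\le N\|f\|_{L_p}$ by the previous paragraph. For $i,j\le d-1$ one has $\pi_{ij}\le d-2$, so the coefficients of $\tilde P_0$ do not depend on $x^d$; hence for each fixed $x^d$ the slice $u(\cdot,x^d)$, seen as a function of $(t,x^1,\dots,x^{d-1})$, is again in $C_0^\infty$ and solves a $(d-1)$-spatial-dimensional equation of exactly the form treated by the theorem in $d-1$ variables, with the same $\delta$ and $\lambda$. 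Applying the induction hypothesis to each slice — whose constant is independent of $x^d$ and of $\lambda$ — and integrating the $p$-th powers in $x^d$ gives $\sum_{i,j\le d-1}\|D_{ij}u\|_{L_p}+\|u_t\|_{L_p}\le N\|f_1\|_{L_p}\le N\|f\|_{L_p}$. Combined with the bounds of the previous paragraph this establishes \eqref{eq10.30c}. For the base case $d=1$ no splitting is needed: after the change of variables one returns to the $x$-coordinates, notes that $w:=D_1u$ solves $-w_t+D_1(a^{11}D_1w)-\lambda w=\Div(f\,e_1)$ — to which Theorem \ref{thm5.1} iii) applies, giving $\|D_1^2u\|_{L_p}\le N\|f\|_{L_p}$ — and then reads off $u_t$ from $u_t=a^{11}D_1^2u-\lambda u-f$.

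The main obstacle is precisely the estimation of the intermediate second derivatives $D_{ij}u$ with $1<\max(i,j)<d$. In Theorem \ref{thm3.2} these were handled by differentiating the transformed equation in $y^2,\dots,y^d$, which is legitimate there because the leading coefficients depend only on $(t,y^1)$; in the present hierarchical setting the coefficients of $\bar P_0$ genuinely depend on $y^2,\dots,y^{d-1}$, so differentiating in those directions produces commutator terms containing distributional derivatives of merely measurable coefficients, which cannot be controlled. The dimensional induction above — peeling off the last variable, on which no coefficient depends, and descending — is the nondivergence-form analogue of the induction on the number of coordinates used to prove Theorem \ref{thm5.1}, and it is the reason the argument requires more than a verbatim transcription of Theorem \ref{thm3.2}.
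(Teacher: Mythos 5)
Your argument is correct, and while the first half coincides with the paper's proof (the change of variables $y^1=\phi(x^1)$, the rewriting of $P_0$ as the divergence-form operator $\bar P_0$ with hierarchically structured coefficients, the application of Theorem \ref{thm5.1} iii) to get $\lambda\|u\|_{L_p}+\sqrt\lambda\|Du\|_{L_p}\le N\|f\|_{L_p}$, and the differentiation in $y^d$ to bound $D_{xx^d}u$), you handle the intermediate second derivatives by a genuinely different device. The paper stays in the fixed dimension: it moves the $D_d$-terms to the right-hand side, adds $D_d^2v$ to both sides to keep a nondegenerate operator $\bar P_1$, differentiates in $y^{d-1}$, applies Theorem \ref{thm5.1} iii) to the resulting divergence-form equation (whose right-hand side is $D_{d-1}$ of quantities already controlled by the $D_{xx^d}u$ bound), and repeats this peeling $d-1$ times, finishing with the $w=D_1u$ divergence-form trick for $D_1^2u$ and the equation itself for $u_t$. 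You instead run an induction on the number of spatial variables: after bounding $D_{x x^d}u$, you move all terms involving $D_d$ to the right-hand side, use that the truncated coefficients $a^{ij}$, $i,j\le d-1$, are independent of $x^d$, apply the $(d-1)$-dimensional nondivergence theorem slice-wise in $x^d$, and integrate; the $w=D_1u$ trick then appears only in the base case $d=1$. Both routes rest entirely on Theorem \ref{thm5.1} iii), and both exploit the same structural fact that no coefficient depends on the last variable; the paper's version keeps every application at the level of divergence-form estimates and avoids invoking the theorem being proved, while yours trades the explicit bookkeeping of the modified operators $\bar P_1,\bar P_2,\ldots$ for a cleaner recursive statement, and in fact mirrors the induction on coordinates that the paper itself invokes (but does not write out) for Theorem \ref{thm5.1}. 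Your reductions to $\lambda>0$, $T=\infty$, $u\in C_0^\infty$, the limiting argument for $\lambda=0$, and the ellipticity of the truncated matrix on slices are all in order, so I see no gap.
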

\begin{proof}
We follow closely the proof of Theorem \ref{thm3.2}, and recall that we only need to prove \eqref{eq10.30c} for $u\in C_0^\infty$ and $T=\infty$. We make a change of variables as in the proof of Theorem \ref{thm3.2}. Denote
\begin{align*}
v(t,y^1,y')&=u(t,\phi^{-1}(y^1),y'),\\
\bar a^{11}(y^1)&=a^{11}(\phi^{-1}(y^1)),\\
\bar a^{ij}(t,y)&=a^{ij}(t,\phi^{-1}(y^1),y^2,\ldots,y^{\pi_{ij}}),\,\,ij>1,\\
\bar f(t,y)&=f(t,\phi^{-1}(y^1),y').
\end{align*}
Define a divergence form operator $\bar P_0$ in the $(t,y)$-coordinates by
$$
\bar P_0 v=-v_t+D_{1}\left(\frac 1 {\bar a^{11}} D_1\bar v\right)+\sum_{j=2}^d D_{j}\left(\frac {\bar a^{1j}+\bar a^{j1}}
{\bar a^{11}} D_1v
\right)
$$
$$
+\sum_{1<i<j} D_{j}\left((\bar a^{ij}+\bar a^{ji})D_iv\right)
+\sum_{i=2}^d D_{i}(\bar a^{ii}D_iv).
$$
Clearly, $v$ satisfies in $\bR^{d+1}$
\begin{equation}
                            \label{eq6.52}
\bar P_0 v-\lambda v=\bar f.
\end{equation}
This is a divergence form equation, which satisfies the condition of Theorem \ref{thm5.1} iii). Thus, we have
\begin{equation*}
\lambda \| v \|_{L_p}
+ \sqrt\lambda\|Dv\|_{L_p}
\le N\|\bar f\|_{L_p}.
\end{equation*}
Therefore,
\begin{equation}
					\label{eq5.41}
\lambda \|u \|_{L_p}
+ \sqrt\lambda\|Du\|_{L_p}
\le N\|f\|_{L_p}.
\end{equation}
In order to estimate the second derivatives, notice that $D_dv$ satisfies
$$
\bar P_0 (D_d v)-\lambda D_d v=D_d \bar f.
$$
Again by Theorem \ref{thm5.1} iii), we get
$$
\| D_{yy^d} v \|_{L_p}
\le N\|\bar f\|_{L_p},
$$
which implies
\begin{equation}
				\label{eq5.42}
\| D_{xx^d} u \|_{L_p}
\le N\|f\|_{L_p}.
\end{equation}
Next, we estimate $D_{xx^{d-1}} u$. Upon moving the terms involving $D_d$ to the right-hand side of \eqref{eq6.52}, adding $D_{d}^2v$ to both sides of \eqref{eq6.52}, and taking one derivative in $y_{d-1}$ on both sides, we get
$$
\bar P_1 (D_{d-1} v)-\lambda D_{d-1} v=D_{d-1} \bar f-
\sum_{i=1}^{d-1} D_{d-1}\left((\bar a^{id}+\bar a^{di})D_{id}v\right)
$$
$$
+D_{d-1}\left(\frac {\bar a^{1d}+\bar a^{d1}}
{\bar a^{11}} D_{1d}v
\right)
+D_{d-1}((1-\bar a^{dd})D_{d}^2v),
$$
where $\bar P_1$ is defined by
$$
\bar P_1 v=-v_t+D_{1}\left(\frac 1 {\bar a^{11}} D_1\bar v\right)+\sum_{j=2}^{d-1} D_{j}\left(\frac {\bar a^{1j}+\bar a^{j1}}
{\bar a^{11}} D_1v
\right)
$$
$$
+\sum_{1<i<j<d} D_{j}\left((\bar a^{ij}+\bar a^{ji})D_iv\right)
+\sum_{i=2}^{d-1} D_{i}(\bar a^{ii}D_iv)+D_{d}^2 v.
$$
By Theorem \ref{thm5.1} iii), we get
$$
\| D_{yy^{d-1}} v \|_{L_p}
\le N\|\bar f\|_{L_p}+N\| D_{yy^{d}} v \|_{L_p},
$$
which together with \eqref{eq5.42} implies
\begin{equation*}
\| D_{xx^{d-1}} u \|_{L_p}
\le N\|f\|_{L_p}.
\end{equation*}
We can repeat this procedure $d-1$ times, and then obtain
\begin{equation}
				\label{eq7.12}
\| D_{xx'} u \|_{L_p}
\le N\|f\|_{L_p}.
\end{equation}

Finally, from \eqref{eq2.32}, we see that $w:=D_1 u$ satisfies
\begin{equation*}
-w_t+D_1(a^{11}D_1w)+\Delta_{d-1}w-\lambda w=D_1 f+\sum_{ij>1}D_1
\left((\delta_{ij}-a^{ij})D_{ij}u\right).
\end{equation*}
We use Theorem \ref{thm5.1} iii) one more time to get
\begin{equation}
				\label{eq5.44}
\| D_{1}^2 u \|_{L_p}\le \| Dw \|_{L_p}\le N\| f \|_{L_p}
+N\sum_{ij>1} \| D_{ij}u \|_{L_p}.
\end{equation}
Combining \eqref{eq5.41}, \eqref{eq7.12}, \eqref{eq5.44} and \eqref{eq3.03} yields \eqref{eq10.30c}.
The theorem is proved.
\end{proof}

In the sequel, we again only consider the case $d=3$, or $d>3$ and $\pi_{ij}$ is replaced by $\tilde \pi_{ij}$ in Assumption \ref{assump7b}. Like before, the general case follows by an induction.

\begin{theorem}
                                            \label{thm5.12}
Let $d\geq 2$, $p\in (1,\infty)$, $\gamma > 0$, $\tau,\sigma \in (1,\infty)$ satisfying
$1/\tau+1/\sigma=1$. Assume $b^i=c=0$ and $u\in W^{1,2}_p$. Then under
Assumption \ref{assump7b} ($\gamma$) with $\tilde \pi_{ij}$ in place of $\pi_{ij}$,  there exist positive constants $\alpha=\alpha(d,\delta)\in (0,1)$ and $N$ depending only on
$d$, $p$, $\delta$, and $\tau$ such that, for any
$(t_0,x_0)\in \bR^{d+1}$,
$r\in (0,\infty)$, and $\kappa\ge 4$,
\begin{multline*}
\sum_{i=3}^d\left(|D_{i}^2 u(t,x)-(D_{i}^2u)_{Q_r(t_0,x_0)}|^p
\right)_{Q_r(t_0,x_0)}
\le N\kappa^{d+2}
\left(|P u|^p\right)_{Q_{\kappa r}(t_0,x_0)}\\
+N\kappa^{d+2}\gamma^{1/\sigma}
\left(|D^2 u|^{p\tau}\right)_{Q_{\kappa r}(t_0,x_0)}^{1/\tau}
+N\kappa^{-p\alpha}\sum_{i=3}^d
\left(|D_{i}^2 u|^p\right)_{Q_{\kappa r}(t_0,x_0)},
\end{multline*}
provided that $u$ vanishes outside  $Q_{R_0}$.
\end{theorem}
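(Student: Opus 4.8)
The plan is to follow the proof of Theorem \ref{thm4.1} (equivalently Theorem \ref{thm5.2}), with Theorem \ref{thm5.11} playing the role that Theorem \ref{thm2.2} played there, and with the bookkeeping adjusted so that only the coordinates $x^3,\ldots,x^d$ are treated as ``free''. Since we work in the case $d=3$, or $d>3$ with $\pi_{ij}$ replaced by $\tilde\pi_{ij}$, every leading coefficient of the simplified operator $P_0$ depends only on $(t,x^1,x^2)$, hence is independent of $x^3,\ldots,x^d$; this is the structural fact that drives everything. Set $f=Pu$ and fix $(t_0,x_0)\in\bR^{d+1}$, $\kappa\ge4$, and $r>0$; as usual it suffices to prove the estimate for $u\in C_0^\infty$.

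First I would record the homogeneous building block. If $w\in C_0^\infty$ satisfies $P_0w=0$ in $Q_{\kappa r}$, then for each $i\ge3$ the function $D_i^2w$ again satisfies $-\partial_t(D_i^2w)+a^{kl}D_{kl}(D_i^2w)=0$ in $Q_{\kappa r}$, since every $a^{kl}$ is independent of $x^i$. The Krylov--Safonov argument used to prove Lemma \ref{lemma 9.10.1} therefore applies verbatim to $D_i^2w$ and gives
$$
\dashint_{Q_r}\big|D_i^2w-(D_i^2w)_{Q_r}\big|^p\,dx\,dt\le N\kappa^{-p\alpha}\big(|D_i^2w|^p\big)_{Q_{\kappa r}},
$$
with $\alpha=\alpha(d,\delta)\in(0,1)$. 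Combining this with the solvability estimate of Theorem \ref{thm5.11} exactly as in the proof of Theorem \ref{thm12.02} --- split $u=u_1+u_2$ where $P_0u_1=(P_0u)\chi_{Q_{\kappa r}}$ (so that $\|D^2u_1\|_{L_p}\le N\|P_0u\|_{L_p(Q_{\kappa r})}$) and $u_2=u-u_1$ satisfies $P_0u_2=0$ in $Q_{\kappa r}$, estimate the oscillation of $D_i^2u_1$ crudely by its $L_p$ norm and the oscillation of $D_i^2u_2$ by the display above --- one arrives at the frozen-coefficient estimate
$$
\sum_{i=3}^d\big(|D_i^2u-(D_i^2u)_{Q_r}|^p\big)_{Q_r}\le N\kappa^{d+2}\big(|P_0u|^p\big)_{Q_{\kappa r}}+N\kappa^{-p\alpha}\sum_{i=3}^d\big(|D_i^2u|^p\big)_{Q_{\kappa r}},
$$
valid for any operator $P_0$ of the type in Theorem \ref{thm5.11}.

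The remaining step is the perturbation off the frozen coefficients, carried out exactly as in the proof of Theorem \ref{thm4.1}. Put $Q=Q_{\kappa r}(t_0,x_0)$ if $\kappa r<R_0$ and $Q=Q_{R_0}$ otherwise, and replace the $a^{ij}$ by the averages occurring in Assumption \ref{assump7b}: $\hat a^{11}(x^1)$ for $(i,j)=(1,1)$ and $\bar a^{ij}(t,x^1,\ldots,x^{\tilde\pi_{ij}})$ otherwise. These averaged coefficients still depend only on $(t,x^1,x^2)$, so $-\partial_t+\bar a^{ij}D_{ij}$ is again of the type covered by Theorem \ref{thm5.11}, and the frozen-coefficient estimate above applies with $\hat f:=-u_t+\bar a^{ij}D_{ij}u$ in place of $P_0u$. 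Writing $\hat f=Pu+(\bar a^{ij}-a^{ij})D_{ij}u$ (recall $b^i=c=0$), using that $u$ vanishes outside $Q_{R_0}$, and applying H\"older's inequality with exponents $\sigma$ and $\tau$, one gets
$$
\int_{Q_{\kappa r}(t_0,x_0)}|\hat f|^p\le N\int_{Q_{\kappa r}(t_0,x_0)}|Pu|^p+N\Big(\sum_{i,j}\int_Q|\bar a^{ij}-a^{ij}|^{p\sigma}\Big)^{1/\sigma}\Big(\int_{Q_{\kappa r}(t_0,x_0)}|D^2u|^{p\tau}\Big)^{1/\tau},
$$
and the boundedness of the coefficients together with $a^{11,\#}_{R_0,2}+a^\#_{R_0}\le\gamma$ yields $\int_Q|\bar a^{ij}-a^{ij}|^{p\sigma}\le N\gamma|Q|\le N(\kappa r)^{d+2}\gamma$. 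Substituting these into the frozen-coefficient estimate produces the asserted inequality.

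The main obstacle, such as it is, lies in the first step and is conceptual rather than computational: one must verify that the ``simple'' structure of the coefficients survives both the freezing (so that Theorem \ref{thm5.11} can be invoked for $-\partial_t+\bar a^{ij}D_{ij}$) and differentiation in a free variable (so that $D_i^2w$ solves the same homogeneous equation as $w$ and the Krylov--Safonov estimate is legitimate). Both are immediate once one observes that here every relevant coefficient depends only on $(t,x^1,x^2)$; this is exactly why the left-hand side carries only $\sum_{i=3}^dD_i^2u$ rather than the full $D_{x'}^2u$ of Theorems \ref{thm12.02} and \ref{thm4.1}. The tracking of the exponents $\kappa^{d+2}$, $\kappa^{-p\alpha}$ and $\gamma^{1/\sigma}$ then proceeds identically to the proof of Theorem \ref{thm4.1} and requires no new ideas.
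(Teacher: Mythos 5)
Your proposal is correct and follows exactly the route the paper intends: it repeats the argument of Theorem \ref{thm4.1} (freeze the coefficients via the averages in Assumption \ref{assump7b}, apply the frozen-coefficient oscillation estimate built from the solvability of Theorem \ref{thm5.11} together with the Krylov--Safonov estimate for $D_i^2u$, $i\ge3$, which solves the same homogeneous equation since the simple coefficients depend only on $(t,x^1,x^2)$, then perturb with H\"older's inequality and the smallness of $a^{11,\#}_{R_0,2}+a^{\#}_{R_0}$). The paper itself gives no more detail than ``similar to Theorem \ref{thm4.1} with Theorem \ref{thm5.11} in place of Theorem \ref{thm2.2},'' so your write-up is a faithful (indeed more explicit) version of the same proof.
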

\begin{proof}
The proof is similar to that of Theorem \ref{thm4.1} by using Theorem \ref{thm5.11} instead of Theorem \ref{thm2.2}. We omit the details.
\end{proof}

\begin{lemma}
 						\label{lem5.13}
Assume $b^i=c=0$. For any
$p\in (1,\infty)$ there exists a $\gamma_2$, $\mu_2$ and $N$, depending only on $d$, $\delta$ and $p$, such that under Assumption \ref{assump7b} ($\gamma_2$) with $\tilde \pi_{ij}$ in place of $\pi_{ij}$, for any $u\in C_0^\infty(Q_{\mu_2^{-1}R_0})$ we have
\begin{equation}
                \label{eq7.34}
\|D^2 u\|_{L_{p}}+\|u_{t}\|_{L_{p}}\le N\|Pu\|_{L_{p}}+N\sum_{i=3}^d\|D_{i}^2 u\|_{L_{p}}.
\end{equation}
\end{lemma}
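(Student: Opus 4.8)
Here is how I would attack the estimate \eqref{eq7.34}. The plan is to derive it from the already established solvability theorem for \emph{non}-hierarchically partially BMO coefficients, Theorem~\ref{thm3}, applied to an auxiliary operator in which the ``deepest'' coefficients $a^{ij}$ with $\max(i,j)\ge 3$ (equivalently, those with $\tilde\pi_{ij}=2$) have been replaced by the coefficients of the Laplacian. Since $P$ has no lower order terms, \eqref{eq7.34} is invariant under the parabolic dilation $u(t,x)\mapsto u(R_0^2t,R_0x)$, which carries Assumption~\ref{assump7b}~($\gamma_2$) with radius $R_0$ to the same assumption with radius $1$; so the first move is to reduce to $R_0=1$, keeping $\mu_2$ and $\gamma_2$ independent of $R_0$.

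Next, set $\mathfrak{P} u=-u_t+\sum_{i,j=1}^{2}a^{ij}D_{ij}u+\sum_{k=3}^{d}D_k^2 u$. This operator is uniformly elliptic with the same $\delta$ (a principal $2\times2$ submatrix of an elliptic matrix, block-diagonal with the identity), and its leading coefficients — $a^{11}$ measurable in $x^1$, the $a^{ij}$ with $i,j\le2$, $ij>1$, measurable in $(t,x^1)$, and constants in the remaining directions — satisfy Assumption~\ref{assump2}~($\gamma_2$) ii) and iii) once $\gamma_2$ is taken below the threshold $\gamma(d,\delta,p)$ of Theorem~\ref{thm3}. Applying part i) of that theorem to $\mathfrak{P}$ with $\lambda=\lambda_0$ (the constant it supplies for $\mathfrak{P}$, depending only on $d,\delta,p$ since $R_0=1$ and $\mathfrak{P}$ has no lower order terms) gives
$$
\lambda_0\|u\|_{L_p}+\|D^2u\|_{L_p}+\|u_t\|_{L_p}\le N\|\mathfrak{P} u\|_{L_p}+N\lambda_0\|u\|_{L_p},\qquad N=N(d,\delta,p).
$$
Because $u\in C_0^\infty(Q_{\mu_2^{-1}})$, Poincaré's inequality applied twice in the $x^1$ variable yields $\|u\|_{L_p}\le N\mu_2^{-2}\|D^2u\|_{L_p}$, so choosing $\mu_2$ large (depending only on $d,\delta,p$) absorbs $N\lambda_0\|u\|$ into the left side and leaves $\|D^2u\|_{L_p}+\|u_t\|_{L_p}\le N\|\mathfrak{P} u\|_{L_p}$.

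It then remains to bound $\mathfrak{P} u$. From $\mathfrak{P} u=Pu-\sum_{\max(i,j)\ge3}a^{ij}D_{ij}u+\sum_{k\ge3}D_k^2u$ one gets $\|\mathfrak{P} u\|_{L_p}\le\|Pu\|_{L_p}+N\sum_{\max(i,j)\ge3}\|D_{ij}u\|_{L_p}$; in this last sum the diagonal terms $D_{kk}u$, $k\ge3$, are already of the form allowed on the right of \eqref{eq7.34}, while each off-diagonal $D_{ij}u$ with $\max(i,j)\ge3$, $i\ne j$, is dominated, by the one-variable scaling behind \eqref{3.57}, by $\varepsilon\|D^2u\|_{L_p}+N_\varepsilon\sum_{k\ge3}\|D_k^2u\|_{L_p}$. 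Inserting this, choosing $\varepsilon$ small to absorb $\varepsilon\|D^2u\|_{L_p}$, and then undoing the dilation of the first step gives \eqref{eq7.34}.

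The main obstacle is exactly the choice made at the first step, and with it the lower order term $\lambda_0\|u\|$. The approach one tries first, in analogy with Lemma~\ref{lem5.2} — differentiate the equation in $x^1$ and invoke Proposition~\ref{lemA2} — only produces $\|D^2u\|_{L_p}+\|u_t\|_{L_p}\le N\|Pu\|_{L_p}+N\|D_{x'}^2u\|_{L_p}$, and the term $\|D_{22}u\|_{L_p}$ hidden in $\|D_{x'}^2u\|_{L_p}$ cannot be removed by a rescaling of the kind used in Corollary~\ref{cor5.5}, because $x^2$ is a genuinely elliptic direction. It is precisely the \emph{full} a priori bound of Theorem~\ref{thm3} for $\mathfrak{P}$ that captures $\|D_{22}u\|_{L_p}$, at the cost of the term $\lambda_0\|u\|$; that cost is what forces the preliminary normalization to $R_0=1$ and the large choice of $\mu_2$, and verifying this absorption carefully is the one genuinely technical point.
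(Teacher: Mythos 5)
Your argument is correct, and its skeleton coincides with the paper's: both proofs freeze the coefficients with $\max(i,j)\ge 3$ to the Laplacian, work with the auxiliary operator $\mathfrak P u=-u_t+\sum_{i,j\le 2}a^{ij}D_{ij}u+\sum_{k\ge 3}D_k^2u$, whose coefficients satisfy Assumption \ref{assump2} ($\gamma_2$) ii) and iii), and then dispose of the mixed derivatives $D_{ij}u$ with $\max(i,j)\ge 3$ by the one-variable scaling interpolation behind \eqref{3.57}. Where you differ is in how the intermediate bound $\|u_t\|_{L_p}+\|D^2u\|_{L_p}\le N\|\mathfrak P u\|_{L_p}$ is produced. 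The paper does not quote Theorem \ref{thm3} as a black box; it reapplies the localized ingredients of its proof, namely Theorem \ref{thm5.2} and Lemma \ref{lem5.2} combined through the Fefferman--Stein and maximal function argument as in Lemma \ref{lem4.2}, directly to $\mathfrak P$. For $u\in C_0^\infty(Q_{\mu_2^{-1}R_0})$ this gives a $\lambda$-free estimate at once, with $\mu_2$ inherited from Lemma \ref{lem5.2} (ultimately Proposition \ref{lemA2}). You instead invoke the finished statement of Theorem \ref{thm3} with $\lambda=\lambda_0>0$, which forces two extra moves: the parabolic dilation normalizing $R_0=1$ (legitimate, since with $b^i=c=0$ both \eqref{eq7.34} and Assumption \ref{assump7b} are invariant under it), so that $\lambda_0=\lambda_0(d,\delta,p)$, and the Poincar\'e absorption $\|u\|_{L_p}\le N\mu_2^{-2}\|D^2u\|_{L_p}$ on the small support to remove $N\lambda_0\|u\|_{L_p}$ --- which is where your $\mu_2$ comes from. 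Both routes work and give constants of the right dependence; yours buys the convenience of using Theorem \ref{thm3} as stated at the cost of the scaling and absorption bookkeeping, while the paper's avoids the zeroth-order term entirely because its localized estimates are already free of $\lambda$ for compactly supported $u$.
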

\begin{proof}
Set $f=Pu$. Note that $u$ satisfies
\begin{equation*}
-u_t+\sum_{i,j=1}^2 a^{ij}D_{ij}u+\sum_{i=3}^dD^2_{i}u
=f+\sum_{\max(i,j)>2} (\delta_{ij}-a^{ij})D_{ij}u.
\end{equation*}
The coefficients on the left-hand side above satisfy Assumption \ref{assump2} ($\gamma_2$) ii) and iii).
Thus by Theorem \ref{thm5.2} and Lemma \ref{lem5.2}, for $\gamma_2$ sufficiently small and $\mu_2$ sufficiently large depending only on $d$, $\delta$ and $p$, we have
$$
\|u_t\|_{L_p}+\|D^2 u\|_{L_p}\le N\|f\|_{L_p}+N\sum_{\max(i,j)>2}\|D_{ij}u\|_{L_p}.
$$
By an inequality similar to \eqref{3.57} we reach \eqref{eq7.34}. The lemma is proved.
\end{proof}

\begin{proof}[Proof of Theorem \ref{thm5.10}]
Assume for the moment that in Assumption \ref{assump7b} $\pi_{ij}$ is replace by $\tilde \pi_{ij}$. We first establish a counterpart of Lemma \ref{lem4.2} by using Theorem \ref{thm5.12} and Lemma \ref{lem5.13}. After that we prove the theorem by arguing as in the proof of Theorem \ref{thm2}. The general case follows by an induction. The theorem is proved.
\end{proof}

\appendix

\section{Estimates for divergence form equations}

For reader's convenience, in the Appendix section, we recall a few results obtained in \cite{Dong08} and \cite{DongKim08b} for divergence form parabolic equations and systems. These results have been used in this paper.

First, consider parabolic equations in divergence form:
\begin{equation}
                                                \label{divpara}
-u_t+D_j(a^{ij}D_iu)-\lambda u=\Div g +f,
\end{equation}
where $\lambda\ge 0$ is a constant and $g=(g^1,g^2,\ldots,g^d)$.
We assume the same boundedness and ellipticity condition on $a^{ij}$ as in the introduction.

\begin{proposition}[Corollary 5.5 of \cite{Dong08}]
                                                \label{lemA1}
Let $p\in (1,\infty)$, $T \in (-\infty, \infty]$,  and $f,g\in  L_p(\bR^{d+1}_T)$. Assume that $a^{11}=a^{11}(x^1)$ and $a^{ij}=a^{ij}(t,x^1)$ for $ij>1$. Then there exists a constant $N>0$, depending only
on $d$, $p$ and $\delta$,
such that for any $ u \in \cH_p^{1}(\bR^{d+1}_T)$ satisfying \eqref{divpara} we have
\begin{equation*}
\lambda \|  u \|_{L_p(\bR^{d+1}_T)}
+ \sqrt\lambda\|Du \|_{L_p(\bR^{d+1}_T)}
\le N\sqrt\lambda\|g\|_{L_p(\bR^{d+1}_T)}+N\|f\|_{L_p(\bR^{d+1}_T)},
\end{equation*}
provided that $\lambda \ge 0$. In particular, when $\lambda=0$ and $f\equiv 0$, we have
\begin{equation*}
\|Du \|_{L_p(\bR^{d+1}_T)}
\le N\|g\|_{L_p(\bR^{d+1}_T)}.
\end{equation*}
\end{proposition}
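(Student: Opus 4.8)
The plan is to follow the now-standard route for divergence form parabolic equations whose coefficients are measurable in a distinguished set of variables, combined with the ``breaking the symmetry'' idea. Since the inequality is homogeneous, a routine density argument reduces everything to proving the a priori estimate for $u\in C_0^\infty(\bR^{d+1}_T)$. The case $p=2$ is classical: testing \eqref{divpara} with $u$ and integrating by parts gives
\[
\lambda\|u\|_{L_2}^2+\delta\|Du\|_{L_2}^2\le \|g\|_{L_2}\|Du\|_{L_2}+\|f\|_{L_2}\|u\|_{L_2},
\]
and Young's inequality, keeping track of the powers of $\lambda$, yields the asserted bound with the correct $\sqrt\lambda$-scaling; in particular, when $\lambda=0$ and $f\equiv 0$ one gets $\|Du\|_{L_2}\le\delta^{-1}\|g\|_{L_2}$ at once.

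For $p>2$ the heart of the matter is a mean oscillation (sharp function) estimate for $Du$. Because $a^{11}=a^{11}(x^1)$ and $a^{ij}=a^{ij}(t,x^1)$, $ij>1$, are all independent of $x'=(x^2,\dots,x^d)$, each tangential derivative $D_ku$, $k\ge 2$, satisfies a divergence form equation of the same type with right-hand side $\Div(D_kg)+D_kf$; for a local homogeneous solution (restrict attention to a cylinder $Q_{\kappa r}$ on which $g\equiv f\equiv 0$) this, together with the De Giorgi--Moser--Nash H\"older estimate for divergence form parabolic equations applied iteratively, produces interior H\"older control of $D_{x'}u$ with a gain $\kappa^{-\beta}$ measured against $(|D_{x'}u|^2)_{Q_{\kappa r}}^{1/2}$. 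The normal derivative $D_1u$ is then recovered from the equation itself, rewritten as $D_1(a^{11}D_1u)=u_t-\sum_{(i,j)\ne(1,1)}D_j(a^{ij}D_iu)$ and integrated in $x^1$, using that $a^{11}$ depends on $x^1$ alone. Splitting $u$ on $Q_{\kappa r}$ into such a homogeneous part plus a remainder estimated through the $p=2$ solvability applied to the data $g,f$, one obtains a pointwise bound of $(Du)^\#$ by
\[
N\kappa^{-\beta}\bigl(\bM(|Du|^2)\bigr)^{1/2}+N\kappa^{(d+2)/2}\bigl(\bM(|g|^2)\bigr)^{1/2}+N\kappa^{(d+2)/2}\bigl(\bM(|f|^2)\bigr)^{1/2},
\]
with the $f$-term carrying an extra power of the radius as required by scaling. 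The Fefferman--Stein theorem on sharp functions and the Hardy--Littlewood maximal function theorem then upgrade this to the $L_p$ estimate for every $p\ge 2$ once $\kappa$ is chosen large, and the $\lambda$-dependence is restored by the dilation $u(t,x)\mapsto u(t/\lambda,x/\sqrt\lambda)$.

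Finally, the range $1<p<2$ is handled by duality: the formal adjoint of the operator $u\mapsto -u_t+D_j(a^{ij}D_iu)-\lambda u$ is the backward-in-time divergence form operator $v\mapsto v_t+D_i(a^{ij}D_jv)-\lambda v$, whose leading coefficients $a^{ji}$ again satisfy the hypotheses ($a^{11}$ remains $t$-independent, the others remain measurable only in $(t,x^1)$, and the time direction is reversible), so pairing $u$ against solutions of the adjoint problem and invoking the estimate already proved for the conjugate exponent $p'>2$ gives the case $p<2$; existence in $\cH^1_p$, when needed, follows from the a priori estimate and the $p=2$ solvability by the method of continuity. I expect the main obstacle to be the mean oscillation estimate for $p>2$ --- specifically, establishing the interior regularity of $D_{x'}u$ for homogeneous solutions and recovering $D_1u$ using only the $(t,x^1)$-dependence of the coefficients, while bookkeeping the $g$- and $f$-contributions precisely enough that the scaling $\lambda\|u\|_{L_p}+\sqrt\lambda\|Du\|_{L_p}\le N\sqrt\lambda\|g\|_{L_p}+N\|f\|_{L_p}$ emerges.
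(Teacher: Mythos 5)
This proposition is not proved in the paper at all: it is quoted verbatim as Corollary 5.5 of \cite{Dong08}, so the only ``proof'' here is a citation, and your sketch is in effect an outline of that reference's argument. The overall architecture you propose (energy estimate for $p=2$, mean oscillation estimates plus Fefferman--Stein and Hardy--Littlewood for $p>2$, duality for $1<p<2$, method of continuity for existence) is indeed the strategy used there. However, there is a genuine gap in the central step as you state it: you claim a pointwise bound of $(Du)^{\#}$, i.e.\ of the sharp function of the \emph{full} gradient, with a decaying factor $\kappa^{-\beta}$ in front of the maximal-function term. This cannot hold. Since $a^{11}=a^{11}(x^1)$ is merely measurable, a homogeneous solution can be a function of $x^1$ alone with $a^{11}(x^1)D_1u=\mathrm{const}$, so that $D_1u=c/a^{11}(x^1)$ oscillates at every scale; its mean oscillation over small cylinders stays bounded below while the right-hand side of your claimed inequality tends to zero as $\kappa\to\infty$. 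In other words, interior regularity of homogeneous solutions is simply false for $D_1u$, and ``recovering $D_1u$ by integrating the equation in $x^1$'' yields $L_p$ (pointwise) control but no oscillation decay, which is exactly what the Fefferman--Stein step needs.

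The repair, which is what \cite{Dong08} actually does, is to run the sharp-function argument not on $Du$ but on the pair consisting of the tangential gradient $D_{x'}u$ and the conormal flux $U=\sum_i a^{i1}D_iu$: for the homogeneous problem with coefficients measurable in $(t,x^1)$ these two quantities (unlike $D_1u$) do satisfy interior H\"older estimates, so one obtains
$(D_{x'}u)^{\#}+U^{\#}\le N\kappa^{-\beta}\bigl(\bM(|Du|^2)\bigr)^{1/2}+N\kappa^{(d+2)/2}\bigl(\bM(|g|^2+|f|^2)\bigr)^{1/2}$,
applies Fefferman--Stein and Hardy--Littlewood to $D_{x'}u$ and $U$, and only at the very end recovers $\|D_1u\|_{L_p}$ algebraically from $D_1u=(a^{11})^{-1}\bigl(U-\sum_{i\ge2}a^{i1}D_iu\bigr)$ using $a^{11}\ge\delta$. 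With that substitution (and the routine bookkeeping of the $\lambda$-scaling and the duality step, which are fine as you describe them) your outline matches the cited proof; without it, the $p>2$ step as written fails.
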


The following result follows directly from the proof of Theorem 2.5 \cite{Dong08}.

\begin{proposition}
                                            \label{lemA2}
Let $p\in (1,\infty)$, $\lambda=0$, $f\equiv 0$ and $g\in L_p$.
Then there exist constants $\gamma_1$, $N$ and $\mu_1>0$ depending only
on $d$, $p$, and $\delta$,
such that under Assumption \ref{assump2} ($\gamma_1$) ii) and iii) the following holds.
For any $ u \in C_0^\infty$ vanishing outside $Q_{\mu^{-1}_1 R_0}$ and satisfying \eqref{divpara} in $\bR^{d+1}$, we have
$$
\|Du \|_{L_p}\le N\|g \|_{L_p}.
$$
\end{proposition}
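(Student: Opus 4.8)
The plan is to treat the $x'$-dependence of the $a^{ij}$ as a small perturbation of the ``simple'' coefficients covered by Proposition~\ref{lemA1}, and to close the $L_p$ bound on $Du$ by a sharp function argument combined with the Fefferman--Stein and Hardy--Littlewood theorems, in the spirit of the proofs of Theorems~\ref{thm4.1}, \ref{thm5.7} and \ref{thm5.1}. The case $p=2$ is immediate and uses neither the small support nor Assumption~\ref{assump2}: since $\lambda=0$, $f\equiv0$ and $u\in C_0^\infty$, multiplying \eqref{divpara} by $u$ and integrating over $\bR^{d+1}$ annihilates the $u_t$ term and leaves $\delta\|Du\|_{L_2}^2\le\int g\cdot Du$, so $\|Du\|_{L_2}\le\delta^{-1}\|g\|_{L_2}$.

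For $p>2$, fix a parabolic cube $Q_{\kappa r}(t_0,x_0)$ and freeze the bad directions, replacing $a^{ij}$ by its partial average $\bar a^{ij}$ --- a function of $x^1$ when $ij=(1,1)$, of $(t,x^1)$ otherwise --- which is a simple coefficient in the sense of Proposition~\ref{lemA1}. Then $u$ solves $-u_t+D_j(\bar a^{ij}D_iu)=\Div(g+h)$ with $h^j=\sum_i(\bar a^{ij}-a^{ij})D_iu$, and combining Proposition~\ref{lemA1} for the frozen equation with the interior De~Giorgi--Nash--Moser estimate for the homogeneous simple-coefficient equation (the derivatives $D_ku$, $k\ge2$, solve that equation and are hence H\"older continuous) produces, for exponents $\sigma,\tau\in(1,\infty)$ with $1/\sigma+1/\tau=1$, a mean-oscillation bound of the form
$$
\bigl(|D_{x'}u-(D_{x'}u)_{Q_r}|^2\bigr)_{Q_r}^{1/2}\le N\kappa^{-\alpha}\bigl(|Du|^2\bigr)_{Q_{\kappa r}}^{1/2}+N\kappa^{(d+2)/2}\bigl(|g|^2\bigr)_{Q_{\kappa r}}^{1/2}+N\kappa^{(d+2)/2}\gamma^{1/(2\sigma)}\bigl(|Du|^{2\tau}\bigr)_{Q_{\kappa r}}^{1/(2\tau)},
$$
where the small support of $u$ enters exactly as in the proof of Theorem~\ref{thm4.1}: the cube $Q$ on which $|\bar a^{ij}-a^{ij}|$ is measured is taken to be $Q_{\kappa r}(t_0,x_0)$ if $\kappa r<R_0$ and $Q_{R_0}$ otherwise, so that $\int_Q|\bar a^{ij}-a^{ij}|^{2\sigma}\,dx\,dt\le N\gamma|Q|\le N\gamma(\kappa r)^{d+2}$. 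Passing to maximal functions, choosing $\tau>1$ with $p>2\tau$, and applying the Fefferman--Stein theorem on sharp functions and the Hardy--Littlewood maximal function theorem gives $\|D_{x'}u\|_{L_p}\le N(\kappa^{-\alpha}+\kappa^{(d+2)/2}\gamma^{1/(2\sigma)})\|Du\|_{L_p}+N\kappa^{(d+2)/2}\|g\|_{L_p}$. A companion bad-direction estimate $\|Du\|_{L_p}\le N\|g\|_{L_p}+N\|D_{x'}u\|_{L_p}$, obtained by a rescaling in $x^1$ and $t$ that reduces matters to Proposition~\ref{lemA1} (this is where the constant $\mu_1$ is produced, as in the proof of Corollary~\ref{cor5.5}), then yields $\|Du\|_{L_p}\le N(\kappa^{-\alpha}+\kappa^{(d+2)/2}\gamma^{1/(2\sigma)})\|Du\|_{L_p}+N\kappa^{(d+2)/2}\|g\|_{L_p}$; fixing $\kappa$ large and then $\gamma_1$ small (both depending only on $d,p,\delta$) so that the bracket is $\le1/2$, and absorbing $\|Du\|_{L_p}$, which is finite since $u\in C_0^\infty$, finishes this case.

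For $1<p<2$, use the standard duality argument for divergence-form equations: the formal adjoint of $-\partial_t+D_j(a^{ij}D_i\,\cdot\,)$ (transpose the coefficient matrix and reverse time) is again a divergence-form parabolic operator whose leading coefficients satisfy Assumption~\ref{assump2}~ii) and iii) with the same $\gamma_1$, so the case $p>2$ already proved applies to it with the conjugate exponent $p'$, and the usual pairing gives $\|Du\|_{L_p}\le N\|g\|_{L_p}$. I expect the main obstacle to be the middle step --- converting the merely integral (small BMO) smallness of $a^{ij}-\bar a^{ij}$ into a genuinely small multiplicative factor in front of $\|Du\|_{L_p}$; this is done via H\"older's inequality together with boundedness of the coefficients (so that $|\bar a^{ij}-a^{ij}|^{2\sigma}$ has small mean over small cubes), at the price of the slightly supercritical power $|Du|^{2\tau}$ inside the maximal function, which is precisely the reason the direct argument requires $p>2$ and the range $p\le2$ must be obtained by duality.
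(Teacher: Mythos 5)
The paper does not actually reprove this proposition: it is quoted from the proof of Theorem 2.5 in \cite{Dong08}, and your overall architecture (energy estimate for $p=2$, coefficient freezing plus a sharp-function/Fefferman--Stein/Hardy--Littlewood argument for $p>2$, duality for $p<2$) is indeed the architecture of that proof. However, as written your argument has a genuine gap at the ``companion bad-direction estimate'' $\|Du\|_{L_p}\le N\|g\|_{L_p}+N\|D_{x'}u\|_{L_p}$. You claim it follows by rescaling $(t,x^1)$ and invoking Proposition \ref{lemA1}, but after the anisotropic rescaling the coefficient $a^{11}$ is still only measurable in $x^1$ and small-BMO in $(t,x')$ --- it is not a function of $x^1$ alone --- so the comparison operator $-\partial_t+D_1(a^{11}D_1\cdot)+\Delta_{d-1}$ is \emph{not} of the simple type covered by Proposition \ref{lemA1}; it is a special case of the very proposition you are proving. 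You cannot dump $(a^{11}-\hat a^{11}(x^1))D_1u$ into the right-hand side of the rescaling lemma either, since small mean oscillation gives no small multiplicative factor in $L_p$ (as you yourself observe at the end). Closing this step requires handling $D_1u$ against the BMO part of $a^{11}$ directly, e.g.\ by also estimating mean oscillations of the flux $a^{11}D_1u$ (or the full gradient) for the frozen operator and recovering $D_1u$ from it --- this is precisely the delicate point in \cite{Dong08}, and it is missing from your proposal, whose oscillation estimate only concerns $D_{x'}u$. (This is the same reason the paper's Corollary \ref{cor5.5} can afford the rescaling trick: there it reduces to Proposition \ref{lemA2} itself, a genuinely lower-level result, not to Proposition \ref{lemA1}.)

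The duality step for $1<p<2$ is also too quick as stated: the estimate you have proved for $p'>2$ is an a priori bound for functions compactly supported in a small cylinder, and the solution of the adjoint problem in the pairing has no such support, so ``the case $p>2$ applies to it'' is not literally available. The standard repair is to first pass from the local $p'>2$ estimate to global solvability with $\lambda\ge\lambda_0\sim R_0^{-2}$ (partition of unity, Agmon's idea, method of continuity), solve the adjoint equation with that $\lambda$, and then in the pairing control the extra term $\lambda\int uv$ using the small support of $u$ (Poincar\'e in $x$ gives $\|u\|_{L_p}\le N\mu_1^{-1}R_0\|Du\|_{L_p}$, and $\sqrt{\lambda_0}\,R_0\le N$), absorbing it by choosing $\mu_1$ large; this is also part of where the constant $\mu_1$ in the statement is used. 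With these two repairs your outline matches the cited proof, but as written both steps would fail.
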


Next, we consider parabolic systems in the form
\begin{equation}
                        \label{eq15.39}
-\vu_t + D_\alpha(A^{\alpha \beta}D_{\beta}\vu)-\lambda \vu=\Div \vg,
\end{equation}
where $\vu = (u^1, \ldots, u^m)^T$,
$$
\vg_{\alpha} = (g_{\alpha}^1, \ldots, g_{\alpha}^m)^{\text{tr}},
\quad \alpha = 1, \ldots, d,
$$
and $A^{\alpha \beta}=[A^{\alpha\beta}_{ij}]_{i,j=1}^d$ satisfies \eqref{eq15.41}. Moreover, we suppose
$$
A^{11}=A^{11}(t),\quad
A^{\alpha\beta}=A^{\alpha\beta}(t,x^1),\,\,\alpha\beta>1.
$$

The proposition below is a special case of Theorem 5.1 \cite{DongKim08b}.

\begin{proposition}
                                \label{lemA3}
Let $p\in (1,\infty)$, $\lambda\in (0,\infty)$, $T\in (-\infty,\infty]$ and $\vg \in L_p$. Suppose that
$\vu \in \cH_p^1(\bR^{d+1}_{T})$ satisfies \eqref{eq15.39} in $\bR^{d+1}_T$.
Then,
\begin{equation*}
\|D \vu\|_{L_p(\bR^{d+1}_T)}
+\sqrt{\lambda} \|\vu\|_{L_p(\bR^{d+1}_T)}
\leq N\|\vg\|_{L_p(\bR^{d+1}_T)},
\end{equation*}
where $N=N(d,m,\delta,p)$.
\end{proposition}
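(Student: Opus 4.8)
The plan is to reprove, in this special configuration, the estimate behind Theorem 5.1 of \cite{DongKim08b}. First I would reduce to $\lambda=1$ by the parabolic rescaling $(t,x)\mapsto(\lambda t,\sqrt\lambda\,x)$, which preserves both the ellipticity \eqref{eq15.41} and the structural conditions $A^{11}=A^{11}(t)$, $A^{\alpha\beta}=A^{\alpha\beta}(t,x^1)$ for $\alpha\beta>1$ (and returns the weight $\sqrt\lambda$ upon undoing the scaling), and reduce to $T=\infty$ by the time-restriction device used in the proof of Theorem \ref{thm4.2}; it then suffices to prove the a priori estimate for $\vu\in C_0^\infty$, existence following from the method of continuity along $(1-s)\delta^{\alpha\beta}I+sA^{\alpha\beta}$, $s\in[0,1]$, which keeps \eqref{eq15.41} and the structure and reduces at $s=0$ to decoupled heat equations. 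The case $p=2$ is the classical energy estimate: testing \eqref{eq15.39} against $\vu$ and using \eqref{eq15.41} gives $\|D\vu\|_{L_2}^2+\lambda\|\vu\|_{L_2}^2\le N\|\vg\|_{L_2}\|D\vu\|_{L_2}$.

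For $p>2$ the plan is to establish a mean oscillation (sharp function) estimate for $D\vu$ and then conclude via the Fefferman--Stein theorem and the Hardy--Littlewood maximal function theorem, together with a partition of unity and Agmon's device (see \cite{Krylov_2005}), exactly as in Section \ref{sec3}. For a solution of the homogeneous system in $Q_{\kappa r}$, the tangential derivatives $D_{x'}\vu$ solve the same homogeneous system because the coefficients are independent of $x'=(x^2,\dots,x^d)$; iterating the Caccioppoli inequality on shrinking cylinders (an iteration of the type used in Lemma 4.2 of \cite{DongKim08b}) and then using Poincar\'e's inequality and parabolic Sobolev embedding, one obtains the interior oscillation decay
$$
\dashint_{Q_r}\big|D_{x'}\vu-(D_{x'}\vu)_{Q_r}\big|^2\,dx\,dt
\le N\kappa^{-2\alpha}\dashint_{Q_{\kappa r}}|D\vu|^2\,dx\,dt.
$$
The normal derivative $D_1\vu$ is then incorporated by returning to \eqref{eq15.39}: since $A^{11}$ depends only on $t$, it is uniformly invertible and can be moved outside an integration in $x^1$, which allows one to bound the oscillation of $D_1\vu$ on $Q_r$ by that of $D_{x'}\vu$ plus a multiple of $(|\vg|^2)_{Q_{\kappa r}}^{1/2}$; combined with the displayed bound and absorption after choosing $\kappa$ large, this produces the sharp function estimate for all of $D\vu$.

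The range $1<p<2$ then follows by a standard duality argument. After a reversal of time, the formal $L_2$-adjoint of $-\partial_t+D_\alpha(A^{\alpha\beta}D_\beta\cdot)$ has coefficients obtained by transposing the matrices and swapping $\alpha\leftrightarrow\beta$, which again satisfy \eqref{eq15.41} and retain the measurability pattern (the $(1,1)$-entry depending only on $t$, the others only on $(t,x^1)$), so the already proven case $p'=p/(p-1)>2$ applies to the adjoint problem; pairing $D\vu$ against test vector fields and using this gives $\|D\vu\|_{L_p}+\sqrt\lambda\|\vu\|_{L_p}\le N\|\vg\|_{L_p}$.

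I expect the main obstacle to be the mean oscillation estimate for $D_1\vu$. For systems the Krylov--Safonov interior H\"older estimate is unavailable, so the oscillation decay for $D_{x'}\vu$ must come from the Caccioppoli bootstrap --- which works only because the coefficients do not depend on $x'$ --- while $D_1\vu$ has to be reconstructed from the equation itself, and it is precisely here that the hypothesis $A^{11}=A^{11}(t)$ is used.
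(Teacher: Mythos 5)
The paper does not actually prove Proposition \ref{lemA3}; it imports it as a special case of Theorem 5.1 of \cite{DongKim08b}, so your attempt is measured against the method of that line of work, whose overall skeleton (energy estimate for $p=2$, tangential-derivative bootstrap as in Proposition \ref{lemA4}, Fefferman--Stein for $p>2$, duality for $1<p<2$) you reproduce correctly. The scaling reduction in $\lambda$, the treatment of $D_{x'}\vu$ (which does solve the same homogeneous system because the coefficients are independent of $x'$), and the duality step for the transposed system are all sound.

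The genuine gap is the step where you claim a decaying mean-oscillation estimate for $D_1\vu$, ``bounded by that of $D_{x'}\vu$ plus a multiple of $(|\vg|^2)_{Q_{\kappa r}}^{1/2}$,'' using that $A^{11}=A^{11}(t)$ is invertible. No such estimate can hold, because the obstruction does not come from $A^{11}$ but from $A^{1\beta},A^{\alpha\beta}$ with $\beta\ge 2$, which are merely measurable in $x^1$. Already for $m=1$, $d=2$, take $A^{11}=A^{22}=1$, $A^{21}=0$ and $A^{12}=A^{12}(x^1)$ bounded, measurable and small enough that \eqref{eq15.41} holds; then $u=x^2+v(x^1)$ with $v'(x^1)=c-A^{12}(x^1)$ solves the homogeneous equation ($\vg=0$, $\lambda=0$) since $D_1\bigl(A^{11}D_1u+A^{12}D_2u\bigr)=(v'+A^{12})'=0$, while $D_{x'}u\equiv 1$. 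Here $D_1u=c-A^{12}(x^1)$ has oscillation of order one on every cylinder, with zero right-hand side, so any estimate of the type you propose (and hence any sharp-function bound for the full gradient) fails; equivalently, $D_1\vu$ is in general not VMO for homogeneous systems in this class, so Fefferman--Stein cannot be applied to it. The device used in \cite{Dong08,DongKim08b} is precisely designed around this: one estimates the mean oscillations of $D_{x'}\vu$ and of the flux $U=A^{1\beta}D_\beta\vu$ (which satisfies $D_1U=\vu_t+\lambda\vu-\sum_{\alpha\ge2}D_\alpha(A^{\alpha\beta}D_\beta\vu)+\Div\vg$ and is H\"older for homogeneous solutions), applies the Fefferman--Stein and maximal function theorems to these two quantities only, and recovers $\|D_1\vu\|_{L_p}$ at the very end from the algebraic identity $D_1\vu=(A^{11})^{-1}\bigl(U-\sum_{\beta\ge2}A^{1\beta}D_\beta\vu\bigr)$, using only the boundedness and uniform invertibility of $A^{11}$ in $L_p$-norm, never an oscillation estimate for $D_1\vu$. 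Your proof needs to be restructured along these lines for the $p>2$ step; as written, the key interior estimate is false.
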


The following H\"older estimate is proved in \cite{DongKim08b} by using a bootstrap argument.

\begin{proposition}[Lemma 6.3 \cite{DongKim08b}]
                                \label{lemA4}
Let $p\in (1,\infty)$. Assume $\vu\in C_{\text{loc}}^{\infty}$
satisfies \eqref{eq15.39} in $Q_{2}$ with $\lambda=0$ and $\vg\equiv 0$.
Then for any $\gamma\in (0,1)$, we have
$$
\|\vu\|_{C^{\gamma/2,\gamma}(Q_1)}
+ \|D_{x'}\vu\|_{C^{\gamma/2,\gamma}(Q_1)}\le N\|\vu\|_{L_p(Q_{2})},
$$
where $N=N(d,m,\delta,\gamma,p)$.
\end{proposition}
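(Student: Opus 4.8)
The plan is to reduce the statement to the cleaner a priori estimate
\[
\|\vu\|_{C^{\gamma/2,\gamma}(Q_1)}\le N\|\vu\|_{L_2(Q_{3/2})}
\]
for every $\gamma\in(0,1)$ and every $\vu$ solving the \emph{homogeneous} system $-\vu_t+D_\alpha(A^{\alpha\beta}D_\beta\vu)=0$ in $Q_{3/2}$, and then to recover the full statement. The $D_{x'}\vu$ term comes for free: since the $A^{\alpha\beta}$ do not depend on $x'$, differentiating \eqref{eq15.39} in $x^k$ for $k\ge 2$ and using $D_kD_\alpha(A^{\alpha\beta}D_\beta\vu)=D_\alpha(A^{\alpha\beta}D_\beta D_k\vu)$ shows that each $D_2\vu,\dots,D_d\vu$ solves the same homogeneous system, so one applies the estimate to them on slightly smaller cylinders, after bounding their $L_2$--norms by $\|\vu\|_{L_2}$ on a larger cylinder through Caccioppoli. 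The exponent $p$ is then dealt with first: the case $p\ge 2$ is immediate by H\"older's inequality, and the case $1<p<2$ follows by the standard interpolation-and-absorption argument along $Q_1\subset Q_{3/2}\subset Q_2$, using $\|\vu\|_{L_2}\le\varepsilon\|\vu\|_{L_\infty}+N_\varepsilon\|\vu\|_{L_p}$ together with the sup-bound that the $p=2$ estimate itself provides. So it suffices to work with $p=2$ on the homogeneous system.

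For $p=2$ the basic tool is the Caccioppoli (energy) inequality, valid for merely bounded and elliptic coefficients: for $0<r<\rho\le 3/2$, $\|D\vu\|_{L_2(Q_r)}+\sup_t\|\vu(t,\cdot)\|_{L_2(B_r)}\le N(\rho-r)^{-1}\|\vu\|_{L_2(Q_\rho)}$. The structural input is that the tangential derivatives $D_{x'}^k\vu$ again solve the homogeneous system; iterating Caccioppoli on them gives, for all $k$ and all $r<3/2$, a bound of $\|D_{x'}^k\vu\|_{L_2(Q_r)}+\|D_{x'}^kD\vu\|_{L_2(Q_r)}$ and, via $(D_{x'}^k\vu)_t=D_\alpha(A^{\alpha\beta}D_\beta D_{x'}^k\vu)$, also of $\|(D_{x'}^k\vu)_t\|_{\bH^{-1}_2(Q_r)}$, by $N(k,r)\|\vu\|_{L_2(Q_{3/2})}$. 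Thus every $D_{x'}^k\vu$ lies in $\cH^1_2(Q_r)$ with norm controlled by $\|\vu\|_{L_2(Q_{3/2})}$.

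The remaining (and main) step is to turn this ``unlimited $x'$-regularity'' into genuine pointwise Hölder control in all variables, the obstruction being that the coefficients are too rough to differentiate in $t$ or $x^1$, so those directions must be handled through the equation. Two ingredients do it. First, the parabolic Sobolev embedding $\cH^1_2(Q_r)\hookrightarrow L_{q_0}(Q_r)$ with $q_0=2(d+2)/d>2$, applied to every $D_{x'}^k\vu$, upgrades the previous bounds from $L_2$ to $L_{q_0}$; combined with the anisotropic Sobolev embedding in the $x'$-variables this already gives $\vu\in L_{q_0}\big((t,x^1);C^m(B'_r)\big)$ for every $m$, with the corresponding control. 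Second, writing the system with its $x^1$-flux, $-\vu_t+D_1\Phi^1+\sum_{\alpha\ge2}D_\alpha\Phi^\alpha=0$ with $\Phi^1=A^{11}(t)D_1\vu+\sum_{\beta\ge2}A^{1\beta}(t,x^1)D_\beta\vu$, and using once more that the coefficients do not see $x'$, one checks that every tangential derivative of the fluxes $\Phi^\alpha$ is under control; freezing $x'$ one is left with a parabolic equation in the single spatial variable $x^1$ whose leading part $-\partial_t+D_1(A^{11}(t)D_1\cdot)$ is covered by the $L_p$-theory for ``simple'' operators (Proposition \ref{lemA3} with effective spatial dimension one). Feeding the $x'$-smooth, higher-integrable right-hand side into that theory raises the integrability exponent in the $(t,x^1)$-variables; iterating these two ingredients (invoking, if necessary, the self-improving higher integrability of $D\vu$ for systems) finitely many times — this is the bootstrap — reaches an exponent $q$ so large that the anisotropic Sobolev--Morrey embedding — many derivatives in $x'$, one in $x^1$, half a derivative in $t$ in the parabolic scale, all in $L_q$ — forces $\vu\in C^{\gamma/2,\gamma}(Q_1)$ for the prescribed $\gamma$, with the asserted bound. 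Applying the whole argument to $D_{x'}\vu$, as above, completes the proof.

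The principal obstacle is exactly this last step: since we deal with systems — so neither the maximum principle nor De Giorgi--Nash--Moser is available — and the coefficients are only bounded and measurable, the sole admissible differentiation of the equation is tangential, and one has to manufacture the missing $(t,x^1)$-regularity, and the correct integrability for \emph{every} $p\in(1,\infty)$, out of the energy estimates, the one-space-dimensional structure of the frozen operator, and careful bookkeeping in anisotropic Sobolev--Morrey spaces.
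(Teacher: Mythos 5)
This proposition is not proved in the paper at all: it is quoted from \cite{DongKim08b}, where it is obtained by a bootstrap on the integrability exponent, using interior $\cH^1_q$ estimates (for every $q\in(1,\infty)$) for systems whose coefficients depend only on $(t,x^1)$ --- i.e.\ localized versions of Proposition \ref{lemA3} --- together with the parabolic embedding of $\cH^1_q$ into $L_{q^*}$ and, once $q>d+2$, into $C^{\gamma/2,\gamma}$ with $\gamma=1-(d+2)/q$; the $D_{x'}\vu$ part follows because $D_{x'}\vu$ solves the same system. Your overall strategy (tangential differentiation, Caccioppoli, bootstrap, embeddings) is in the same spirit, and your reductions to $p=2$ and to $D_{x'}\vu$ are sound.

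However, the central step of your bootstrap has a genuine gap. In the frozen $(t,x^1)$-equation the right-hand side is $-\sum_{\alpha\ge2}D_\alpha\Phi^\alpha=-\sum_{\alpha\ge2}\sum_\beta A^{\alpha\beta}D_\alpha D_\beta\vu$, and the terms with $\beta=1$ contain the mixed derivatives $D_1D_\alpha\vu$, $\alpha\ge2$. At the stage where you invoke the one-dimensional theory these are controlled only in $L_2$ (Caccioppoli applied to $D_{x'}\vu$): the embedding $\cH^1_2\hookrightarrow L_{q_0}$ upgrades the functions $D_{x'}^k\vu$ themselves, not their gradients. So the asserted ``higher-integrable right-hand side'' is unjustified exactly for the part of the data carrying an $x^1$-derivative, which has the same integrability as the unknown $D_1\vu$ you are trying to improve; the claimed gain per iteration is therefore not established. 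Any rescue (e.g.\ via the embedding of $L_q$ into $\bH^{-1}_r$ in $1+1$ parabolic dimensions) is not set up, and would in addition require local, $\lambda=0$ estimates for the one-dimensional problem with non-divergence data $f$, whereas Proposition \ref{lemA3} as stated is a whole-space, $\lambda>0$ estimate with divergence-form data only. The fallback to ``self-improving higher integrability of $D\vu$'' is a Gehring-type gain from $L_2$ to $L_{2+\varepsilon}$ and cannot by itself produce the arbitrarily large exponents you need, since $\gamma\in(0,1)$ is arbitrary. The dimension-freezing detour is also unnecessary: once you know that every $D_{x'}^k\vu$ solves the same system, the standard (and cited) route is to apply the $d$-dimensional interior $\cH^1_q$ estimate to $\vu$ and $D_{x'}\vu$, raise $q$ step by step by the parabolic Sobolev embedding until $q>d+2$, and conclude with $\cH^1_q\hookrightarrow C^{\gamma/2,\gamma}$.
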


\section*{Acknowledgement}

The author is grateful to Nicolai V. Krylov and Doyoon Kim for very helpful
discussions and comments during the preparation of this paper.
He also would like to thank the referee and the associate editor
for their helpful comments on an earlier version of the paper.


\end{document}